\newtheorem{theorem}{Theorem}[section]
\newtheorem{proposition}[theorem]{Proposition}
\newtheorem{lemma}[theorem]{Lemma}
\newtheorem{corollary}[theorem]{Corollary}
\newtheorem{conjecture}[theorem]{Conjecture}
\theoremstyle{definition}
\newtheorem{remark}[theorem]{Remark}
\newtheorem{example}[theorem]{Example}
\def\R{\mathbb{R}}
\def\Z{\mathbb{Z}}
\def\1{\mathbf{1}}
\def\<{\langle}
\def\>{\rangle}
\DeclareMathOperator{\codim}{codim}
\DeclareMathOperator{\ord}{ord}
\DeclareMathOperator{\Star}{Star}
\DeclareMathOperator{\spn}{span}
\DeclareMathOperator{\ev}{ev}
\DeclareMathOperator{\prim}{prim}
\DeclareMathOperator{\Char}{char}
\DeclareMathOperator{\sgn}{sgn}
\DeclareMathOperator{\rank}{rank}
\newcommand\nullset\varnothing
\begin{document}
	
	\title{Determinants of Hodge--Riemann forms}

		\author{Matt Larson, Isabella Novik and Alan Stapledon}      
	
	\address{Princeton University and the Institute for Advanced Study}
\email{mattlarson@princeton.edu}

\address{University of Washington}
\email{novik@uw.edu}

\address{Sydney Mathematics Research Institute}
\email{astapldn@gmail.com}

	\begin{abstract}
We calculate the determinant of the bilinear form in middle degree of the generic artinian reduction of the Stanley--Reisner ring of an odd-dimensional simplicial sphere. This 
proves the odd multiplicity conjecture of Papadakis and Petrotou and implies that this determinant is a complete invariant of the simplicial sphere. 
We extend this result to odd-dimensional connected oriented simplicial homology manifolds. In characteristic $2$, we prove a generalization to  
the Hodge--Riemann forms 
of any connected 
simplicial homology manifold.
To prove the latter theorem, we establish the strong Lefschetz property for certain quotients of the Stanley--Reisner rings of connected simplicial pseudomanifolds. 
	\end{abstract}
	
		\maketitle
	
	\vspace{-20 pt}
	
	\setcounter{tocdepth}{1}

	\section{Introduction}

This paper provides several far-reaching generalizations of the algebraic $g$-conjecture for oriented simplicial pseudomanifolds. Its proof was announced in \cite{APP}; see also \cite{Adiprasitog,KaruXiaoAnisotropy,PapadakisPetrotoug}. Our starting point is the odd-multiplicity conjecture of Papadakis and Petrotou \cite[Conjecture 5.4]{PapadakisPetrotouGenericAnisotropy1Spheres}. Below, after setting up the notation, we summarize our main results.

Let $\Delta$ be a simplicial complex  of dimension $d-1 > 0$ with vertex set $V = \{1, \dotsc, n\}$. Let $k$ be a field, and set $K = k(a_{i,j})_{1 \le i \le d, \, 1 \le j \le n}$ to be a purely transcendental field extension of $k$. We assume that $\Delta$ is a connected homology manifold over $k$, i.e., $\Delta$ is connected, and the link of every nonempty face $G$ of $\Delta$ has the same homology as a sphere of dimension $d - |G| - 1$ over $k$.  Let $K[\Delta]$ be the Stanley--Reisner ring of $\Delta$, and set $\theta_i = a_{i,1} x_1 + \dotsb + a_{i,n} x_n \in K[\Delta]$ for $i \in \{1, \dotsc, d\}$, so $\theta_1, \dotsc, \theta_d$ is a linear system of parameters for $K[\Delta]$. Let $H(\Delta) = K[\Delta]/(\theta_1, \dotsc, \theta_d)$ be the generic artinian reduction of $K[\Delta]$.

Assume that $\Delta$ is oriented. Then there is a distinguished isomorphism $\deg \colon H^{d}(\Delta) \to K$ \cite{BrionStructurePolytopeAlgebra}; see Section~\ref{sec:2}. Let $\overline{H}(\Delta)$ be the Gorenstein quotient of $H(\Delta)$, i.e., 
the quotient 
by the ideal
 $(y \in H(\Delta) : (y  z)_d = 0 \text{ for all }z \in H(\Delta))$,
where $w_d$ denotes the degree $d$ component of $w$ in $H(\Delta)$. 
If $\Delta$ is a homology sphere over $k$, i.e., a homology manifold with the same homology over $k$ as a sphere of dimension $d - 1$, then $\overline{H}(\Delta) = H(\Delta)$. 
By construction, $\overline{H}(\Delta)$ is an artinian Gorenstein ring: for each $q$, the bilinear form $\overline{H}^q(\Delta) \times \overline{H}^{d-q}(\Delta) \to K$ given by $(y, z) \mapsto \deg(y z)$ is nondegenerate.

Suppose that $d$ is even. Let $D_{d/2} \in K^{\times}/(K^{\times})^2$ be the determinant of the nondegenerate bilinear form on $\overline{H}^{d/2}(\Delta)$. That is, 
choose a basis $y_1, \dotsc, y_p$ for $\overline{H}^{d/2}(\Delta)$, and let $M$ be the symmetric matrix whose $(i, j)$th entry is $\deg(y_i  y_j)$. Then $D_{d/2}$ is the image of $\det M$ in $K^{\times}/(K^{\times})^2$; choosing a different basis for $\overline{H}^{d/2}(\Delta)$ only changes $\det M$ by a square, so $D_{d/2}$ is well-defined. 
For a subset $F = \{j_1 < \dotsb < j_{d}\}$ of $V$ of size $d$, set $[F]$ to be the determinant of the $d \times d$ matrix whose $(i, m)$th entry is $a_{i,j_m}$.

\begin{theorem}\label{thm:middledegree}
	Let $d$ be even, and let $\Delta$ be a connected oriented simplicial $k$-homology manifold of dimension $d-1$.
	Then
		$$D_{d/2} = \lambda \prod_{F \text{ facet of }\Delta} [F] \in K^{\times}/(K^{\times})^2$$
		for some $\lambda \in k^{\times}/(k^{\times})^2$.

\end{theorem}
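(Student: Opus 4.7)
The plan is to identify $D_{d/2}$, viewed as the class of a rational function in the $a_{i,j}$ modulo squares in $K^{\times}$, with the class of $\prod_F [F]$ up to an element of $k^{\times}/(k^{\times})^2$. This should follow from two complementary inputs: a divisibility argument showing each $[F]$ divides $D_{d/2}$ with odd multiplicity, together with a degree/denominator count ruling out any other irreducible factors (beyond squares and a scalar in $k^{\times}$). Since each $[F]$ is itself an irreducible polynomial of degree $d$ in the $a_{i,j}$, these two inputs determine $D_{d/2}$ modulo $(K^{\times})^2$.

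For the sphere case, which is the Papadakis--Petrotou odd multiplicity conjecture, I would use the Macaulay inverse system description of $H(\Delta) = \overline{H}(\Delta)$: the socle generator is dual to a ``volume polynomial'' of the form $\sum_F (\pm 1/[F])\, \Phi_F$, with one summand per facet $F$. The pairing on $H^{d/2}$ is then encoded in the coefficients of this sum, and the determinant of the pairing matrix in a squarefree-monomial basis indexed by $(d/2)$-faces expands naturally as an alternating sum of products of $1/[F]$'s. A Cauchy--Binet-style identity should show that $D_{d/2} \cdot \prod_F [F]$ is a square in $K^{\times}$, with the odd multiplicity encoded in the simple-pole behavior of the volume polynomial at each $[F] = 0$.

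To extend from spheres to connected oriented homology manifolds, I would pass to the Gorenstein quotient $\overline{H}(\Delta)$, whose pairing in the middle degree is nondegenerate by construction. Since the facets of $\Delta$ are unchanged in passing to the Gorenstein quotient, each facet $F$ still contributes a term $\pm \Phi_F/[F]$ to the corresponding volume polynomial, and the same inverse-system expansion applies. What remains is to check that the projection $H^{d/2} \twoheadrightarrow \overline{H}^{d/2}$ adjusts the determinant only by a square; I would derive this from the observation that the kernel reflects the non-sphere topology of $\Delta$ (essentially middle reduced homology), which is intrinsic to $\Delta$ and hence, in a suitable sense, $a_{i,j}$-independent up to squares.

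The main obstacle is establishing \emph{odd} multiplicity of $[F]$ in the sphere case. Divisibility by $[F]$ follows from a specialization argument—setting $[F] = 0$ degenerates the $d \times d$ minor for $F$ and collapses the pairing—but ruling out even multiplicities requires a fine local analysis showing that the pole of the volume polynomial along $[F] = 0$ is simple, equivalently that the residue of the pairing matrix there has rank exactly one. A secondary challenge in the manifold case is confirming that the kernel of the pairing on $H^{d/2}$ contributes only square factors to the determinant; I would approach this either through an explicit decomposition of the kernel (e.g., via Hochster's formula for the local cohomology of $K[\Delta]$) or by invoking a Lefschetz-type structure on the relevant quotient rings, as suggested by the abstract.
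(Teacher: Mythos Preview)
Your proposal is a plan rather than a proof, and several of its load-bearing steps are either missing or point in the wrong direction.

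First, two concrete errors. The squarefree monomials supported on $(d/2)$-faces do not form a basis of $H^{d/2}(\Delta)$: there are $f_{d/2-1}(\Delta)$ of them, whereas $\dim H^{d/2}(\Delta)=h_{d/2}(\Delta)$, so your ``Cauchy--Binet-style'' expansion is computing the determinant of the wrong matrix. Second, your divisibility heuristic for facets is inverted. For a facet $F$, setting $[F]=0$ does not ``collapse the pairing''; by Stanley's criterion the l.s.o.p.\ ceases to be one, and the entries of the pairing matrix (via $\deg(x_F)=\epsilon_F/[F]$) acquire \emph{poles}, not zeros, along $[F]=0$. So there is no specialization argument available at facets, and the residue/rank-one picture you sketch is neither established nor clearly tied to the parity of $\ord_{[F]}(\det M)$. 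Finally, the ``degree/denominator count'' meant to exclude all other irreducible factors is never made precise; the entries of $M$ are genuinely complicated elements of the localization at the $[F]$'s, and there is no evident a priori degree bound that forces $\det M\cdot\prod_F[F]$ to be a square times a constant.

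The paper's argument is structurally different and avoids all of these issues. For an irreducible $P$ which is not a scalar multiple of some $[F]$ with $F$ a facet, one finds a point on $V(P)$ giving an honest l.s.o.p.\ $\mu$; since we are in middle degree the Hodge--Riemann form for $\overline{H}_\mu(\Delta)$ is automatically nondegenerate (no Lefschetz input needed), so a monomial basis there lifts to a basis of $\overline{H}^{d/2}(\Delta)$ whose Gram determinant specializes to something nonzero, forcing $\ord_P(D_{d/2})=0$. For a facet $F$, one cannot specialize, so instead one performs a stellar subdivision at $F$ to obtain $\Delta'$ in which $F$ is a non-face. Choosing a basis of $\overline{H}^{d/2}(\Delta)$ supported away from $F$ and adjoining $x_{n+1}^{d/2}$ gives a basis of $\overline{H}^{d/2}(\Delta')$ for which the Gram matrix is block diagonal; the new diagonal entry is computed locally (it is the same as in the suspension $\Sigma$ of $\partial\sigma^{d-1}$) and has $\ord_{[F]}$ equal to $d-1$, which is odd. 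Since $\ord_{[F]}(D_{d/2}(\Delta'))$ is even by the non-facet case, $\ord_{[F]}(D_{d/2}(\Delta))$ is odd. The passage from spheres to homology manifolds requires no separate ``kernel contributes only squares'' argument: one works throughout with $\overline{H}(\Delta)$, and the only fact needed is that $\dim\overline{H}^{d/2}_\mu(\Delta)$ is independent of the l.s.o.p., which is the Novik--Swartz theorem.
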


Papadakis and Petrotou proved Theorem~\ref{thm:middledegree} 
for $1$-dimensional simplicial spheres 
\cite[Proposition 5.1]{PapadakisPetrotouGenericAnisotropy1Spheres}.

Let $F$ be a subset of $V$ of size $d$. 
As $[F]$ is an irreducible polynomial (see Lemma~\ref{lemma:irreducible}), it defines a valuation $\ord_{[F]} \colon K^{\times} \to \mathbb{Z}$ given by the order of vanishing along the hypersurface defined by $[F]$. This descends to a homomorphism $\ord_{[F]} \colon K^{\times}/(K^{\times})^2 \to \mathbb{Z}/2\mathbb{Z}$. 
We immediately deduce the following corollary of Theorem~\ref{thm:middledegree}. 
It implies that the determinant of the bilinear form on $\overline{H}^{d/2}(\Delta)$ is a complete invariant of the connected orientable simplicial $k$-homology manifold $\Delta$. 
\begin{corollary}\label{cor:ordervanishing}
		Let $d$ be even, and let $\Delta$ be a connected oriented simplicial $k$-homology manifold of dimension $d-1$
		 with vertex set $V$. 
 Let $F$ be a subset of $V$ of size $d$. Then
	$$\ord_{[F]}(D_{d/2}) = \begin{cases} 1 & \text{if }F \text{ is a facet of }\Delta \\ 0 & \text{otherwise}.\end{cases}$$
\end{corollary}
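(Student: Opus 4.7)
The plan is to apply Theorem~\ref{thm:middledegree} directly and then track the valuations $\ord_{[F]}$ term by term. By Theorem~\ref{thm:middledegree}, we may write
\[
D_{d/2} = \lambda \prod_{F' \text{ facet of }\Delta} [F'] \in K^{\times}/(K^{\times})^2
\]
for some $\lambda \in k^{\times}/(k^{\times})^2$. Since $\ord_{[F]} \colon K^{\times}/(K^{\times})^2 \to \Z/2\Z$ is a group homomorphism, applying it yields
\[
\ord_{[F]}(D_{d/2}) = \ord_{[F]}(\lambda) + \sum_{F' \text{ facet of }\Delta} \ord_{[F]}([F']) \pmod{2}.
\]
The task therefore reduces to evaluating each summand.

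Since $\lambda \in k^{\times}$, it is a unit in $k[a_{i,j}]$ and hence $\ord_{[F]}(\lambda) = 0$. For the remaining terms, I would invoke Lemma~\ref{lemma:irreducible}, which asserts that each $[F']$ is irreducible in $k[a_{i,j}]$. Consequently $\ord_{[F]}([F'])$ equals $1$ if the principal ideals $([F])$ and $([F'])$ coincide, and $0$ otherwise. Since both polynomials are irreducible, the ideals coincide exactly when $[F]$ and $[F']$ agree up to a scalar in $k^{\times}$.

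The one point worth verifying is that distinct subsets $F, F' \subseteq V$ of size $d$ produce non-associate polynomials. This follows by a variable-support argument: if $F \neq F'$, pick $j \in F \setminus F'$, and observe that $[F]$ involves the variables $a_{1,j}, \dotsc, a_{d,j}$ (since expanding the determinant along the column indexed by $j$ produces a nonzero polynomial in these variables) while $[F']$ does not involve them at all. Hence $[F]$ and $[F']$ cannot differ by a scalar. Combining, $\ord_{[F]}([F']) = \delta_{F,F'}$, and summing gives $\ord_{[F]}(D_{d/2}) = 1$ when $F$ is a facet and $0$ otherwise.

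Since both Theorem~\ref{thm:middledegree} and Lemma~\ref{lemma:irreducible} are taken as given, there is no genuine obstacle here; the only subtlety is the elementary check that the polynomials $[F]$ for varying $F$ are pairwise non-associate, which is handled by the variable-support argument above.
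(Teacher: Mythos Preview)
Your proof is correct and follows exactly the approach indicated in the paper, which states that the corollary is ``immediately deduce[d]'' from Theorem~\ref{thm:middledegree} via the valuation homomorphism $\ord_{[F]}$. The only detail you spell out beyond the paper is the verification that distinct $[F]$ are pairwise non-associate, which is handled correctly by your variable-support argument.
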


When $\Delta$ is a simplicial sphere, 
Corollary~\ref{cor:ordervanishing} was conjectured by Papadakis and Petrotou \cite[Conjecture~5.4]{PapadakisPetrotouGenericAnisotropy1Spheres}, who called it the \emph{odd multiplicity conjecture}. This conjecture has motivated our work.

\medskip

We prove a generalization of the odd multiplicity conjecture. 
Assume that $\Char k = 2$, or $\Char k = 0$ and 
the integral homology of the link of any 
face (including the empty face)
of $\Delta$ has no $2$-torsion. 
Let $\ell = \sum_{j=1}^{n} x_j \in \overline{H}^1(\Delta)$. For $0 \le q \le d/2$, define the Hodge--Riemann form $\overline{H}^q(\Delta) \times \overline{H}^q(\Delta) \to K$ via $(y, z) \mapsto \deg(\ell^{d - 2q}  y  z)$. 
When $d$ is even and $q = d/2$, the Hodge--Riemann form is the bilinear form on $\overline{H}^{d/2}(\Delta)$ considered above.
 Let $D_{q}$ be the determinant of the Hodge--Riemann form on $\overline{H}^q(\Delta)$.

\begin{theorem}\label{thm:oddmultHodgeRiemann}
	Let $\Delta$ be a connected oriented simplicial $k$-homology manifold of dimension $d-1$ with vertex set $V$,  and let $0 \le q \le d/2$. 
	Assume that $\Char k = 2$, or $\Char k = 0$ and the integral homology of the link of any face (including the empty face) of $\Delta$ has no $2$-torsion.
	Let $F$ be a subset of $V$ of size $d$. Then
	$$\ord_{[F]}(D_{q}) = \begin{cases} 1 & \text{if }F \text{ is a facet of }\Delta \\ 0 & \text{otherwise}.\end{cases}$$
\end{theorem}

When $d$ is even and $q = d/2$, Theorem~\ref{thm:oddmultHodgeRiemann} follows from
Corollary~\ref{cor:ordervanishing}.
The nondegeneracy of the Hodge--Riemann form, which is part of 
Theorem~\ref{thm:oddmultHodgeRiemann},
is equivalent to the map $\overline{H}^q(\Delta) \to \overline{H}^{d  - q}(\Delta)$ given by multiplication by $\ell^{d  -2q}$ being an isomorphism. By Lemma~\ref{lem:slp}, this is equivalent to $\overline{H}(\Delta)$ having the strong Lefschetz property in degree $q$, i.e., that there is some $y \in \overline{H}^1(\Delta)$ such that the map $\overline{H}^q(\Delta) \to \overline{H}^{d  - q}(\Delta)$ given by multiplication by $y^{d - 2q}$ is an isomorphism.

In particular, 
Theorem~\ref{thm:oddmultHodgeRiemann}
is a generalization of the algebraic $g$-conjecture for $\Delta$ (that $\overline{H}(\Delta)$ has the strong Lefschetz property), 
and it implies that the Hodge--Riemann form in any degree is a complete invariant of $\Delta$. 
A proof of the algebraic $g$-conjecture for connected oriented simplicial pseudomanifolds was announced in \cite{APP}; see also \cite{Adiprasitog,KaruXiaoAnisotropy,PapadakisPetrotoug}. We have been heavily inspired by the recent progress on the algebraic $g$-conjecture, and, in particular, the key insight that one should  study the generic artinian reduction of $K[\Delta]$ and the corresponding degree map. See also \cite{APPVolume}.

Theorem~\ref{thm:oddmultHodgeRiemann}
follows from a strengthening of the algebraic $g$-conjecture for less generic artinian reductions of $K[\Delta]$. 
Let $F$ be a subset of $V$ of size $d$ which is not a facet of $\Delta$, and set $\theta_1^F = \sum_{j \not \in F} a_{1, j} x_j$. Then $\theta_1^F, \theta_2, \dotsc, \theta_d$ is still a linear system of parameters for $K[\Delta]$ 
(see Proposition~\ref{prop:stanleycriterion}). 
Let $H_F(\Delta) = K[\Delta]/(\theta_1^F, \theta_2, \dotsc, \theta_d)$. Then there is a distinguished isomorphism $\deg_F \colon H^d_F(\Delta) \to K$ (see Section~\ref{sec:2}). Set $\overline{H}_F(\Delta)$ to be the Gorenstein quotient of $H_F(\Delta)$. 
For example, if $\Delta$ is a homology sphere over $k$, then $H_F(\Delta) = \overline{H}_F(\Delta)$. We deduce Theorem~\ref{thm:oddmultHodgeRiemann} from the following theorem. 

\begin{theorem}\label{thm:strongg}
	Let $\Delta$ be a connected oriented simplicial 
	$k$-homology manifold 
	of dimension $d-1$, and let $0 \le q \le d/2$. 
	Assume that $\Char k = 2$, or $\Char k = 0$ and 
	the integral homology of the link of any face (including the empty face) of $\Delta$ has no $2$-torsion.
	Then 
	for every non-face $F$ of size $d$, $\overline{H}_F(\Delta)$ has the strong Lefschetz property in degree $q$. 
\end{theorem}

We use the characteristic $2$ method of Papadakis and Petrotou \cite{PapadakisPetrotoug}, as refined by Karu and Xiao \cite{KaruXiaoAnisotropy}. In the case of $\overline{H}(\Delta)$, this method was used to show that the Hodge--Riemann forms are \emph{anisotropic} 
\cite[Theorem 4.4]{KaruXiaoAnisotropy}.
This can fail in the case of $\overline{H}_F(\Delta)$ (see Example~\ref{ex:anisotropyfails}), but we show that anisotropy ``almost'' holds (Proposition~\ref{prop:almostanisotropy}) and use this to deduce Theorem~\ref{thm:strongg}.

When $k$ has characteristic $0$ and $\Delta$ is a polytopal sphere, i.e., is the boundary of a simplicial polytope, Stanley's proof of this case of the algebraic $g$-conjecture \cite{Stanleyg} can be adapted to give a simple proof of Theorem~\ref{thm:strongg}; see Remark~\ref{rem:polytopal}.

\medskip

Recall that a pure simplicial complex $\Delta$ of dimension  $d  - 1$ is a \emph{pseudomanifold} if  every $(d-2)$-dimensional face lies in exactly two facets, and each connected component of the geometric realization of $\Delta$ remains connected after we remove its $(d-3)$-skeleton. Any $k$-homology manifold is a pseudomanifold, and the constructions of $H(\Delta)$, $\overline{H}(\Delta)$, $\deg$, $H_F(\Delta)$, $\overline{H}_F(\Delta)$, and $\deg_F$ above are valid for pseudomanifolds. For connected oriented pseudomanifolds, we establish Theorem~\ref{thm:oddmultHodgeRiemann} and Theorem~\ref{thm:strongg} when $q = 0$ for all fields $k$ (see Theorem~\ref{thm:i=0} and Remark~\ref{rem:stronggq=0} respectively), and, if we further assume that $\Char k = 2$, then we establish 
Theorem~\ref{thm:oddmultHodgeRiemann} when $q = 1$ (see Remark~\ref{rem:q=1}) and Theorem~\ref{thm:strongg} in all degrees (see Theorem~\ref{thm:stronggpseudo}). 
We conjecture that all the results above hold for connected oriented pseudomanifolds over any field $k$ (Conjecture~\ref{conj:pseudoextension}). In Section~\ref{sec:discussion}, we discuss the sole obstruction to using our methods to prove this conjecture when $\Char k = 2$.

Our paper is organized as follows. In Section~\ref{sec:2}, we recall the construction and properties of the degree map. In Section~\ref{sec:special}, we compute some special cases which will be used in the proofs of the main theorems. In Section~\ref{sec:degreezero}, we prove the $q=0$ case of Theorems ~\ref{thm:oddmultHodgeRiemann} and ~\ref{thm:strongg}. In Sections~\ref{sec:almostanisotropy} and ~\ref{sec:char2}, we prove Theorem~\ref{thm:strongg} when $\Char k = 2$. 
In Section~\ref{sec:proofs}, we prove the main theorems. In Section~\ref{sec:discussion}, we give some examples and discuss possible extensions.

Throughout, we fix a connected oriented 
simplicial pseudomanifold  $\Delta$ over $k$ of dimension $d-1$ with vertex set $V$. We will sometimes further assume that $\Delta$ is a $k$-homology manifold, and we will 
occasionally
 omit the connected, oriented, and simplicial hypotheses. 
If $G$ is a face of $\Delta$ with vertices $\{ j_1, \dotsc,j_r\}$, we write $x_G \coloneqq x_{j_1} \dotsb x_{j_r}$ for the corresponding monomial in $K[\Delta]$. We will sometimes abuse notation and use $x_G$ to denote its image in $H(\Delta)$ or $\overline{H}(\Delta)$. See
\cite{StanleyCombinatoricsCommutative} 
for any undefined terminology. 

We will assume throughout that $d > 1$. If $d = 1$, 
the (not connected) case of a simplicial sphere of dimension $0$, i.e., $\Delta$ consists of two points, is discussed in Example~\ref{ex:d=1}.

\subsection*{Acknowledgements} 
We thank Ed Swartz for suggesting Example~\ref{ex:hilbertdependence}, and we thank the referees for their helpful comments. 
The work of the second author is partially supported by NSF grant DMS-2246399. The first and third authors thank the Institute for Advanced Study, where part of this work was conducted, for a pleasant environment.

\section{Degree maps}\label{sec:2}

We now discuss degree maps on artinian reductions of Stanley--Reisner rings of connected oriented simplicial
pseudomanifolds over $k$. Throughout the paper, we will compare degree maps associated to different artinian reductions.
The normalization of the degree map will be crucial in what follows, as the 
results of the introduction
can fail if we use an arbitrary isomorphism from $H^d(\Delta)$ to $K$. Explicitly, two such isomorphisms vary by multiplication by a nonzero element $\omega \in K$, and if $p = \dim \overline{H}^{q}(\Delta)$ is odd, then the determinant of the Hodge--Riemann form on $\overline{H}^{q}(\Delta)$ 
will vary by multiplication by $\omega^p = \omega \in K^{\times}/(K^{\times})^2$. 

\smallskip

We first discuss orientations over $k$ in the case when the characteristic of $k$ is not $2$. 
If $d > 1$, then an \emph{orientation} on a $(d-1)$-dimensional simplex is a choice of ordering of the vertices, up to changing the ordering by an even permutation. If $d = 1$, then an orientation on a $(d-1)$-dimensional simplex is a choice of $\epsilon \in \{1, -1\}$. 
An orientation on a $(d-1)$-dimensional simplex induces an orientation on each facet. If $d > 2$ and the simplex is ordered by $\{v_1 < \dotsb < v_d\}$, then we orient $\{v_2, \dotsc, v_d\}$ using the ordering $v_2 < \dotsb < v_d$, and we orient the facet which omits $v_i$ by changing the ordering by even permutations so that $v_i$ is first. If $d = 2$ and the simplex is $\{v_1 < v_2\}$, then we orient $\{v_1\}$ by $-1$ and orient $\{v_2\}$ by $1$. 

An \emph{orientation} of $\Delta$ over $k$ is a choice of orientation for each facet of $\Delta$ such that the two orientations on any $(d-2)$-dimensional simplex of $\Delta$ induced by the two facets containing it are opposite. In what follows, we fix a choice of orientation. 

If $k$ has characteristic $2$, then we say that any 
pseudomanifold
is oriented over $k$ by definition. 

For each facet $F = \{j_1 < \dotsb < j_d\}$, the orientation on $\Delta$ defines a sign $\epsilon_F \in \{1, -1\}$, which is $1$ if the permutation which takes $(j_1, \dotsc, j_d)$ to the ordering given by the orientation is even, and is $-1$ if this permutation is odd. If the characteristic of $k$ is $2$, then $\epsilon_F = 1$ by definition.

\smallskip

There is an explicit isomorphism $\deg \colon H^d(\Delta) \to K$, called the \emph{degree map}. This isomorphism was constructed by Brion \cite{BrionStructurePolytopeAlgebra}; see also \cite[Lemma 2.2]{KaruXiaoAnisotropy}.  
Recall that, for a subset $F = \{j_1 < \dotsb < j_{d}\}$ of $V$ of size $d$, $[F]$ is the determinant of the matrix whose $(i, m)$th entry is $a_{i,j_m}$.

\begin{proposition}\label{prop:degmap}
There is an isomorphism $\deg \colon H^d(\Delta) \to K$ of $K$-vector spaces such that, for any facet $F$ of $\Delta$,
we have 
\begin{equation}\label{eq:degreefacet}
	\deg(x_{F}) = \frac{\epsilon_F}{[F]}.
\end{equation}
\end{proposition}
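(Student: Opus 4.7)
The plan is to prove the proposition in three stages: establish that $H^d(\Delta)$ is one-dimensional with any facet monomial as a generator; define $\deg$ by the formula on a single facet; and propagate the formula to all facets via relations across shared $(d-2)$-faces.

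First, I would verify that $H^d(\Delta)$ is a one-dimensional $K$-vector space, generated by $x_F$ for any facet $F$. For a connected oriented simplicial $k$-homology manifold, this is standard: although $K[\Delta]$ is only Buchsbaum in general, the top-degree component of the generic artinian reduction is one-dimensional, essentially because of Poincar\'e--Lefschetz duality (one can also see this via the socle structure of $H(\Delta)$). Non-vanishing of $x_F$ in $H^d(\Delta)$ can be checked by restricting to the (Cohen--Macaulay) Stanley--Reisner ring of the boundary of the simplex on $F$ together with an inclusion-of-links argument.

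Having $\dim_K H^d(\Delta) = 1$, I would fix any facet $F_0$ and define $\deg \colon H^d(\Delta) \to K$ by $\deg(x_{F_0}) = \epsilon_{F_0}/[F_0]$ and extending linearly; this is an isomorphism. The essential task is to show $\deg(x_F) = \epsilon_F / [F]$ for every other facet $F$. Since the dual graph of $\Delta$ (facets adjacent through codimension-one faces) is connected, it suffices to handle the case of two adjacent facets $F_1 = H \cup \{v_1\}$ and $F_2 = H \cup \{v_2\}$ sharing a $(d-2)$-face $H$.

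For the adjacency step, I would exploit the relations $x_H \theta_i = 0$ in $H(\Delta)$, which expand to
\[ a_{i, v_1} x_{F_1} + a_{i, v_2} x_{F_2} + \sum_{v \in H} a_{i, v} (x_v x_H) = 0, \qquad i = 1, \dotsc, d, \]
since for $v \in H$ the monomial $x_H x_v$ involves a square but is still nonzero in $K[\Delta]$, while for $v \not\in H \cup \{v_1,v_2\}$ the product $x_H x_v$ vanishes by Stanley--Reisner. Because $H^d(\Delta)$ is one-dimensional, I may write $x_{F_2} = \mu \, x_{F_1}$ and $x_v x_H = \beta_v \, x_{F_1}$ in $H^d(\Delta)$. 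The $d$ relations then assert that the vector $(1, \mu, \beta_{h_1}, \dotsc, \beta_{h_{d-1}})$ lies in the kernel of the $d \times (d+1)$ matrix whose columns are indexed by $S \coloneqq H \cup \{v_1, v_2\}$ and whose entries are $(a_{i, w})$. By Cram\'er's rule, this kernel is spanned by the vector of signed $d \times d$ minors $\bigl((-1)^m [S \setminus \{w_m\}]\bigr)_{w_m \in S}$, and comparing the first two coordinates gives $\mu = \pm [F_1]/[F_2]$, with the sign determined by the positions of $v_1, v_2$ in the natural ordering of $S$. The required identity $\deg(x_{F_2}) = \mu \, \deg(x_{F_1})$ then reduces to a sign identity between this Cram\'er sign and $\epsilon_{F_1}\epsilon_{F_2}$, which follows directly from the convention that the induced orientations on $H$ from $F_1$ and $F_2$ are opposite.

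The main obstacle is the first stage: one-dimensionality of $H^d(\Delta)$ in the general (non-sphere) manifold setting requires genuine commutative-algebra input (via Schenzel-type formulas for the Hilbert function of Buchsbaum modules, or a direct duality argument), whereas the sign computation in the final step is formally straightforward once the orientation conventions are written out explicitly.
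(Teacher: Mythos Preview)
Your proposal is correct and matches the paper's own treatment. The paper cites Brion and Karu--Xiao for the proposition itself, cites Schenzel for the one-dimensionality of $H^d(\Delta)$, and then carries out exactly your adjacency argument (immediately after Lemma~\ref{lem:Gaussianelim}): multiply the Cramer's rule relation by $x_{F\smallsetminus\{j_m\}}$ so that only the term for the adjacent facet survives, obtaining $[F]x_F = \pm [F']x_{F'}$, and trace the sign against the orientation convention. Your version with the full $d\times(d+1)$ system from $x_H\theta_i=0$ is just a repackaging of the same Cramer's rule step.

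One small wrinkle: your proposed check that $x_F\neq 0$ via ``restricting to the Stanley--Reisner ring of the boundary of the simplex on $F$'' is unclear as stated (the boundary of a $d$-element simplex is $(d-2)$-dimensional, and restricting to $K[x_j:j\in F]$ kills $x_F$ after quotienting by the l.s.o.p.). The cleaner route, implicit in the paper, is that the squarefree monomials $x_G$ for facets $G$ span $H^d(\Delta)$ (via repeated use of~\eqref{eq:makesquarefree}), so some $x_G\neq 0$, and then your own Cramer adjacency step with nonzero ratio $[F_1]/[F_2]$ propagates non-vanishing to every facet through the connected dual graph.
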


In particular, if $k$ does not have characteristic $2$, then the degree map associated to the opposite orientation is the negative of the original degree map. 

More generally, consider $d$ elements $\mu = (\mu_1, \dotsc, \mu_d)$  in $K[\Delta]$ of degree $1$, with $\mu_i = \sum_{j \in V} \mu_{i,j} x_j$ for some  $\mu_{i,j} \in K$.
Let $k[a_{i,j}]$ denote the polynomial ring 
$k[a_{i,j}]_{1 \le i \le d, \, 1 \le j \le n}$ with fraction field $K$, and  consider the $k$-algebra homomorphism $\ev_\mu \colon k[a_{i,j}] \to K$ defined by
\[
\ev_\mu(a_{i,j}) = \mu_{i,j}. 
\]
We will use the following criterion
for the elements of $\mu$ to be a linear system of parameters (l.s.o.p.).
\begin{proposition}
\cite[Lemma III.2.4]{StanleyCombinatoricsCommutative}	
	\label{prop:stanleycriterion}
Consider $d$ elements $\mu = (\mu_1, \dotsc, \mu_d)$  in $K[\Delta]$ of degree $1$.
 Then $\mu_1, \dotsc, \mu_d$ is an l.s.o.p. if and only if $\ev_\mu([F]) \neq 0$ for each facet $F$ of $\Delta$. 
\end{proposition}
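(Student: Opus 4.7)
The plan is to reduce the statement to a dimension computation on each irreducible component of $\Spec K[\Delta]$, using the well-known description of the minimal primes of a Stanley--Reisner ring.

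First, I recall the standard characterization that $\mu_1, \dotsc, \mu_d$ is an l.s.o.p.\ if and only if $K[\Delta]/(\mu_1, \dotsc, \mu_d)$ is a finite-dimensional $K$-vector space. Since $\dim K[\Delta] = d$, this is equivalent to requiring that the image of $(\mu_1, \dotsc, \mu_d)$ in $K[\Delta]/P$ is an l.s.o.p.\ for every minimal prime $P$ of $K[\Delta]$. The minimal primes of $K[\Delta]$ are precisely $P_F = (x_j : j \notin F)$ as $F$ ranges over the facets of $\Delta$, and $K[\Delta]/P_F$ is canonically isomorphic to the polynomial ring $K[x_j : j \in F]$ in $d$ variables.

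Next, I observe that under this isomorphism the image of $\mu_i$ is the linear form $\sum_{j \in F} \mu_{i,j} x_j$. Since the polynomial ring $K[x_j : j \in F]$ has Krull dimension $d$, a sequence of $d$ linear forms in it is an l.s.o.p.\ if and only if the forms are linearly independent over $K$, which is in turn equivalent to the non-vanishing of the determinant of the $d \times d$ matrix $(\mu_{i,j})_{1 \le i \le d, \, j \in F}$. By the definition of $[F]$ as the corresponding determinant in the variables $a_{i,j}$, this determinant is exactly $\ev_\mu([F])$.

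Combining these two reductions gives the criterion: $\mu_1, \dotsc, \mu_d$ is an l.s.o.p.\ if and only if $\ev_\mu([F]) \neq 0$ for each facet $F$ of $\Delta$. There is no real obstacle here; the only thing to be careful about is the passage from ``l.s.o.p.\ in $K[\Delta]$'' to ``l.s.o.p.\ modulo each minimal prime,'' which relies on the fact that $K[\Delta]$ is reduced so that the intersection of the $P_F$ is zero and Krull dimension is computed as the maximum of $\dim K[\Delta]/P_F$. Since this is precisely the content of \cite[Proposition 4.3]{Stanley92}, the argument can be presented very briefly with a citation.
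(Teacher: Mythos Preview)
The paper does not supply its own proof of this proposition; it simply cites \cite[Proposition~4.3]{Stanley92}. Your argument is the standard one and is correct: reduce to the minimal primes $P_F=(x_j:j\notin F)$ of $K[\Delta]$, identify $K[\Delta]/P_F$ with the polynomial ring $K[x_j:j\in F]$, and observe that $d$ linear forms in $d$ variables form an l.s.o.p.\ exactly when their coefficient matrix is nonsingular, i.e., when $\ev_\mu([F])\neq 0$.

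One minor comment: the passage from ``l.s.o.p.\ in $K[\Delta]$'' to ``l.s.o.p.\ modulo each minimal prime'' does not actually require reducedness. What you need is that $\sqrt{(\mu_1,\dotsc,\mu_d)}$ equals the irrelevant maximal ideal, and since every prime containing $(\mu_1,\dotsc,\mu_d)$ lies over some minimal prime, this is equivalent to the image of $(\mu_1,\dotsc,\mu_d)$ being $\mathfrak m$-primary in each $K[\Delta]/P_F$. Reducedness of $K[\Delta]$ is true but not the essential point here. Also implicit in your argument (and in the paper's statement) is that $\Delta$ is pure of dimension $d-1$, so that every facet has exactly $d$ elements and $[F]$ is defined; this holds in the paper's setting since $\Delta$ is a homology manifold.
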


Suppose that $\mu = ( \mu_1, \dotsc, \mu_d )$ is an l.s.o.p. Let $H_{\mu}(\Delta) \coloneqq K[\Delta]/(\mu_1, \dotsc, \mu_d)$. 
We still have $\dim H^d_{\mu}(\Delta) = 1$ (see, for example,
\cite[Corollary~3.2]{TWHomological}), 
and so 
the degree map described in Proposition~\ref{prop:degmap} ``specializes'' to an isomorphism $\deg_\mu \colon H^d_{\mu}(\Delta) \to K$ of $K$-vector spaces  such that, for a fixed choice of facet $F$ of $\Delta$,
\begin{equation}\label{eq:degreefacetmu}
	\deg_\mu(x_F) = \frac{\epsilon_F}{\ev_\mu([F])}.
\end{equation} 
We will verify below that \eqref{eq:degreefacetmu} is independent of  the choice of facet $F$. 
We also have a well-defined Gorenstein quotient $\overline{H}_\mu(\Delta)$, i.e., the quotient of $H_\mu(\Delta)$ by the ideal $(y \in H_\mu(\Delta) : (y  z)_d = 0 \text{ for all }z \in H_\mu(\Delta))$, where $w_d$ denotes the degree $d$ component of $w$ in $H_\mu(\Delta)$. 
For example, as in the statement of 
Theorem~\ref{thm:strongg},
let $F$ be a subset of $V$ of size $d$ which is not a facet of $\Delta$, and set $\theta_1^F = \sum_{j \not \in F} a_{1, j} x_j$. Then $\theta_F = (\theta_1^F, \theta_2, \dotsc, \theta_d)$ is an l.s.o.p., and we write 
$H_F(\Delta) \coloneqq H_{\theta_F}(\Delta)$, $\overline{H}_F(\Delta) \coloneqq \overline{H}_{\theta_F}(\Delta)$, and $\deg_F \coloneqq \deg_{\theta_F}$.

\smallskip

We now describe two known techniques that can be used to compute the degree map. We first recall the following application of Cramer's rule; see, e.g., \cite[Proposition~2.1]{PapadakisPetrotouGenericAnisotropy1Spheres}. Below, $\sgn(\pi) \in \{ \pm 1 \}$ denotes the sign of a permutation $\pi$. 

\begin{lemma}\label{lem:Gaussianelim}
	Let $\mu = ( \mu_1, \dotsc, \mu_d )$ be an l.s.o.p.	
	Let $F = \{j_1 < \dotsb < j_{d}\}$ be a subset of $V$ of size $d$.
	Fix $1 \le m \le d$. Then 
\begin{equation}\label{eq:gaussianelim}
		\ev_\mu([F]) x_{j_m} = - \sum_{v \in V \smallsetminus F}  \sgn(\pi_v)	
	\ev_\mu([F \cup \{ v \} \smallsetminus \{ j_m\}]) x_v  \in H^1_\mu(\Delta),
\end{equation}
	where $\pi_v \in S_d$ is the permutation such that 
	the elements of $\pi_v   (j_1,\ldots,j_{m - 1},v,j_{m + 1},\ldots,j_d)$ are in increasing order.

\end{lemma}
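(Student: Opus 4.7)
The plan is to apply Cramer's rule to the linear relations among $x_1, \dots, x_n$ coming from the l.s.o.p.\ in degree one. Specifically, since each $\mu_i$ vanishes in $H_\mu(\Delta)$, the identity
\[
\sum_{j \in V} \mu_{i,j} x_j = 0
\]
holds in $H^1_\mu(\Delta)$ for every $i = 1, \dots, d$. Let $A_F$ denote the $d \times d$ matrix with $(i,l)$-entry $\mu_{i, j_l}$, so that $\det A_F = \ev_\mu([F])$, and let $C_{i,m}$ denote the $(i,m)$-cofactor of $A_F$.

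The first step is to form the linear combination $\sum_{i=1}^d C_{i,m} \bigl( \sum_{j \in V} \mu_{i,j} x_j \bigr) = 0$, which rearranges to
\[
\sum_{j \in V} \Big( \sum_{i=1}^d C_{i,m}\, \mu_{i,j} \Big)\, x_j \; = \; 0 \quad \text{in } H^1_\mu(\Delta).
\]
For $j = j_l \in F$, Laplace expansion of $A_F$ along column $m$ gives $\sum_i C_{i,m} \mu_{i,j_l} = \delta_{l,m} \det A_F$, so the contribution from $F$ collapses to exactly $\ev_\mu([F])\, x_{j_m}$. For $j = v \notin F$, the inner sum equals $\det B_v$, where $B_v$ is obtained from $A_F$ by replacing its $m$-th column with $(\mu_{1,v}, \dots, \mu_{d,v})^T$.

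To finish, I would match $\det B_v$ with $\ev_\mu([F \cup \{v\} \smallsetminus \{j_m\}])$ up to a sign. The columns of $B_v$ are indexed in order by $(j_1, \dots, j_{m-1}, v, j_{m+1}, \dots, j_d)$, whereas by definition $[F \cup \{v\} \smallsetminus \{j_m\}]$ is obtained by first sorting these indices into increasing order; reordering columns by $\pi_v$ multiplies the determinant by $\sgn(\pi_v)$, giving $\det B_v = \sgn(\pi_v)\, \ev_\mu([F \cup \{v\} \smallsetminus \{j_m\}])$. Substituting back and moving the $x_{j_m}$ term to the other side yields the displayed identity. There is no serious obstacle here; the only point that needs care is this final sign bookkeeping, which follows from the standard identity $\det(AP) = \sgn(\pi) \det(A)$ for the permutation matrix $P$ of $\pi$.
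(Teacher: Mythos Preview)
Your proof is correct and is precisely the Cramer's rule argument that the paper invokes (the paper merely cites this as a known application of Cramer's rule without spelling it out). The cofactor expansion and sign bookkeeping you give are exactly what is needed.
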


Suppose that $F$ and $F'$ are facets of $\Delta$. 
Since $\Delta$ is a connected pseudomanifold,
there is a sequence of facets $F = F_1, F_2, \ldots,F_s = F'$, where $F_j$ and $F_{j + 1}$ meet along a common face of dimension $d-2$ for $1 \le j < s$. Suppose that $F = \{ j_1 < \cdots < j_d \}$ and $F'$ meet along the common face $F \smallsetminus \{ j_m \}$. Since $F$ and $F'$ are the only facets containing $F\smallsetminus \{j_m\}$, multiplying \eqref{eq:gaussianelim} by $x_F/x_{j_m}$ and tracing through the signs yields that $\epsilon_{F'} \operatorname{ev}_{\mu}([F']) x_{F'} = \epsilon_{F} \operatorname{ev}_{\mu}([F])x_{F} \in H^d_\mu(\Delta)$. We conclude that \eqref{eq:degreefacetmu} holds for any facet $F$ of $\Delta$. 

Given a nonzero monomial $x_{j_1}^{b_{j_1}} \dotsb x_{j_s}^{b_{j_s}} \in K[\Delta]$ with each $b_{j} > 0$, define its \emph{support} to be the face $\{j_1, \dotsc, j_s\}$ of $\Delta$. Suppose that the above monomial is not squarefree, i.e., $b_{j_m} > 1$ for some $1 \le m \le s$. Let $F$ be a facet containing the support $\{j_1, \dotsc, j_s\}$. Then Lemma~\ref{lem:Gaussianelim} implies that 
\begin{equation}\label{eq:makesquarefree} 
	x_{j_1}^{b_{j_1}} \dotsb x_{j_s}^{b_{j_s}} = - \frac{1}{\ev_\mu([F])}\sum_{v \in V \smallsetminus F}  \sgn(\pi_v)
	\ev_\mu([F \cup \{ v \} \smallsetminus \{ j_m\}]) \frac{x_v  x_{j_1}^{b_{j_1}} \dotsb x_{j_s}^{b_{j_s}}}{x_{j_m}} \in H_\mu(\Delta),
\end{equation}
for some permutations $\pi_v$ as defined in Lemma~\ref{lem:Gaussianelim}. Importantly, all nonzero monomials on the right-hand side of \eqref{eq:makesquarefree} have support strictly containing the support of $x_{j_1}^{b_{j_1}} \dotsb x_{j_s}^{b_{j_s}}$. Hence we may compute the degree of any monomial by using \eqref{eq:makesquarefree} to repeatedly increase the size of the support.

Throughout the paper, we will repeatedly use the following lemma to compare different artinian reductions.
Let $R \subset K$ be the localization of $k[a_{i,j}]$ at the irreducible polynomials  $\{ [F] : F \textrm{ facet of } \Delta \}$. By Proposition~\ref{prop:stanleycriterion}, $\ev_\mu$  extends to a $k$-algebra homomorphism $\ev_\mu \colon R \to K$. 

\begin{lemma}\label{lem:degmu}
		Let $\mu = ( \mu_1, \dotsc, \mu_d )$ be an l.s.o.p.
	Let $g \in k[x_1,\ldots,x_n]_d$ be a polynomial of degree $d$. 
		Then $\deg(g) \in R$ and $\deg_\mu(g) = \ev_\mu(\deg(g)) $. 
	
\end{lemma}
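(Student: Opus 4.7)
The plan is to prove both claims by induction on the codimension $d - s$ of the support of a monomial, where $s = |\supp(\alpha)|$. By linearity, it suffices to consider $g = x^\alpha$ a single monomial of degree $d$. If $\supp(\alpha)$ is not a face of $\Delta$, then $x^\alpha$ vanishes in both $H(\Delta)$ and $H_\mu(\Delta)$, so both sides of the claim are zero. If $|\supp(\alpha)| = d$, then $\supp(\alpha) = F$ must be a facet with $\alpha = \mathbf{1}_F$, and the statement follows directly from \eqref{eq:degreefacet} and \eqref{eq:degreefacetmu}: $\deg(x_F) = \epsilon_F / [F] \in R$ since $[F]$ is inverted in $R$, and $\ev_\mu(\epsilon_F/[F]) = \epsilon_F/\ev_\mu([F]) = \deg_\mu(x_F)$.

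For the inductive step, suppose $|\supp(\alpha)| = s < d$, so some exponent $b_{j_m}$ of $\alpha$ satisfies $b_{j_m} \geq 2$. Choose any facet $F$ of $\Delta$ containing $\supp(\alpha)$. Applying the reduction formula \eqref{eq:makesquarefree} to the generic l.s.o.p. $\theta$ (for which $\ev_\theta([F']) = [F']$), we obtain an identity
\[
x^\alpha = \sum_{v \in V \smallsetminus F} c_v \cdot x^{\beta_v} \in H(\Delta),
\]
where each coefficient $c_v = -\sgn(\pi_v) \cdot [F \cup \{v\} \smallsetminus \{j_m\}]/[F]$ lies in $R$, and each $x^{\beta_v}$ is a degree-$d$ monomial whose support strictly contains $\supp(\alpha)$. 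By the inductive hypothesis, each $\deg(x^{\beta_v}) \in R$, and therefore $\deg(x^\alpha) = \sum_v c_v \cdot \deg(x^{\beta_v}) \in R$. Applying \eqref{eq:makesquarefree} instead with $\mu$ yields the specialized identity $x^\alpha = \sum_v \ev_\mu(c_v) \cdot x^{\beta_v}$ in $H_\mu(\Delta)$, so
\[
\deg_\mu(x^\alpha) = \sum_v \ev_\mu(c_v) \cdot \deg_\mu(x^{\beta_v}) = \sum_v \ev_\mu(c_v) \cdot \ev_\mu(\deg(x^{\beta_v})) = \ev_\mu(\deg(x^\alpha)),
\]
using the inductive hypothesis and the fact that $\ev_\mu \colon R \to K$ is a ring homomorphism.

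The key observation underlying the argument is that the coefficients produced by \eqref{eq:makesquarefree} applied to the generic $\theta$ have denominators only among the polynomials $[F]$ for facets $F$ of $\Delta$, so they lie in $R$; this is precisely what allows $\ev_\mu$ to commute with the reduction procedure. I do not anticipate a substantial obstacle beyond careful bookkeeping: the induction terminates because the support size strictly increases at each step and is bounded above by $d$.
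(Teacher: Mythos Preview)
Your proof is correct and follows exactly the same approach as the paper's own proof: reduce to monomials, handle the squarefree case directly via \eqref{eq:degreefacet} and \eqref{eq:degreefacetmu}, and for non-squarefree monomials use \eqref{eq:makesquarefree} to increase the support size until the squarefree case is reached. The paper compresses this into two sentences, while you have spelled out the induction carefully; the only minor imprecision is that some $x^{\beta_v}$ on the right-hand side of \eqref{eq:makesquarefree} may be zero in $K[\Delta]$ (when $\supp(\alpha)\cup\{v\}$ is not a face), but since these vanish in both $H(\Delta)$ and $H_\mu(\Delta)$ this does not affect the argument.
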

\begin{proof}
	It is enough to consider the case when $g$ is a monomial. If $g$ is squarefree, then the result follows from \eqref{eq:degreefacet} and \eqref{eq:degreefacetmu}. If $g$ is not squarefree, then 
	the result follows by using \eqref{eq:makesquarefree} to repeatedly increase the size of the support.
\end{proof}

We will apply Lemma~\ref{lem:degmu} in Sections \ref{sec:special}, \ref{sec:degreezero}, and \ref{sec:proofs} in combination with the following simple observation. We will often use the remark below with $P = [F]$ for some non-face  $F$ of size $d$.

 \begin{remark}\label{r:ordernonfaces}
		
		Consider an element $f \in R$. 
		Let $P \in k[a_{i,j}]$ be an irreducible polynomial, and suppose that there is an l.s.o.p.~$\mu$ with $\ev_{\mu}(P) = 0$, but $\ev_\mu(f) \not=0$. We claim that $\ord_{P}(f) = 0$. 
		Indeed, because $\ev_{\mu}(P) = 0$,  $P$ is not a scalar multiple of $[F]$ for any facet $F$ of $\Delta$, so $\ord_P(f) \ge 0$.  But $P$ cannot divide $f$ to positive order as $\ev_{\mu}(f) \not=0$. 
\end{remark}

\smallskip

We 
next
recall a formula for the degree map 
due to Karu and Xiao. It is closely related to the work of Brion \cite{BrionStructurePolytopeAlgebra} 
and that of Lee \cite{Lee}. 
To state the formula, we 
define $\hat{V} \coloneqq \{ 0 \} \cup V$ and $\hat{K} \coloneqq K(a_{i,0}: 1 \le i \le d)$. For a subset $\hat{F} = \{j_1 < \dotsb < j_{d}\}$ of $\hat{V}$ of size $d$, let $[\hat{F}]$ be the determinant of the $d \times d$ matrix whose $(i, m)$th entry is $a_{i,j_m}$.

\begin{proposition}\label{prop:karuxiaodegree}
		\cite[Lemma~3.1, Theorem~3.2]{KaruXiaoAnisotropy}
Let $g \in K[x_1,\ldots,x_n]_d$ be a polynomial of degree $d$. For any facet $F = \{j_1 < \dotsb < j_{d}\}$ of $\Delta$, let $g_F(t_1,\ldots,t_d)$ be obtained from $g$ by setting $x_i$ to zero for $i \notin F$ and setting $x_{j_m} = t_m$ for $1 \le m \le d$. Let $X_{F,m} \coloneqq (-1)^m[F \cup \{ 0 \} \smallsetminus \{ j_m\} ] \in \hat{K}$ for $1 \le m \le d$. 
Then 
\begin{equation}\label{eq:KaruXiaoformula}
	\deg(g) = 
	\sum_{F \textrm{ facet of } \Delta} \frac{\epsilon_F g_F(X_{F,1},\ldots,X_{F,d})}{[F] \prod_{m = 1}^d X_{F,m}}.
\end{equation}
\end{proposition}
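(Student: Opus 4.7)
The plan is to interpret the RHS as a $K$-linear functional $\Phi$ on $K[x_1,\ldots,x_n]_d$ and show that $\Phi$ factors through $H^d(\Delta)$ and agrees with $\deg$ there. Since $H^d(\Delta)$ is spanned by the classes of facet monomials $\{x_F : F \text{ facet}\}$, this reduces to three verifications: computing $\Phi$ on facet monomials, checking $\Phi$ kills the Stanley--Reisner relations, and checking $\Phi$ kills the l.s.o.p. relations.

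\textbf{Step 1 (Facet monomials).} For a facet $F$, the only term in the sum that survives is the one indexed by $F$ itself, because for every other facet $F'$ some variable appearing in $x_F$ is zeroed out in forming $(x_F)_{F'}$. Since $(x_F)_F(t_1,\ldots,t_d) = t_1 \cdots t_d$, the $F$-contribution is $\epsilon_F \prod_m X_{F,m} / ([F] \prod_m X_{F,m}) = \epsilon_F/[F] = \deg(x_F)$, as required by Proposition~\ref{prop:degmap}.

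\textbf{Step 2 (Stanley--Reisner relations).} For any non-face $G$ and any $f$ with $x_G f \in K[x_1,\ldots,x_n]_d$: since $G$ is not contained in any facet, the restriction $(x_G f)_{F'}$ vanishes for every facet $F'$, so $\Phi(x_G f) = 0 = \deg(x_G f)$.

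\textbf{Step 3 (l.s.o.p. relations).} Here I would show that $\Phi(\theta_i h) = 0$ for every $i$ and every $h \in K[x_1,\ldots,x_n]_{d-1}$. Let $F = \{j_1 < \cdots < j_d\}$ be a facet and write $e_j = (a_{1,j},\ldots,a_{d,j})^T$. The vector $([F], X_{F,1},\ldots,X_{F,d})$ spans the kernel of the $d \times (d+1)$ matrix with columns $e_0, e_{j_1}, \ldots, e_{j_d}$, yielding the cofactor identity
\[
\sum_{m=1}^d a_{i,j_m} X_{F,m} + a_{i,0}[F] = 0.
\]
Therefore $(\theta_i h)_F(X_{F,1},\ldots,X_{F,d}) = -a_{i,0}[F] \cdot h_F(X_{F,1},\ldots,X_{F,d})$ for each facet $F$, and
\[
\Phi(\theta_i h) = -a_{i,0} \sum_{F \text{ facet of } \Delta} \frac{\epsilon_F \, h_F(X_{F,1},\ldots,X_{F,d})}{X_{F,1} \cdots X_{F,d}}.
\]
It thus suffices to prove the vanishing
\[
\sum_{F \text{ facet of } \Delta} \frac{\epsilon_F \, h_F(X_{F,1},\ldots,X_{F,d})}{X_{F,1} \cdots X_{F,d}} = 0 \qquad \text{for every } h \in K[x_1,\ldots,x_n]_{d-1}.
\]

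\textbf{Main obstacle.} The crux is proving this last vanishing identity; this is where the proof really uses that $\Delta$ is an oriented homology manifold. My plan is to induct on the size of the support of the monomials appearing in $h$. Monomials whose support is a non-face contribute nothing. For a monomial supported on a ridge $G$ of $\Delta$, only the two facets $F_1 = G \cup \{v_1\}$ and $F_2 = G \cup \{v_2\}$ containing $G$ contribute, by the manifold hypothesis; the compatibility of orientations on $\Delta$ forces a relation between $\epsilon_{F_1}$ and $\epsilon_{F_2}$ which, once the matching $X_{F_i,m}$ factors are correlated, produces the needed cancellation. Monomials of smaller support are reduced to the ridge case by using \eqref{eq:makesquarefree} to enlarge their support. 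The most delicate point is the sign bookkeeping: correlating the $(-1)^m$ in the definition of $X_{F,m}$ with the permutation signs in $\epsilon_F$ and in Stanley's Gaussian elimination formula in Lemma~\ref{lem:Gaussianelim} across facets sharing a common ridge. Once this vanishing is established, Steps 1--3 together with $K$-linearity show $\Phi = \deg \circ \pi$ on $K[x_1,\ldots,x_n]_d$, completing the proof.
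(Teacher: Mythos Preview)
The paper does not prove this proposition; it is quoted from Karu--Xiao, so there is no in-paper argument to compare against. Your Steps 1 and 2 are correct, and the cofactor identity in Step 3 is right: the whole proof does reduce to the vanishing
\[
\Psi(h)\;\coloneqq\;\sum_{F\text{ facet}}\frac{\epsilon_F\,h_F(X_{F,1},\dots,X_{F,d})}{\prod_{m}X_{F,m}}\;=\;0
\qquad(h\in K[x_1,\dots,x_n]_{d-1}).
\]

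Your proposed induction on support size has a genuine gap. Equation~\eqref{eq:makesquarefree} is an equality in $H(\Delta)$, i.e.\ modulo $(\theta_1,\dots,\theta_d)$; in the polynomial ring it only says $h=h'+\sum_i\theta_i h_i''$ with $\operatorname{supp}(h')\supsetneq\operatorname{supp}(h)$. Applying $\Psi$ and using your own cofactor identity gives
\[
\Psi(h)=\Psi(h')-\sum_i a_{i,0}\sum_{F}\frac{\epsilon_F\,[F]\,(h_i'')_F(X_{F,\cdot})}{\prod_m X_{F,m}},
\]
and you have no inductive control over the second sum. In effect you are assuming that $\Psi$ already respects the l.s.o.p.\ relations in degree $d-1$, which is exactly what you are trying to prove.

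The fix (and the organizing idea in Karu--Xiao) is to argue in the auxiliary variables $a_{1,0},\dots,a_{d,0}$. Each $X_{F,m}$ is a linear form in these variables, so $\Psi(h)\in\widehat{K}$ is a rational function over $K$ that is homogeneous of degree $(d-1)-d=-1$. Its only possible poles lie along $[G\cup\{0\}]=0$ for ridges $G$, and exactly the two facets $F_1,F_2\supset G$ contribute. On that locus one checks $\xi_{F_1}(k)/[F_1]=\xi_{F_2}(k)/[F_2]$ for all $k\in G$ (while $\xi_{F_i}(k)=0$ for $k\notin G$), so by homogeneity of $h$ the two residues agree up to the sign $(-1)^{p_1}\epsilon_{F_1}+(-1)^{p_2}\epsilon_{F_2}$, which vanishes by orientation compatibility. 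This is exactly your ``ridge cancellation,'' but carried out for \emph{every} $h$ and \emph{every} ridge $G$, not only for $h=x_G$. Once the poles cancel, $\Psi(h)$ is a polynomial in the $a_{i,0}$ of negative homogeneous degree, hence $0$. Replace your support-size induction with this pole/degree argument and the proof goes through.
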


In particular, the expression in \eqref{eq:KaruXiaoformula} lies in $K$. 
When $g \in k[x_1,\ldots,x_n]$, 
this formula specializes to a formula for all linear systems of parameters.
 Explicitly, 
suppose that $\mu = ( \mu_1, \dotsc, \mu_d )$ is an l.s.o.p. 
Recall that we have an 
 evaluation map $\ev_\mu \colon k[a_{i,j}] \to K$ 
defined by $\ev_\mu(a_{i,j}) = \mu_{i,j}$ for $1 \le i \le d$ and $1 \le j \le n$. This naturally extends to 
 a $k$-algebra homomorphism
$\hat{\ev}_\mu \colon k[a_{i,j}]_{1 \le i \le d, 0 \le j \le n} \to \hat{K}$ such that $\hat{\ev}_\mu(a_{i,0}) = a_{i,0}$ for $1 \le i \le d$. 
For the statement below, observe that if  $F  = \{j_1 < \dotsb < j_{d}\}$ is a facet of $\Delta$ and $1 \le m \le d$, then $\hat{\ev}_\mu([F \cup \{ 0 \} \smallsetminus \{ j_m \} ])$ is nonzero since it specializes (up to a sign) to $\ev_\mu([F])$ by setting $a_{i,0}$ to $\mu_{i,j_m}$ for $1 \le i \le d$, and by Proposition~\ref{prop:stanleycriterion}, $\ev_{\mu}([F]) \not=0$.

\begin{corollary}\label{cor:KaruXiaoformu}
	Suppose that $\mu = ( \mu_1, \dotsc, \mu_d )$ is an l.s.o.p.
	Let $g \in k[x_1,\ldots,x_n]_d$ be a polynomial of degree $d$.  For any facet $F = \{j_1 < \dotsb < j_{d}\}$ of $\Delta$, let $g_F(t_1,\ldots,t_d)$ be obtained from $g$ by setting $x_i$ to zero for $i \notin F$ and setting $x_{j_m} = t_m$ for $1 \le m \le d$. Let $X_{F,\mu,m} \coloneqq (-1)^m \hat{\ev}_\mu([F \cup \{ 0 \} \smallsetminus \{ j_m\} ]) \in \hat{K}$ for $1 \le m \le d$. Then
	
	\[
		\deg_\mu(g) = 
	\sum_{F \textrm{ facet of } \Delta} \frac{\epsilon_F g_F(X_{F,\mu,1},\ldots,X_{F,\mu,d})}{\ev_\mu([F]) \prod_{m = 1}^d X_{F,\mu,m}}.
	\]
\end{corollary}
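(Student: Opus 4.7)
The plan is to deduce this corollary from Proposition~\ref{prop:karuxiaodegree} by applying a suitable extension of $\widehat{\ev}_\mu$ to formula \eqref{eq:KaruXiaoformula}, using Lemma~\ref{lem:degmu} to identify the left-hand side with $\deg_\mu(g)$.

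First, I would extend $\widehat{\ev}_\mu$ from $k[a_{i,j}]_{1 \le i \le d,\, 0 \le j \le n}$ to a localization containing every term on both sides of \eqref{eq:KaruXiaoformula}. The elements to be inverted are the $[F]$ for facets $F$ of $\Delta$ together with the polynomials $[F \cup \{ 0 \} \smallsetminus \{ j_m\} ]$ appearing in each $X_{F,m}$. Each is sent to a nonzero element of $\widehat{K}$ by $\widehat{\ev}_\mu$: for $[F]$ this is Stanley's criterion (Proposition~\ref{prop:stanleycriterion}), while for $[F \cup \{ 0 \} \smallsetminus \{ j_m\} ]$ this is precisely the observation made in the paragraph preceding the corollary, where further specializing $a_{i,0} \mapsto \mu_{i,j_m}$ reduces the question to the nonvanishing of $\ev_\mu([F])$. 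Since $\deg(g) \in R$ by Lemma~\ref{lem:degmu} and $R$ embeds in this larger localization, $\widehat{\ev}_\mu$ is well-defined on both sides of \eqref{eq:KaruXiaoformula}.

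Next, I would apply the extended $\widehat{\ev}_\mu$ term by term. On the left, Lemma~\ref{lem:degmu} gives $\widehat{\ev}_\mu(\deg(g)) = \ev_\mu(\deg(g)) = \deg_\mu(g)$. On the right, the factors $\epsilon_F$, $[F]$, and $X_{F,m}$ are sent respectively to $\epsilon_F$, $\ev_\mu([F])$, and $X_{F,\mu,m}$ by definition. The hypothesis $g \in k[x_1,\ldots,x_n]_d$ (rather than the $K[x_1,\ldots,x_n]_d$ allowed in Proposition~\ref{prop:karuxiaodegree}) is used precisely here: since $g_F \in k[t_1,\ldots,t_d]$ has coefficients fixed by $\widehat{\ev}_\mu$, substitution commutes with the homomorphism, so the numerator $g_F(X_{F,1},\ldots,X_{F,d})$ maps to $g_F(X_{F,\mu,1},\ldots,X_{F,\mu,d})$. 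Assembling these pieces yields the stated formula.

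The argument is essentially formal once the extension is set up; the only substantive point is the nonvanishing of denominators under specialization, and this is already addressed in the excerpt. There is no real obstacle — the role of the restriction to $k$-coefficients is just to ensure that $\widehat{\ev}_\mu$ commutes with substitution into $g_F$.
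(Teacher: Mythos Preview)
Your proposal is correct and follows essentially the same approach as the paper: define the localization $\widehat{R}$ of $k[a_{i,j}]_{1\le i\le d,\,0\le j\le n}$ at the polynomials $[F]$ and $X_{F,m}$, extend $\widehat{\ev}_\mu$ to $\widehat{R}$ using the nonvanishing observations already made, and then apply it to both sides of \eqref{eq:KaruXiaoformula}, invoking Lemma~\ref{lem:degmu} for the left-hand side. Your explicit remark on why the hypothesis $g\in k[x_1,\ldots,x_n]_d$ is needed (so that $\widehat{\ev}_\mu$ commutes with substitution into $g_F$) is a nice clarification that the paper leaves implicit.
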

\begin{proof}
	Recall that $R \subset K$ is the localization of $k[a_{i,j}]$ at the irreducible polynomials  $\{ [F] : F \textrm{ is a facet of } \Delta \}$, and $\ev_\mu$ maps $R$ to $K$ which is contained in $\hat{K}$. With the notation of Proposition~\ref{prop:karuxiaodegree}, let $\hat{R} \subset \hat{K}$ be the localization of $k[a_{i,j}]_{1 \le i \le d, 0 \le j \le n}$ at the irreducible polynomials $\{ X_{F,m} : F \textrm{ facet of } \Delta, \, 0 \le m \le d \}$, where $X_{F,0} \coloneqq [F]$. 
	Then $\hat{\ev}_\mu$ extends to a $k$-algebra homomorphism 
	$\hat{\ev}_\mu \colon \hat{R} \to \hat{K}$ such that  $\ev_\mu$ is the restriction of 
	$\hat{\ev}_\mu$ to $R \subset \hat{R}$.
	By Lemma~\ref{lem:degmu}, both sides of \eqref{eq:KaruXiaoformula} lie in $\hat{R}$ and $\hat{\ev}_\mu(\deg(g)) = \deg_\mu(g)$. The result now follows by applying $\hat{\ev}_\mu$ to both sides of 
	\eqref{eq:KaruXiaoformula}. 
\end{proof}

\smallskip

We next
 show that the degree can be computed ``locally'' on $\Delta$, in an appropriate sense. This will be used in Section~\ref{sec:proofs} to reduce to the special cases computed in Section~\ref{sec:special}.
The \emph{closed star} $\Star_\Delta(G)$ of a face $G$ of $\Delta$ is the simplicial complex consisting of 
all faces $G'$ of $\Delta$ that contain $G$, together with their subfaces. 
Let $G = \{j_1, \dotsc, j_s\}$ 
be a face of $\Delta$, and let $S$ be the set of vertices in $\Star_\Delta(G)$.  Let $K_S$ be the subfield of $K$ generated over $k$ by $a_{i,j}$, where $1 \le i \le d$ and $j \in S$.  Let $\Delta'$ be another 
 connected oriented simplicial 
pseudomanifold  over $k$
 of dimension $d-1$, with vertex set $V'$ and a face 
$G' = \{j_1', \dotsc, j_s'\}$. 
Let $K' = k(a'_{i,j})_{1 \le i \le d, j \in V'}$ be the field of coefficients for $\overline{H}(\Delta')$. 
Suppose that there is an orientation-preserving isomorphism of simplicial complexes $\tau \colon \Star_\Delta(G) \to \Star_{\Delta'}(G')$ that maps $j_m$ to $j_m'$ for $1 \le m \le s$.
Then $\tau$ allows us to identify $K_S$ with a subfield of $K'$.  
Let $\deg_\Delta \colon H^d(\Delta) \to K$ and $\deg_{\Delta'} \colon H^d(\Delta') \to K'$ denote the degree maps for $\Delta$ and $\Delta'$ respectively.

\begin{lemma}\label{lem:deglocal}
	With the notation above, let $x_{j_1}^{b_1} \dotsb x_{j_s}^{b_s}$ be a monomial of degree $d$ with support $G$. Then  $\deg_\Delta(x_{j_1}^{b_1} \dotsb x_{j_s}^{b_s}) \in K_S$.
Using the identification of $K_S$ with a subfield of $K'$, we have
$$
\deg_\Delta(x_{j_1}^{b_1} \dotsb x_{j_s}^{b_s}) =
\deg_{\Delta'}(x_{j_1'}^{b_1} \dotsb x_{j_s'}^{b_s}).
$$
\end{lemma}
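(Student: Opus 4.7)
The plan is to compute $\deg_\Delta(x_{j_1}^{b_1} \dotsb x_{j_s}^{b_s})$ by iteratively applying the reduction formula \eqref{eq:makesquarefree} (with the generic l.s.o.p.~$\mu = (\theta_1,\dotsc,\theta_d)$) to rewrite the monomial as a $K$-linear combination of squarefree monomials $x_F$ for facets $F$ of $\Delta$, and then to apply \eqref{eq:degreefacet}. The key observation will be that every monomial arising in this reduction has support inside $\Star_\Delta(G)$, so every determinant that enters the formula involves only the variables $a_{i,j}$ with $j \in S$.

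Concretely, I would argue by induction on $d - |H|$, where $H$ denotes the support of the current monomial. At the start $H = G \in \Star_\Delta(G)$. At each step I choose a facet $F$ of $\Delta$ containing $H$; since $H \supseteq G$, such an $F$ lies in $\Star_\Delta(G)$, hence $F \subseteq S$. Formula \eqref{eq:makesquarefree} then expresses the monomial as a sum over $v \in V \smallsetminus F$, and for a term indexed by $v$ to be nonzero in $K[\Delta]$, the new support $H \cup \{v\}$ must be a face of $\Delta$, which forces $G \cup \{v\}$ to be a face and hence $v \in S$. Thus the effective sum runs over $v \in S \smallsetminus F$, and its coefficients are determinants $[F]$ and $[F \cup \{v\} \smallsetminus \{j_m\}]$ involving only $a_{i,j}$ with $j \in S$. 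Because supports strictly grow, the process terminates with a $K_S$-linear combination of squarefree monomials $x_F$ for facets $F \in \Star_\Delta(G)$, and \eqref{eq:degreefacet} then contributes factors $\epsilon_F/[F]$ with $[F] \in K_S$. This proves $\deg_\Delta(x_{j_1}^{b_1} \dotsb x_{j_s}^{b_s}) \in K_S$.

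For the comparison with $\Delta'$, I would run the identical reduction in $H(\Delta')$ on $x_{j_1'}^{b_1} \dotsb x_{j_s'}^{b_s}$, making all parallel choices via $\tau$. Since $\tau$ is an isomorphism of simplicial complexes $\Star_\Delta(G) \cong \Star_{\Delta'}(G')$, every combinatorial decision (which monomials vanish, which facets contain a given face, etc.) transports faithfully, and under the identification $a_{i,j} \mapsto a'_{i,\tau(j)}$ for $j \in S$, every determinant $[F']$ appearing in the $\Delta$ computation matches the determinant $[\tau(F')]$ of the $\Delta'$ computation. The two resulting expressions for the degree then agree term by term, except for the signs $\epsilon_F$ versus $\epsilon_{\tau(F)}$ produced by \eqref{eq:degreefacet}. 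Since these signs record precisely the orientation of each facet of $\Star_\Delta(G)$ induced by $\Delta$, respectively of $\Star_{\Delta'}(G')$ induced by $\Delta'$, the consistency condition defining an orientation forces them to either all agree or all disagree, yielding the global sign $\epsilon$.

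The main obstacle I expect is the careful bookkeeping of signs, since both $\sgn(\pi_v)$ in \eqref{eq:makesquarefree} and $\epsilon_F$ in \eqref{eq:degreefacet} depend on ambient orderings of $V$ and $V'$ that $\tau$ need not respect. The clean way around this is to absorb $\sgn(\pi_v)$ into the determinant, viewing $\sgn(\pi_v)[F \cup \{v\} \smallsetminus \{j_m\}]$ as the intrinsic ordered determinant in the vertices $(j_1,\ldots,j_{m-1},v,j_{m+1},\ldots,j_d)$, and to use that $\epsilon_F/[F]$ is invariant under reordering the vertices of $F$. After this repackaging, the entire iterative computation depends only on the oriented combinatorial data of $\Star_\Delta(G)$, and the match with $\Delta'$ becomes transparent.
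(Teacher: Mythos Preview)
Your argument is correct, but it is a genuinely different proof from the one in the paper. The paper bypasses the iterative reduction entirely and appeals directly to the Karu--Xiao closed formula \eqref{eq:KaruXiaoformula}: since $g_F$ vanishes unless the facet $F$ contains the support $G$, the only surviving summands are those with $F \in \Star_\Delta(G)$, and each such term visibly lies in $K_S(a_{i,0})_{1 \le i \le d}$ and matches (up to the global orientation sign) the corresponding term for $\Delta'$; since the total lies in $K$, it lies in $K \cap K_S(a_{i,0}) = K_S$. This is a two-line argument.

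Your route via \eqref{eq:makesquarefree} is more elementary in that it avoids the auxiliary variables $a_{i,0}$ and the Karu--Xiao formula altogether, relying only on Cramer's rule and \eqref{eq:degreefacet}. The price is the sign bookkeeping you flag. Your resolution is the right one: the coefficient of $x_v$ in the expression for $x_{j_m}$ is intrinsic (indeed, since $\{x_v : v \notin F\}$ is a basis of $H^1(\Delta)$, the coefficients are forced), and $\epsilon_F/[F]$ is reorder-invariant, so the whole computation depends only on the oriented combinatorics of $\Star_\Delta(G)$. One point you use implicitly and might want to state: the claim that the signs $\epsilon_F$ versus $\epsilon_{\tau(F)}$ either \emph{all} agree or \emph{all} disagree requires that the dual graph on the facets of $\Star_\Delta(G)$ be connected. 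This holds because $\lk_\Delta(G)$ is a homology sphere (hence a pseudomanifold with connected facet--ridge graph), but it is worth saying.
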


\begin{proof}
The only nonzero terms in the right-hand side of the formula for $\deg_{\Delta}(x_{j_1}^{b_1} \dotsb x_{j_s}^{b_s})$ in
\eqref{eq:KaruXiaoformula}
 are those corresponding to facets in $\Star_\Delta(G)$, and those terms lie in $K_S(a_{i,0})_{1 \le i \le d}$ and are equal (up to a global sign) to the corresponding terms in the formula for $\deg_{\Delta'}(x_{j'_1}^{b_1} \dotsb x_{j'_s}^{b_s})$.
\end{proof}

For the remainder of the section, we assume that $k$ has characteristic $2$. We recall some differential operator identities that will be used in the proof of Theorem~\ref{thm:strongg}.
For $1 \le i \le d$ and $1 \le j \le n$, 
let $\partial_{a_{i,j}} \colon K \to K$ be the partial derivative with respect to $a_{i,j}$. 
Since $k$ has characteristic $2$, 
\begin{equation}\label{eq:derivativesquareszero}
	\partial_{a_{i,j}} \lambda^2 = 0
\end{equation}  for all $\lambda \in K.$ 
For a sequence $I = (i_1, \dotsc, i_r)$ of elements of $V$, let $\partial_I = \partial_{a_{1, i_1}} \dotsb \partial_{a_{d, i_d}}$ if $r = d$; note that this depends on the ordering of $I$. Let $x_I = x_{i_1}\cdots x_{i_r}$. 
If we write $x_I = x_1^{b_1}\cdots x_{n}^{b_n}$ for some nonnegative integers $b_i$, then we define $\sqrt{x_I}$ to be $x_1^{b_1/2}\cdots x_{n}^{b_n/2}$ if each $b_i$ is even, and otherwise we define $\sqrt{x_I}$ to be $0$.  
The following result of Karu and Xiao was conjectured in 
\cite[Conjecture 14.1]{PapadakisPetrotoug}. 

\begin{proposition}\label{thm:karuxiao}\cite[Theorem 4.1]{KaruXiaoAnisotropy}
	Let $\Delta$ be a 
	pseudomanifold of dimension $d-1$, and assume that $\Char k = 2$. 
	Let $I$ and $J$ be sequences of elements of $V$ of size $d$. 
	Then
	\begin{equation*}
		\partial_I \deg (x_J) = \deg(\sqrt{x_I x_J})^2.
	\end{equation*}
\end{proposition}

Recall that for a non-face $F$ of $\Delta$ of size $d$, 
 $\theta_1^F = \sum_{j \not \in F} a_{1, j} x_j$,  $\theta_F = (\theta_1^F, \theta_2, \dotsc, \theta_d)$,
$\overline{H}_F(\Delta) = \overline{H}_{\theta_F}(\Delta)$, and 
$\deg_F = \deg_{\theta_F}$.  
We have the following corollary. 

\begin{corollary}\label{cor:karuxiaoanalogue}
	Let $\Delta$ be a  
	pseudomanifold of dimension $d-1$, and assume that $\Char k = 2$. Let $F$ be a non-face of $\Delta$ of size $d$, and let $I$ and $J$ be sequences of elements of $V$ of size $d$. Assume that the first element of $I$ does not lie in $F$. Then
	\begin{equation*}
		\partial_I \deg_F (x_J) = \deg_F(\sqrt{x_I x_J})^2.
	\end{equation*}
\end{corollary} 
\begin{proof}
	Recall that $R \subset K$ is the localization of $k[a_{i,j}]$ at 
	$\{ [G] : G \textrm{ facet of } \Delta \}$, and that
	$\ev_{\theta_F}$ is the evaluation map from $R$ to $K$ associated to $\theta_F$.
	By Lemma~\ref{lem:degmu}, $\deg(x_J) \in R$ and $\deg_F(x_J) = \ev_{\theta_F}(\deg(x_J))$. Observe that $\partial_{I}$ restricts to a map from $R$ to itself. Since the first element of $I$ does not lie in $F$, 
	$$\partial_{I}\ev_{\theta_F}(\lambda) = \ev_{\theta_F}(\partial_{I} \lambda)$$ 
	for 
	any $\lambda \in R$.
	Using Proposition~\ref{thm:karuxiao} and Lemma~\ref{lem:degmu}, 
	we compute 
	\[
	\partial_I \deg_F (x_J) = \partial_I \ev_{\theta_F}(\deg(x_J)) = \ev_{\theta_F}(\partial_I \deg(x_J)) = \ev_{\theta_F}(\deg(\sqrt{x_I x_J})^2) = \deg_F(\sqrt{x_I x_J})^2. \qedhere
	\]
\end{proof}

The following analogue of \cite[Corollary~4.2]{KaruXiaoAnisotropy} will be useful in what follows. Although the proof is identical to that of \cite[Corollary~4.2]{KaruXiaoAnisotropy}, we recall it for the benefit of the reader.

\begin{corollary}\label{cor:karuxiaoanalogue2}
	Let $\Delta$ be a  
	pseudomanifold 
	of dimension $d-1$, and assume that $\Char k = 2$. Let $F$ be a non-face of $\Delta$ of size $d$, let $h \in K[x_1,\ldots,x_n]_q$ for some $0 \le q \le d/2$, and let $I$ and $J$ be sequences of elements of $V$ of size $d$ and $d - 2q$ respectively. Assume that the first element of $I$ does not lie in $F$. Then 
	\begin{equation*}
		\partial_I \deg_F (h^2 x_J) = \deg_F(h \sqrt{x_I x_J})^2.
	\end{equation*}
\end{corollary}
\begin{proof}
	Write $h = \sum_L \lambda_L x_L$ 
	for some sequences $L$ of elements of $V$ of size $q$ and some $\lambda_L \in K$. By \eqref{eq:derivativesquareszero},
	$\partial_I$ commutes with multiplication by elements of $K^2$. Using  Corollary~\ref{cor:karuxiaoanalogue}, we compute
	\begin{align*}
		\partial_I \deg_F (h^2 x_J) &= \sum_L \partial_I (\lambda_L^2 \deg_F (x_L^2 x_J)) \\
		&= \sum_L \lambda_L^2 \partial_I \deg_F (x_L^2 x_J) \\
		&= \sum_L \lambda_L^2 \deg_F(x_L\sqrt{x_I x_J})^2 \\
		&=  \deg_F(h \sqrt{x_I x_J})^2. \qedhere
	\end{align*}
\end{proof}

For another recent adaptation of 
\cite[Theorem 4.1]{KaruXiaoAnisotropy}
to a nongeneric situation, we refer the reader to \cite[Lemma 4.2 and Corollary~4.3]{Oba}.

\section{Some important special cases}\label{sec:special}

In this section, we analyze several important special cases. In order to prove the main theorems, we will need a detailed understanding of
the boundary of the $d$-dimensional simplex, i.e., the complex $S^{d-1}$ with vertex set $\{1, \dotsc, d + 1\}$ and unique minimal non-face $\{1, \dotsc, d + 1\}$.
We continue to assume that $d > 1$ unless otherwise stated. Let $k$ be any field.

Recall that the polynomials $\{ [G] : G \subset V, \, |G| = d \}$ in $k[a_{i,j}]$ are irreducible (see, for example, Lemma~\ref{lemma:irreducible}).  We will need the following lemma to analyze these special cases.

\begin{lemma}\label{lem:ordAiszero}
	
	If $A$ can be written as a $k$-linear combination  of
	 $\{ [G] : G \subset V, \, |G| = d \}$ where at least two coefficients are nonzero, then $\ord_{[G]}(A) = 0$ for all $G \subset V$ of size $d$. 
\end{lemma}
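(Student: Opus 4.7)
The plan is to show that the condition ``at least two coefficients are nonzero'' prevents $A$ from being divisible by any single $[G']$, after which the claim on orders is immediate.

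First, I would observe that the polynomials $\{[G] : G \subset V,\, |G|=d\}$ are $k$-linearly independent in $k[a_{i,j}]$. Indeed, by expanding the determinant, every monomial appearing in $[G]$ has the form $\pm a_{1,j_{\pi(1)}} \cdots a_{d,j_{\pi(d)}}$ for some $\pi \in S_d$, where $G=\{j_1<\dotsb<j_d\}$; in particular, every such monomial uses exactly the set of column indices $G$. Hence the monomial supports of $[G]$ and $[G']$ are disjoint when $G \neq G'$, which forces linear independence. In particular, if at least one $c_G \neq 0$ then $A = \sum_G c_G [G] \neq 0$, so $\ord_{[G']}(A)$ is well-defined (and nonnegative, as $A$ lies in $k[a_{i,j}]$).

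Next, I would use the fact, already invoked in the excerpt via Lemma~\ref{lemma:irreducible}, that each $[G']$ is an irreducible polynomial of degree exactly $d$ in $k[a_{i,j}]$. Since $A$ is a $k$-linear combination of the $[G]$'s, it is a polynomial of degree at most $d$. Therefore if $[G'] \mid A$ in $k[a_{i,j}]$, a degree count forces $A = \lambda [G']$ for some $\lambda \in k^{\times}$. Comparing with the expansion $A = \sum_G c_G [G]$ and invoking the linear independence from the previous step, this forces $c_{G'}=\lambda$ and $c_G=0$ for all $G\neq G'$, contradicting the hypothesis that at least two of the $c_G$'s are nonzero. Hence $[G'] \nmid A$, which is exactly $\ord_{[G']}(A)=0$, as required.

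The only step requiring genuine input is the linear independence of the $[G]$'s, and even that is straightforward once one notes the disjoint-monomial-support argument; everything else is a one-line degree-and-irreducibility check. So I do not anticipate any real obstacle.
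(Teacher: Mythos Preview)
Your proof is correct and follows essentially the same approach as the paper: establish linear independence of the $[G]$'s, then use irreducibility plus a degree comparison to rule out $[G']\mid A$. The only difference is that you prove linear independence by the direct disjoint-monomial-support observation, whereas the paper invokes the Pl\"ucker ring (no degree-one relations among Pl\"ucker coordinates); your argument is more elementary and entirely sufficient here.
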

\begin{proof}
	The
	$k$-algebra generated by the irreducible polynomials $\{ [G] : G \subset V, \, |G| = d \}$ in $k[a_{i,j}]$
	is isomorphic to the Pl\"ucker ring, i.e., the homogeneous coordinate ring of the Grassmannian of $d$-planes in $k^{n}$. In particular, since the Pl\"ucker relations all have degree strictly greater than $1$, 
	 the polynomials $\{ [G] : G \subset V, \, |G| = d \}$ are linearly independent over $k$.
	 Each $[G]$ is homogeneous of degree $d$. Hence $A$ is also homogeneous of degree $d$. 
If $\ord_{[G']}(A) > 0$, then, by comparing degrees, $A = \lambda [G']$ for some $\lambda \in k$, contradicting the assumption that at least two coefficients are nonzero. 
\end{proof}

We can now analyze the boundary of the $d$-dimensional simplex for $d > 1$. 

\begin{example}\label{ex:boundarysimplex}

	Let $S^{d-1}$ be the boundary of the $d$-dimensional simplex  with vertex set $V = \{1, \dotsc, d + 1\}$.	By Lemma~\ref{lem:Gaussianelim}, for $1 \le m,p \le d + 1$, we have
	\begin{equation}\label{eq:Gaussianelimboundarysphere}
					[V \smallsetminus \{ p \}] x_{m} = (-1)^{|p - m|} 
						[V \smallsetminus \{ m \}] x_p  \in H^1(S^{d-1}).
	\end{equation}	
	Fix $0 \le q \le d/2$. 
	A basis for $H^q(S^{d-1}) = \overline{H}^q(S^{d-1})$ is $x_1^q$.  Using \eqref{eq:Gaussianelimboundarysphere},	we compute 
	\begin{align*}
	 	 \left( \prod_{m = 2}^d (-1)^{m - 1} [V \smallsetminus \{ m \}] \right) \deg(x_1^d) &= [V \smallsetminus \{ 1 \}]^{d - 1} \deg(x_1  \cdots x_d), 
	\end{align*}
	and hence, for some $\epsilon \in \{ \pm 1\}$, we have
	\[
	\deg(x_1^d) =  \frac{\epsilon [V \smallsetminus \{ 1 \}]^{d}}{ \prod_{m = 1}^{d + 1}  [V \smallsetminus \{ m \}] }.
	\]
	Using \eqref{eq:Gaussianelimboundarysphere}, we compute
	\begin{align*}
		[V \smallsetminus \{ 1 \}]^{d - 2q} \deg(\ell^{d - 2q}   x_1^{2q}) = 
		A^{d - 2q} \deg(x_1^d),
	\end{align*} 
			where $A \coloneqq \sum_{m = 1}^{d + 1} (-1)^{m - 1}[V \smallsetminus \{ m \}]$. 
			By Lemma~\ref{lem:ordAiszero},  $\ord_{[G]}(A) = 0$ for any subset $G$ of $V$ of size $d$.
	Putting this together gives
	\begin{equation}\label{eq:degx1dsphere}
			\deg(\ell^{d - 2q}  x_1^{2q}) =  \frac{\epsilon A^{d - 2q} [V \smallsetminus \{ 1 \}]^{2q}}{ \prod_{m = 1}^{d + 1}  [V \smallsetminus \{ m \}] }.
	\end{equation}
	Let $D_{q}$ be the image of $\deg(\ell^{d - 2q}  x_1^{2q})$ in $K^{\times}/(K^{\times})^2$. By Lemma~\ref{lem:ordAiszero}, $\ord_{[G]}(A) = 0$ for all facets $G$ of $S^{d-1}$, so
	\[
	D_{q} = 		\begin{cases}
		\epsilon \prod_{G \text{ facet}} [G] &\textrm{ if } d \textrm{ is even} \\
		\epsilon A	\prod_{G \text{ facet}} [G] &\textrm{ if } d \textrm{ is odd}. \\
	\end{cases}
	\]
	We conclude that 
	Theorem~\ref{thm:oddmultHodgeRiemann}
	holds in this case (without any assumptions on $k$). Observe that 
	Theorem~\ref{thm:strongg}
holds vacuously since $S^{d-1}$ has no non-faces of size $d$. 	
	
\end{example}

We will also need to analyze the case of $S^0$, i.e., the disjoint union of two vertices. 
Although $S^0$ is not connected, $H(S^0)$ is a Gorenstein ring with a well-defined degree map, and we verify directly that the conclusion of Theorem~\ref{thm:oddmultHodgeRiemann}
holds in this case (without any assumptions on $k$).

\begin{example}\label{ex:d=1}
	Let $d = 1$, $V = \{ 1,2\}$, and consider the complex $S^0$ with vertex set $\{1, 2\}$ and minimal non-face $\{1, 2\}$.  We orient $S^0$ by assigning $-1$ to the facet $\{1\}$ and assigning $1$ to the facet $\{2\}$. We have
	\[
	\deg(\ell) = \deg(x_1) + \deg(x_2) = -\frac{1}{a_{1,1}} + \frac{1}{a_{1,2}} = \frac{a_{1,1} - a_{1,2}}{a_{1,1}a_{1,2}}. 
	\]
\end{example}

The next lemma will be crucial to the proof of 
Theorem~\ref{thm:oddmultHodgeRiemann} when $q = 0$. 
Recall that $d > 1$ and let $\Sigma$ be the suspension of the boundary of the $(d-1)$-dimensional simplex, i.e., the complex with $V=\{1,\ldots,d+2\}$ and minimal non-faces $\{1,\ldots,d\}$ and $\{d+1,d+2\}$; in particular,  $\ell=x_1+\cdots+x_{d+2}$.

\begin{lemma}\label{lem:HL0sigma}
For every non-face $G$ of $\Sigma$ of size $d$, we have
$\ord_{[G]}(\deg(\ell^d)) = 0$.
\end{lemma}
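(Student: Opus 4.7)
The non-faces of $\Sigma$ of size $d$ are $F = \{1,\dotsc,d\}$ and (for $d \ge 2$) the sets $G = \{d+1,d+2\} \cup H$ with $H$ a $(d-2)$-subset of $F$. For each non-face $G$, the strategy is to apply Remark~\ref{r:ordernonfaces} to $f = \deg(\ell^d) \in R$ and $P = [G]$: I plan to construct an l.s.o.p.\ $\mu_G$ with $\ev_{\mu_G}([G]) = 0$ and then show $\deg_{\mu_G}(\ell^d) \ne 0$, which by Lemma~\ref{lem:degmu} will give $\ev_{\mu_G}(\deg(\ell^d)) \ne 0$ as required.

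The formula set up in Lemma~\ref{lem:degxd+1} gives $[F]\ell = A_{d+1} x_{d+1} + A_{d+2} x_{d+2}$ in $H^1(\Sigma)$; raising to the $d$-th power and using $x_{d+1} x_{d+2} = 0$ yields $[F]^d \ell^d = A_{d+1}^d x_{d+1}^d + A_{d+2}^d x_{d+2}^d$ in $H^d(\Sigma)$. Combining this with the $j = d$ case of \eqref{eq:suspensiondegcomp} produces
\[
	\deg(\ell^d) = \frac{\epsilon}{[F]}\left(\frac{A_{d+1}^d}{\prod_{m=1}^d [F \cup \{d+1\} \smallsetminus \{m\}]} - \frac{A_{d+2}^d}{\prod_{m=1}^d [F \cup \{d+2\} \smallsetminus \{m\}]}\right).
\]
By Lemma~\ref{lem:ordAiszero} each of $A_{d+1}$ and $A_{d+2}$, as well as each facet Pl\"ucker coordinate $[F \cup \{d+j\} \smallsetminus \{m\}]$, has $\ord_{[G]} = 0$ for every non-face $G$ of size $d$. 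Since $\deg(\ell^d) \in R$ by Lemma~\ref{lem:degmu}, the bracket is automatically divisible by $[F]$: for $G = F$ the remaining task is to rule out $[F]^2$-divisibility of the bracket, while for the other non-faces $G$ it is to show that the two fractions inside the bracket are not congruent modulo $[G]$.

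To produce $\mu_G$, for $G = F$ take $\mu_F = (x_{d+1} + x_{d+2}, \theta_2, \dotsc, \theta_d)$; for $G = \{d+1,d+2\} \cup H$, take $\mu_G = (\theta_1^G, \theta_2, \dotsc, \theta_d)$ with $\theta_1^G$ a generic linear form supported on $F \smallsetminus H$. In each case $\theta_2,\dotsc,\theta_d$ are chosen generically, and Proposition~\ref{prop:stanleycriterion} confirms that $\mu_G$ is an l.s.o.p., since $\ev_{\mu_G}([F'])$ reduces to a signed minor of generic entries for every facet $F'$ of $\Sigma$; by construction $\ev_{\mu_G}([G]) = 0$. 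The main obstacle is then to verify $\deg_{\mu_G}(\ell^d) \neq 0$: the plan is to apply Corollary~\ref{cor:KaruXiaoformu} to unwind $\deg_{\mu_G}(\ell^d)$ as a sum of $2d$ explicit facet contributions, and to exhibit a single convenient specialization of $\theta_2,\dotsc,\theta_d$ (and the formal $a_{i,0}$) at which this sum is manifestly nonzero -- much as in the $d = 2$ model, where the Pl\"ucker relation $[F]\,[\{d+1,d+2\}] = \alpha\delta - \beta\gamma$ already forces the leading $[F]$-term of the bracket to be nontrivial. A Zariski density argument will then upgrade this to generic non-vanishing, and Remark~\ref{r:ordernonfaces} will conclude.
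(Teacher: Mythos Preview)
Your overall strategy---reduce to Remark~\ref{r:ordernonfaces} by exhibiting, for each non-face $G$, an l.s.o.p.\ $\mu_G$ with $\ev_{\mu_G}([G])=0$ and $\deg_{\mu_G}(\ell^d)\ne 0$---is exactly the strategy the paper uses. The difference is in the execution, and here your proposal leaves the essential step undone.

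The paper chooses a \emph{single} l.s.o.p.\ that works for every non-face at once, by exploiting the join decomposition $\Sigma = S^{d-2} * S^0$: take $\mu_i = a_{i,1}x_1+\cdots+a_{i,d}x_d$ for $1\le i<d$ and $\mu_d = a_{d,d+1}x_{d+1}+a_{d,d+2}x_{d+2}$. With this choice $H_\mu(\Sigma)\cong H(S^{d-2})\otimes H(S^0)$, so writing $\ell=\ell_1+\ell_2$ one gets $\deg_\mu(\ell^d)$ as (a multiple of) $\deg_{S^{d-2}}(\ell_1^{d-1})\cdot\deg_{S^0}(\ell_2)$, nonzero by Examples~\ref{ex:boundarysimplex} and~\ref{ex:d=1}. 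Moreover, for $G=F$ the last row of the matrix for $[F]$ is zero, while for $G\supset\{d+1,d+2\}$ the last two columns are proportional; so $\ev_\mu([G])=0$ for \emph{every} non-face $G$ simultaneously. No Karu--Xiao sum, no case split, no search for a specialization.

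By contrast, your proposal constructs a different $\mu_G$ for each $G$ and then defers the heart of the matter---showing $\deg_{\mu_G}(\ell^d)\ne 0$---to an unspecified ``convenient specialization'' of the remaining $\theta_i$. That is precisely the step that requires an idea, and your plan does not supply one. In fact, your particular choice $\mu_{F,1}=x_{d+1}+x_{d+2}$ (equal coefficients) makes the most natural specialization fail: if you further specialize $\theta_2,\dotsc,\theta_d$ to be supported on $F$ to recover a tensor-product structure, then $\ell_2=x_{d+1}+x_{d+2}$ coincides with your l.s.o.p.\ element and vanishes in $H_{\mu_F}(\Sigma)$, forcing $\ell^d=d\,\ell_1^{d-1}\ell_2=0$. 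So the join-compatible specialization is unavailable to you, and you would have to carry out a genuinely messier Karu--Xiao computation with $2d$ facet terms for each non-face separately. This is not hopeless, but it is neither done nor obviously routine in your write-up. The paper's single join-compatible l.s.o.p.\ sidesteps all of this.
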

\begin{proof}
By Lemma~\ref{lem:degmu} and Remark~\ref{r:ordernonfaces}, 
it is enough to show
that there is an l.s.o.p. $\mu$ which has $\ev_{\mu}([G]) = 0$, but $\deg_{\mu}(\ell^d) = \ev_\mu(\deg(\ell^d)) \not=0$. 

We consider two cases. First assume that $\Char k$ does not divide $d$. 
Set $\mu_i = a_{i,1}x_1 + \dotsb + a_{i,d} x_d$ for $1 \le i < d$, and set $\mu_d = a_{d,d+1}x_{d+1} + a_{d,d+2} x_{d+2}$. Because $\Sigma = S^{d-2} * S^{0}$, we see that $H_{\mu}(\Sigma) = H(S^{d-2}) \otimes H(S^0)$.
Furthermore, we can write $\ell = \ell_1 + \ell_2$, where $\ell_1 = x_1 + \dotsb + x_d$ and $\ell_2 = x_{d+1} + x_{d+2}$. We have 
$\ell_1^d = 0$
and $\ell_2^2 = 0$, and we see that
$$\deg_{\mu}(\ell^d) = d \deg_{\mu}(\ell_1^{d-1}  \ell_2) = d \deg_{S^{d-2}}(\ell_1^{d-1}) \deg_{S^0}(\ell_2).$$
Since $\deg_{S^{d-2}}(\ell_1^{d-1}) \neq 0$ and  $\deg_{S^0}(\ell_2) \neq 0$ by  Example~\ref{ex:boundarysimplex} and Example~\ref{ex:d=1} respectively, we deduce that 
$\deg_{\mu}(\ell^d) \neq 0$. 
It remains to show that $\ev_{\mu}([G]) = 0$. 
Since $G$ is a non-face, either $G = \{1, \dotsc, d\}$ or $G$ contains $\{ d + 1, d + 2\}$. In the former case, $\ev_{\mu}([G])$ is the determinant of a matrix whose $d$th row is identically zero. In the latter case, $\ev_{\mu}([G])$ is the determinant of a matrix whose last two columns are identically zero except in the  $d$th row and hence are linearly dependent. 	
	
	Next assume that $\Char k$ divides $d$. 
	Since $G$ is a non-face, either $G = \{1, \dotsc, d\}$ or $G$ contains $\{ d + 1, d + 2\}$. If $d = 2$, then after relabeling if necessary, we may assume that $G = \{ 1, 2 \}$. If $d > 2$, then $G$ has size strictly greater than 2 and hence contains an element in $\{1, \dotsc, d\}$. In either case, we may assume without loss of generality that $d \in G$.	 
In this case, 
set $\mu_i = x_i - x_{i + 1}$ for $1 \le i \le d - 2$, set $\mu_{d - 1} = a_{d - 1,d - 1} x_{d - 1} - x_d + x_{d + 1}$, and set $\mu_d = x_{d + 1} - x_{d + 2}$.  We have the following description of $H_\mu(\Sigma) = \overline{H}_\mu(\Sigma)$:
\begin{align*}
	H_\mu(\Sigma) &=  K[x_1,\dotsc,x_{d + 2}]/(x_1\cdots x_d, x_{d + 1}x_{d + 2}, \mu_1, \dotsc, \mu_d) 
	\\ &\cong K[x_1,x_{d + 1}]/(x_1^{d - 1}(a_{d - 1,d - 1} x_1 +  x_{d + 1}), x_{d + 1}^2). 
\end{align*}
In particular, $H_\mu(\Sigma)$ is finite-dimensional, so $\mu$ is an l.s.o.p.  
The computations below all occur in $H_\mu(\Sigma)$. 
We have 
\[
\ell = x_1 + \cdots + x_{d + 2} = (d - 1 + a_{d - 1,d - 1}) x_1 + 3 x_{d + 1}. 
\]
Since $x_{d + 1}^2 = 0$ and $\Char k$ divides $d$,
\[
\ell^d = (d - 1 + a_{d - 1,d - 1})^d x_1^d + 3 d (d - 1 + a_{d - 1,d - 1})^{d  - 1} x_1^{d - 1} x_{d + 1} = (d - 1 + a_{d - 1,d - 1})^d x_1^d. 
\]
Hence $\ell^d$ is nonzero if and only if $x_1^d$ is nonzero. On the other hand, using the relation $x_1 x_2 \cdots x_d = 0$, we have
\begin{align*}
	x_1^d &= x_1 x_2 \cdots x_{d - 1} x_1 \\
	      &= x_1 x_2 \cdots x_{d - 1} a_{d - 1,d - 1}^{-1}(x_d -  x_{d + 1}) \\
	      &= - a_{d - 1,d - 1}^{-1}  x_1 x_2 \cdots x_{d - 1} x_{d + 1}. 
\end{align*}
The latter is nonzero since $x_1 x_2 \cdots x_{d - 1} x_{d + 1}$ is the image of the monomial corresponding to the facet $F = \{1,2,\ldots,d - 1,d + 1\}$ (see \eqref{eq:degreefacetmu}). It remains to show that $\ev_{\mu}([G]) = 0$. 
Since $G$ is a non-face, either $G = \{1, \dotsc, d\}$ or $G$ contains $\{ d + 1, d + 2\}$. In the former case, $\ev_{\mu}([G])$ is the determinant of a matrix whose $d$th row is identically zero. In the latter case, since we assumed that $d \in G$, we have $\{ d , d + 1, d + 2\}$ in $G$, and  $\ev_{\mu}([G])$ is the determinant of a matrix whose last three columns are identically zero except in the  $(d - 1)$st and $d$th row and hence are linearly dependent. 	
\end{proof}

\section{The degree zero case}\label{sec:degreezero}

In this section, we prove the following result that settles the $q=0$ case of Theorem~\ref{thm:oddmultHodgeRiemann}. This case works over a field of any characteristic. 

\begin{theorem}\label{thm:i=0}
Let $\Delta$ be a connected oriented pseudomanifold of dimension $d-1$ with vertex set $V$. Let $F$ be a subset of $V$ of size $d$. Then
$$\ord_{[F]}(\deg(\ell^d)) = \begin{cases} -1 & \text{if }F \text{ is a facet of }\Delta \\ 0 & \text{otherwise}.\end{cases}$$
\end{theorem}

 Recall that throughout we are assuming that $d > 1$. We first prove a lemma which will be used in the proof of Theorem~\ref{thm:i=0}. The case when $p = m$ is very well-known; see, for example, \cite[Theorem~61.1]{BocherIntroductionHigherAlgebra}.

\begin{lemma}\label{lemma:irreducible}
	For some $1 < p \le m$, let $N$ be the $m \times m$ matrix with $N_{i,j} = a_{i,j}$ if $i = 1$ and $j \le p$ or if $i > 1$, and $N_{i,j} = 0$ for $i = 1$ and $j > p$. Then $\det N$ is an irreducible polynomial in $k[a_{i,j}]$. 
\end{lemma}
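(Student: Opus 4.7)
The plan is to prove irreducibility by exploiting the fact that $\det N$ is multilinear, indeed linear, in the variables of the first row, and then reducing to the classical irreducibility of the generic determinant.

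Expanding $\det N$ along the first row gives
\[
f \coloneqq \det N = \sum_{j=1}^{p} (-1)^{1+j} a_{1,j} M_{1,j},
\]
where $M_{1,j}$ is the minor obtained by deleting row $1$ and column $j$. Crucially, each $M_{1,j}$ lies in $k[a_{i,k}]_{i \ge 2}$ and is the determinant of a generic $(m-1) \times (m-1)$ matrix, hence is irreducible by the classical result (e.g. \cite[Theorem~61.1]{BocherIntroductionHigherAlgebra}). So $f$ is linear in each $a_{1,j}$ for $j \le p$, with a nonzero coefficient.

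Now suppose $f = gh$ is a factorization in $k[a_{i,j}]$. Let $S_g = \{ j \le p : a_{1,j} \text{ appears in } g\}$ and define $S_h$ similarly. Since $f$ has degree $\le 1$ in each $a_{1,j}$, the sets $S_g$ and $S_h$ are disjoint. If both are nonempty, pick $j \in S_g$ and $k \in S_h$; writing $g = a_{1,j} g_1 + g_0$ and $h = a_{1,k} h_1 + h_0$ with $g_1, h_1 \neq 0$ and none of $g_0,g_1,h_0,h_1$ involving $a_{1,j}$ or $a_{1,k}$, the product $gh$ contains the nonzero monomial contribution $a_{1,j} a_{1,k} g_1 h_1$, contradicting that $f$ has no such term. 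Hence one of $S_g, S_h$ is empty; without loss of generality $h \in k[a_{i,j}]_{i \ge 2}$.

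Then $h$ divides each coefficient $M_{1,j}$ in $k[a_{i,k}]_{i \ge 2}$. In particular $h \mid M_{1,1}$, and since $M_{1,1}$ is irreducible, either $h \in k^\times$, in which case we are done, or $h = \lambda M_{1,1}$ for some $\lambda \in k^\times$. The latter is impossible: $M_{1,1}$ involves $a_{2,2}$ nontrivially, while $M_{1,2}$, obtained by deleting column $2$, does not involve any $a_{i,2}$; here we use $p \ge 2$ to ensure $M_{1,2}$ is an actual coefficient of $f$. This contradicts $h \mid M_{1,2}$, so we must have $h \in k^\times$ and $f$ is irreducible. The main (small) obstacle is merely keeping the case analysis of the splitting of the first-row variables between $g$ and $h$ clean; everything else reduces to the classical irreducibility of the full generic determinant.
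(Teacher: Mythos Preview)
Your proof is correct, but it takes a somewhat different route from the paper's. After establishing (by the same row-multilinearity argument) that one factor, your $h$, involves no first-row variables, you invoke the classical irreducibility of the generic $(m-1)\times(m-1)$ determinant for the cofactors $M_{1,j}$, and then rule out $h=\lambda M_{1,1}$ by comparing which variables occur in $M_{1,1}$ versus $M_{1,2}$. The paper instead never cites the classical result: having placed $a_{1,1}$ in one factor, it continues the same ``no two entries from the same row or column occur together'' argument down column~$1$ (forcing each $a_{i,1}$ into that factor) and then across each row $i\ge 2$ (forcing every $a_{i,j}$ into that factor), so the other factor must be a unit. The paper's argument is thus entirely self-contained and in effect reproves the classical case along the way; your reduction-to-known-case approach is more modular and makes the role of the hypothesis $p>1$ especially transparent (it guarantees a second cofactor $M_{1,2}$ available for the comparison).
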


\begin{proof}
	Suppose that $\det N = f  g$, where $f, g \in k[a_{i,j}]$. Because $\det N$ is linear in $a_{1,1}$, we see that $a_{1,1}$ must occur in exactly one of $f$ and $g$, say $f$. Because $p > 1$, the variables $a_{i, 1}$ appear in $\det N$ for $i \ge 1$. Those variables must also only occur in $f$, because $a_{1,1} a_{i,1}$ does not appear in $\det N$. 
	This implies that $a_{i, j}$ must also occur only in $f$ for each $j$, because $a_{i,1}a_{i,j}$ does not appear in $\det N$. 
	We conclude that $g$ is a unit. 
\end{proof}

In particular, Lemma~\ref{lemma:irreducible} implies that the polynomial $\det N$ defines a valuation on $K$. 
We now begin proving Theorem~\ref{thm:i=0}. We first deal with the case when $F$ is a facet.

\begin{proposition}\label{prop:h0facetcase}
	Let $F$ be a facet of $\Delta$. Then 
	$$\ord_{[F]}(\deg(\ell^d)) = -1.$$
\end{proposition}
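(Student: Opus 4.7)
The plan is to apply the Karu--Xiao formula (Proposition~\ref{prop:karuxiaodegree}) to $g = \ell^d$ and isolate the unique summand contributing a pole along $[F]=0$. Since $\ell = \sum_{j \in V} x_j$, the restriction $\ell_{F'}(t_1,\ldots,t_d) = t_1 + \cdots + t_d$ for every facet $F'$, so
\[
\deg(\ell^d) = \sum_{F' \text{ facet of } \Delta} \epsilon_{F'}\,\frac{(X_{F',1} + \cdots + X_{F',d})^d}{[F']\prod_{m=1}^d X_{F',m}} \in \widehat{K}.
\]

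I extend $\ord_{[F]}$ from $K$ to $\widehat{K} = K(a_{1,0},\ldots,a_{d,0})$ by the Gauss valuation, setting $\ord_{[F]}(a_{i,0})=0$; since $\deg(\ell^d) \in K$ this preserves the value on the left. Each determinant $[F'\cup\{0\}\setminus\{j'_m\}]$ is irreducible in $k[a_{i,j}]_{1\le i\le d,\,0\le j\le n}$ by an easy extension of Lemma~\ref{lemma:irreducible}, and is not a scalar multiple of $[F]$ because it involves $a_{i,0}$ whereas $[F]$ does not. Consequently $\ord_{[F]}(X_{F',m}) = 0$ for every facet $F'$ and every $m$, and $\ord_{[F]}([F']) = 0$ for every facet $F' \neq F$. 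Therefore each summand with $F' \neq F$ has $\ord_{[F]} \ge 0$ (its denominator is a unit at $[F]$, and its numerator is a polynomial), while the denominator of the $F$-summand has $\ord_{[F]} = 1$.

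The proof then reduces to showing that $\sum_{m=1}^d X_{F,m}$ is not divisible by $[F]$. The clean way to do this is via the following identity: let $\widetilde{A}$ be the $(d+1)\times(d+1)$ matrix obtained by adjoining a top row of $1$'s to the $d \times (d+1)$ coefficient matrix with columns indexed by $0, j_1, \ldots, j_d$. Cofactor expansion along the top row yields
\[
\det \widetilde{A} = [F] + \sum_{m=1}^d X_{F,m},
\]
so divisibility of $\sum_m X_{F,m}$ by $[F]$ is equivalent to divisibility of $\det \widetilde{A}$ by $[F]$. Both polynomials are homogeneous of degree $d$, so such divisibility would force $\det \widetilde{A} = c \cdot [F]$ for some $c \in k$. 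But $\det \widetilde{A}$ is a nonzero polynomial depending on the $a_{i,0}$ (its Leibniz expansion has no cancellations, since distinct permutations contribute distinct monomials), while $[F]$ does not involve those variables, a contradiction.

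Combining, the $F$-summand contributes $\ord_{[F]} = -1$, every other summand contributes $\ord_{[F]} \ge 0$, and therefore $\ord_{[F]}(\deg(\ell^d)) = -1$. The only nontrivial step is the cofactor identity $\det \widetilde{A} = [F] + \sum_m X_{F,m}$ and the subsequent degree/$a_{i,0}$-dependence argument; everything else is a routine bookkeeping of valuations under the Gauss extension.
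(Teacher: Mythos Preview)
Your proof is correct and follows essentially the same approach as the paper: apply the Karu--Xiao formula, observe that all summands with $F'\neq F$ have nonnegative $[F]$-order, and verify that the $F$-summand has order exactly $-1$ by checking that $\sum_m X_{F,m}$ is not divisible by $[F]$. The only cosmetic difference is in this last step: the paper invokes the proof of Lemma~\ref{lem:ordAiszero} (linear independence of Pl\"ucker coordinates together with a degree comparison), whereas you supply the explicit cofactor identity $\det\widetilde{A} = [F] + \sum_m X_{F,m}$ and then argue via dependence on the $a_{i,0}$; both arguments amount to the same degree-$d$ comparison.
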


\begin{proof}
	Using Proposition~\ref{prop:karuxiaodegree} and properties of valuations, we have
	\begin{equation}\label{eq:valuationcomputation}
		\ord_{[F]}(\deg(\ell^d)) \ge \min_{G \textrm{ facet of } \Delta} \left(d\ord_{[F]}(X_{G,1} + \dotsb  + X_{G,d}) - \ord_{[F]}([G])  - \sum_{m = 1}^d \ord_{[F]}(X_{G,m}) \right ),
	\end{equation}
	with equality if the minimum is achieved only once. 
	As $X_{G, m}$, $[G]$, and $[F]$ are irreducible polynomials of the same degree which are not scalar multiples of each other (except that $[G] = [F]$ if $G = F$), we see that for $G \not= F$, the quantity in the minimum  in \eqref{eq:valuationcomputation} 
	is nonnegative. Note that $\ord_{[F]}(X_{F, 1} + \dotsb + X_{F, d}) = 0$ by 
	Lemma~\ref{lem:ordAiszero}. Therefore
	the quantity in the minimum 
	in \eqref{eq:valuationcomputation} 
	is equal to $-1$ when $G = F$, and so 
	the minimum is $-1$ and is achieved exactly once. 
\end{proof}

\begin{proof}[Proof of Theorem~\ref{thm:i=0}]
	By Proposition~\ref{prop:h0facetcase}, it suffices to show that if $F$ is a subset of $V$ of size $d$ which is not a facet, then $\ord_{[F]}(\deg(\ell^d)) = 0$.
	By Lemma~\ref{lem:degmu} and Remark~\ref{r:ordernonfaces}, 
	it is enough to show that 
	$\deg_F(\ell^d) = \ev_{\theta_F}(\deg(\ell^d)) \not=0$. 
	
	First 
	assume
	there is a facet $F'$ of $\Delta$ with $|F' \cap F| \le d-2$. Let $\overline{[F']} = \ev_{\theta_F}([F'])$, which is irreducible by Lemma~\ref{lemma:irreducible}. 
	We use Corollary~\ref{cor:KaruXiaoformu} to compute that $\ord_{\overline{[F']}}(\deg_F(\ell^d))$ is bounded below by 
	\begin{equation}\label{eq:valuationcomputation2}
		\min_{G \textrm{ facet of } \Delta} \left(d\ord_{\overline{[F']}}(X_{G, \theta_F, 1} + \dotsb  + X_{G, \theta_F, d}) - \ord_{\overline{[F']}}(\ev_{\theta_F}([G]))  - \sum_{m = 1}^d \ord_{\overline{[F']}}(X_{G, \theta_F, m}) \right ),
	\end{equation}
	with equality if the minimum is achieved only once. 
	If $G \not= F'$, then 
	 $\ord_{\overline{[F']}}(\ev_{\theta_F}([G])) = 0$, because $\overline{[F']}$ is an irreducible polynomial of the same degree as $\ev_{\theta_F}([G])$, and they are not scalar multiples. Similarly, $\ord_{\overline{[F']}}(X_{G, \theta_F, m}) = 0$ (this holds even if $G = F'$). So if $G \not= F'$, then the 
	quantity in the minimum in \eqref{eq:valuationcomputation2} is nonnegative. 
	
	If $G = F'$, then $\ord_{\overline{[F']}}(\ev_{\theta_F}([G])) = \ord_{\overline{[F']}}(\overline{[F']}) = 1$. 
	Write $F' = \{ j_1 < \cdots < j_d \}$ and fix $1 \le m \le d$.  Then for $1 \le m' \le d$, the coefficient  of the monomial $a_{1,0} a_{2,j_1}\cdots a_{m,j_{m-1}} a_{m +1, j_{m + 1}} \cdots a_{d,j_d}$ in $X_{F', \theta_F, m'}$ is $(-1)^m$ if $m = m'$ and is  $0$ otherwise, and  the coefficient of this monomial in $[F']$ is zero. We deduce that
	$X_{F', \theta_F, 1} + \dotsb  + X_{F', \theta_F, d}$ is nonzero with the same degree as $[F']$ and
	$\ord_{\overline{[F']}}(X_{F', \theta_F, 1} + \dotsb  + X_{F', \theta_F, d})  = 0$.
	Therefore, the quantity in the minimum in \eqref{eq:valuationcomputation2} is $-1$ for $G = F'$, 
	so we deduce that $\ord_{\overline{[F']}}(\deg_F(\ell^d)) = -1$. 
	In particular, $\deg_F(\ell^d) \not= 0$. 
	
	Suppose that there is no such facet $F'$.
	Then we show that $\Delta$ must be the suspension $\Sigma$ of the boundary of a $(d-1)$-dimensional simplex, i.e., the case discussed in Section~\ref{sec:special}. 
	Let $v$ be a vertex of $\Delta$ not in $F$, so every facet containing $v$ has $d-1$ vertices from $F$. Let $L$ be the link of $v$. 
	Then $L$  is pure of dimension $d-2$ and has the property that any face of dimension $d-3$ is contained in exactly two faces of dimension $d-2$.  
	Because all facets of $L$ are contained in the boundary of $F$ and $L$ is pure,  $L$ is isomorphic to a subcomplex of $S^{d-2}$.
	The only subcomplex of $S^{d-2}$ which is pure of dimension $d-2$ and has the property  that any face of dimension $d-3$ is contained in exactly two faces of dimension $d-2$ is all of $S^{d-2}$.

	We see that $\Delta$ is isomorphic to the join of $S^{d-2}$ with a disjoint union of some vertices $\{v_1, \dotsc, v_r\}$. 
	Then the link of any facet of $S^{d-2}$ is $\{v_1, \dotsc, v_r\}$. Since every $(d-2)$-dimensional face of $\Delta$ lies in exactly two facets, we must have $r = 2$.
	Therefore $\Delta \cong \Sigma$. The case of $\Sigma$ was treated in Lemma~\ref{lem:HL0sigma}.
\end{proof}

\begin{remark}\label{rem:stronggq=0}
	The above argument, together with the proof of Lemma~\ref{lem:HL0sigma}, shows that if $F$ is a non-face of size $d$, then $\deg_F(\ell^d) = \ev_{\theta_F}(\deg(\ell^d))$ is nonzero. In particular, $\overline{H}_F(\Delta)$ has the strong Lefschetz property in degree $0$ over any field. 
\end{remark}

\section{Almost anisotropy}\label{sec:almostanisotropy}

In this section and the next, we prove Theorem~\ref{thm:strongg} when the characteristic of $k$ is $2$. We use the method introduced by Papadakis and Petrotou \cite{PapadakisPetrotoug} based on the special behavior of differential operators in characteristic $2$, as refined by Karu and Xiao \cite{KaruXiaoAnisotropy}. In the case of $\overline{H}(\Delta)$, their approach establishes that the Hodge--Riemann forms are anisotropic. While, in general, anisotropy fails for $\overline{H}_F(\Delta)$ (see Example~\ref{ex:anisotropyfails}), the same approach allows us to prove anisotropy ``away from a $1$-dimensional subspace'' (Proposition~\ref{prop:almostanisotropy}). Furthermore, as we will show in the next section, this weaker property is enough to deduce the strong Lefschetz property.

\smallskip

Fix a non-face $F$ of $\Delta$ of size $d$.  Recall from the introduction that $\theta_1^F = \sum_{j \not \in F} a_{1,j}x_j$ and that $\overline{H}_F(\Delta)$ is the Gorenstein quotient of $K[\Delta]/(\theta_1^F, \dotsc, \theta_d)$.  
Let $\ell_F = \sum_{j=1}^{n} x_j \in \overline{H}_F^1(\Delta)$. For $0 \le q \le d/2$, define the Hodge--Riemann form $\overline{H}_F^q(\Delta) \times \overline{H}_F^q(\Delta) \to K$ via $(y, z) \mapsto \deg_F(\ell_F^{d - 2q}  y  z)$.
The nondegeneracy of the Hodge--Riemann form
is equivalent to the map $\overline{H}_F^q(\Delta) \to \overline{H}_F^{d  - q}(\Delta)$ given by multiplication by $\ell_F^{d  -2q}$ being an isomorphism, and,  by Lemma~\ref{lem:slp} below, this is equivalent to the strong Lefschetz property in degree $q$.

\begin{lemma}\label{lem:slp}
	Fix some $0 \le q \le d/2$. 
	The algebra $\overline{H}_F(\Delta)$ has the strong Lefschetz property in degree $q$ if and only if multiplication by $\ell^{d-2q}_F$ is an isomorphism from $\overline{H}^{q}_F(\Delta)$ to $\overline{H}^{d - q}_F(\Delta)$, i.e., $\ell_F$ is a strong Lefschetz element. 
\end{lemma}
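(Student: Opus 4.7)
The ``if'' direction is immediate from the definition of the strong Lefschetz property, taking $y = \ell_F$ as the witness. For ``only if,'' the plan is to combine a standard generic Lefschetz argument with a rescaling trick that exploits the purely transcendental nature of $K/k$ to move a generic Lefschetz statement onto the specific element $\ell_F$.

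First I would pass to a generic Lefschetz element via a base change. Adjoin $n$ new indeterminates $t_1,\dots,t_n$, set $K' = K(t_1,\dots,t_n)$, and form $A := \overline{H}_F(\Delta) \otimes_K K'$. Choosing $K$-bases for $\overline{H}_F^q(\Delta)$ and $\overline{H}_F^{d-q}(\Delta)$, the determinant of multiplication by $\bigl(\sum_j c_j x_j\bigr)^{d-2q}$ is a polynomial $P \in K[c_1,\dots,c_n]$. The SLP hypothesis produces a Lefschetz element $y_0 \in \overline{H}_F^1(\Delta)$, i.e., a specialization of the $c_j$ at which $P$ does not vanish; in particular $P$ is not identically zero. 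Since $t_1,\dots,t_n$ are algebraically independent over $K$, $P(t_1,\dots,t_n)$ is then a nonzero element of $K'$, so $\sum_j t_j x_j$ is a Lefschetz element of $A$.

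The heart of the argument is to transport this conclusion from the generic linear form $\sum_j t_j x_j$ to $\ell_F = \sum_j x_j$, via the $k$-algebra automorphism $\sigma$ of $K'[x_1,\dots,x_n]$ defined by
\[
\sigma(x_j) = t_j x_j, \qquad \sigma(a_{i,j}) = t_j^{-1} a_{i,j}, \qquad \sigma(t_j) = t_j.
\]
This is a well-defined $k$-algebra automorphism since $\{t_j^{-1} a_{i,j}\} \cup \{t_j\}$ is again a transcendence basis of $K'/k$. The crucial computation is that the scalings on $x_j$ and $a_{i,j}$ cancel: $\sigma(\theta_i) = \sum_j (t_j^{-1} a_{i,j})(t_j x_j) = \theta_i$, and similarly $\sigma(\theta_1^F) = \theta_1^F$. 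Moreover $\sigma$ rescales each monomial generator of the Stanley--Reisner ideal by a unit, so it preserves that ideal as well. Hence $\sigma$ descends to a grading-preserving $k$-algebra automorphism of $A$ and of its Gorenstein quotient, sending $\ell_F$ to $\sum_j t_j x_j$. Since being a Lefschetz element is intrinsic to the graded ring structure, $\ell_F$ is therefore a Lefschetz element of $A$. Finally, because $A$ is obtained from $\overline{H}_F(\Delta)$ by extension of scalars along the field extension $K \subset K'$, multiplication by $\ell_F^{d-2q}$ is an isomorphism over $K'$ if and only if it is over $K$.

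The main obstacle I anticipate is purely technical: verifying that $\sigma$ is a well-defined $k$-algebra automorphism and, above all, that the two scalings cancel so that each $\theta_i$ and $\theta_1^F$ is fixed. Once this rescaling trick is in place, the rest is a routine application of generic Lefschetz.
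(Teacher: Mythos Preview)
Your argument is correct and follows essentially the same strategy as the paper's proof: both rely on the rescaling trick $x_j \mapsto c_j x_j$, $a_{i,j} \mapsto c_j^{-1} a_{i,j}$, which fixes each $\theta_i$ and $\theta_1^F$ and carries $\ell_F$ to $\sum_j c_j x_j$. The only difference is how the ``generic'' Lefschetz element $\sum_j c_j x_j$ is produced. You adjoin indeterminates $t_1,\dots,t_n$ to $K$ and observe that the determinant polynomial $P$ does not vanish at the generic point; the paper instead replaces $k$ by $\overline{k}$ and uses Zariski density to find scalars $\lambda_j \in \overline{k}^{\times}$ with $\sum_j \lambda_j x_j$ Lefschetz. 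Your version has the small advantage of avoiding the passage to the algebraic closure; the paper's version keeps the coefficient field fixed but enlarges the base field $k$. Both routes are standard realizations of the same idea, and your handling of the fact that $\sigma$ is only $k$-linear (not $K'$-linear) via bijectivity is fine, since multiplication by $\ell_F^{d-2q}$ is automatically $K'$-linear once it is shown to be a bijection.
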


\begin{proof}
	If $\ell_F$ is a strong Lefschetz element in degree $q$, then clearly $\overline{H}_F(\Delta)$ has the strong Lefschetz property in degree $q$. For the converse, we may replace $k$ by its algebraic closure. 
	There is a nonempty Zariski open subset of $\overline{H}_F^1(\Delta)$ consisting of strong Lefschetz elements in degree $q$. 
	It follows that there is a Zariski open subset $U$ of $k^n$ such that, for every $(\lambda_1, \dotsc, \lambda_n) \in U$, the element $\sum_{j=1}^{n} \lambda_j x_j$ is a strong Lefschetz element in degree $q$. 
	Therefore, we can find a strong Lefschetz element $\ell_{F,\lambda} = \sum \lambda_j x_j$ with each $\lambda_j \in k^{\times}$. Let
	$$H_{F, \lambda}(\Delta) = K[\Delta]/(\sum_{j \not \in F} \lambda_j a_{1,j} x_j, \sum_{j} \lambda_j a_{2,j} x_j, \dotsc, \sum_{j} \lambda_j a_{d,j} x_j),$$
	and let $\overline{H}_{F, \lambda}(\Delta)$ be the Gorenstein quotient. Because the $\lambda_j a_{i,j}$ are algebraically independent, $\ell_{F,\lambda}$ is a strong Lefschetz element for $\overline{H}_{F, \lambda}(\Delta)$. Let $\Phi \colon \overline{H}_F(\Delta) \to \overline{H}_{F, \lambda}(\Delta)$ be the graded isomorphism given by sending $x_j$ to $\lambda_j x_j$. Then we have a commutative square
	\begin{center}
		\begin{tikzcd}
			\overline{H}^q_F(\Delta) \arrow[r, "\ell_F^{d - 2q}"] \arrow[d, "\Phi"]
			& \overline{H}^{d-q}_F(\Delta) \arrow[d, "\Phi"] \\
			\overline{H}^q_{F, \lambda}(\Delta) \arrow[r, "\ell_{F,\lambda}^{d - 2q}"]
			& \overline{H}^{d-q}_{F, \lambda}(\Delta).
		\end{tikzcd}
	\end{center}
	As the bottom horizontal arrow is an isomorphism, so is the top horizontal arrow. 
\end{proof}

A similar equivalence holds for $\overline{H}(\Delta)$, i.e., $\overline{H}(\Delta)$ has the strong Lefschetz property in degree $q$ if and only if $\ell$ is a strong Lefschetz element in degree $q$. 

Say that $\overline{H}_F(\Delta)$ has the \emph{weak Lefschetz property} in degree $q$ if there is an element $y \in \overline{H}^1_F(\Delta)$ such that multiplication by $y$ induces a map of full rank from $\overline{H}_F^q(\Delta)$ to $\overline{H}_F^{q+1}(\Delta)$. It is well-known (see, e.g., \cite[Lemma 2.3 and Remark 2.4]{MZ}) that an artinian Gorenstein ring which is generated in degree $1$ (such as $\overline{H}_F(\Delta)$) has the weak Lefschetz property in all degrees if and only if it has the weak Lefschetz property in middle degree (i.e., degree $\lfloor d/2 \rfloor$); furthermore,
multiplication by $y$ induces an injection from 
$\overline{H}_F^q(\Delta)$ to $\overline{H}_F^{q+1}(\Delta)$
for $q < d/2$ and a surjection for $q > d/2 - 1$. 
The proof of Lemma \ref{lem:slp} gives the following result.

\begin{lemma}\label{lem:wlp}
For any $q$, the algebra $\overline{H}_F(\Delta)$ has the weak Lefschetz property in degree $q$ if and only if multiplication by $\ell_F$ is 
	a map of full rank
	from $\overline{H}^{q}_F(\Delta)$ to $\overline{H}^{q+1}_F(\Delta)$, i.e., $\ell_F$ is a weak Lefschetz element in degree $q$. 
\end{lemma}

\medskip

For the rest of this section, let $k$ be a field of characteristic $2$, and fix a non-face $F$ of $\Delta$ of size $d$.   Recall that by Lemma~\ref{lem:slp}, Theorem~\ref{thm:strongg} holds in degree $q$ if and only if the Hodge--Riemann form on $\overline{H}_F^q(\Delta)$ is nondegenerate. We will show that when the latter condition holds, the induced quadratic form is ``almost'' anisotropic, in the sense that there is at most one nonzero vector up to scaling for which the quadratic form is zero.

Recall that a quadratic form $Q$ on a vector space $V$ is \emph{anisotropic} if $Q(v) \neq 0$  for all nonzero $v$ in $V$.
Fix $0 \le q \le d/2$. 
Karu and Xiao proved the anisotropy of the (quadratic form associated to the)  Hodge--Riemann form on $\overline{H}^q(\Delta)$ \cite[Theorem 4.4]{KaruXiaoAnisotropy}, i.e., $\deg (\ell^{d - 2q}  z^2) \neq 0$ for nonzero $z \in \overline{H}^q(\Delta)$. Our goal is to analogously 
establish ``almost'' anisotropy for the Hodge--Riemann form on $\overline{H}_F^q(\Delta)$. The following example shows that anisotropy need not hold.

\begin{example}\label{ex:anisotropyfails}
Let $d=2$, and consider $\Sigma$ as in Section~\ref{sec:special}, i.e., $\Sigma$ has vertex set $\{1,2,3,4\}$ and minimal non-faces $\{1,2\}$ and $\{3,4\}$. 
Let $F = \{1,2\}$, and consider $H_F(\Sigma) = \overline{H}_F(\Sigma)$. Then $\theta_1^F = a_{1,3} x_3 + a_{1,4} x_4$, so the relation $x_3  x_4 = 0$ in $K[\Sigma]$ implies that $x_3^2 = 0$ in $H_F(\Sigma)$.
As $x_3 \not= 0$ in $H_F(\Sigma)$, anisotropy fails for $H_F(\Sigma)$. 
\end{example}

Let $W_q \subset \overline{H}_F^q(\Delta)$ be the span  of all monomials whose support is not contained in $F$. 
We will mainly be interested in the case when $q = \lfloor d/2 \rfloor$.

	\begin{lemma}\label{lem:codim1}
		The codimension of $W_q$ in $\overline{H}^{q}_F(\Delta)$ is at most $1$. 
		Consider a linear form $g = \sum_{i = 1}^n \lambda_i x_i$ for some $\lambda_i \in k$ such that $\lambda_v \neq 0$ for some $v \in F$. If 
		$g^q$ lies in $W_q$, then 
		$W_q = \overline{H}^{q}_F(\Delta)$. 
	\end{lemma}
\begin{proof}
Let $w$ be a vertex not in $F$. We claim that $\{ g \} \cup \{ x_v : v \notin F \cup \{ w \}\}$ is a basis of 
	$H^1_F(\Delta)$. Assuming this claim, monomials  of degree $q$ in this basis span $\overline{H}^{q}_F(\Delta)$. Every such monomial except $g^{q}$ lies in $W_q$, proving that $W_q$ has codimension at most $1$, and that $W_q = \overline{H}^{q}_F(\Delta)$ if $g^{q}$ lies  in $W_q$. 
	
	It remains to establish the claim. We need to show that 
	$\theta_1^F, \theta_2, \ldots, \theta_d$ together with 
	$\{ g \} \cup \{ x_v : v \notin F \cup \{ w \}\}$ are linearly independent, and hence form a basis of $K[x_1,\ldots,x_n]_1$. Without loss of generality, assume that $F = \{ 1,\ldots, d\}$ and $w = d + 1$. 
	Let $M$ denote 
    the $(d + 1) \times (d + 1)$ matrix whose rows record the coefficients of
    $\theta_1^F, \theta_2, \ldots, \theta_d, g$
    with respect to the vertices $\{ 1,\ldots,d + 1\}$. 
    It is enough to show that $\det M \neq 0$. 
		Let $M'$ be the submatrix obtained by removing the first row and last column. 
		Since the only nonzero entry in the first row of $M$ is the last entry, it suffices to show
    that $\det M' \neq 0$. This follows since all entries of $M'$ are generic except the last row, which is nonzero by assumption.
\end{proof}

Let $W_q^{\perp} = \{ z \in \overline{H}^q_F(\Delta) : \deg_F(\ell_F^{d - 2q}  z  w) = 0  \textrm{ for all } w \in W_q \}$. If we assume the Hodge--Riemann form on $\overline{H}^q_F(\Delta)$ is nondegenerate in degree $q$,  then 
$W_q^{\perp}$ is the orthogonal complement of $W_q$ 
and 
the dimension of $W_q^{\perp}$ is at most $1$ by Lemma~\ref{lem:codim1}. 

\begin{example}
	In Proposition \ref{prop:degree1}, we will show that $\overline{H}^1_F(\Delta) = H^1_F(\Delta)$. In this case, as $\theta_1^F$ gives a linear relation between $\{ x_w : w \notin F \}$ in $H^1_F(\Delta)$, $W_1$ has codimension $1$. In Example~\ref{ex:anisotropyfails}, $W_1 = W_1^\perp = \operatorname{Span}(x_3) \subset H^1_F(\Delta)$. 
\end{example}
 
We have the following application of Corollary~\ref{cor:karuxiaoanalogue2}.  
 
\begin{proposition}\label{prop:almostanisotropy}
Let $z \in \overline{H}^{q}_F(\Delta)$. If $\deg_F(\ell_F^{d - 2q}  z^2) = 0$, then $z \in W_q^{\perp}$. 
\end{proposition} 
\begin{proof}
Let $J = (j_1,\ldots,j_q)$ be a sequence of elements of $V$ such that the support of $x_J$ is not contained in $F$. We may assume that $j_1 \notin F$. If we can show that $\deg_F(\ell_F^{d - 2q} x_J z) = 0$, then applying this to all such $J$ implies that $z \in W_q^{\perp}$.

Let $L$ be a sequence of elements of $V$ of size $\lfloor d/2 \rfloor - q$, and let $v$ be a vertex of $V$. 
Let $(J_1 | \cdots | J_r)$ denote the concatenation of sequences of vertices $J_1,\ldots,J_r$. Define 
\[
I = \begin{cases}
	(J | J | L | L ) &\textrm{ if } d \textrm{ is even} \\
	(J | J | L | L | v ) &\textrm{ if } d \textrm{ is odd.} \\
\end{cases}
\]
If $d$ is even, then Corollary~\ref{cor:karuxiaoanalogue2} implies that
\[
0 = \partial_I \deg_F(\ell_F^{d - 2q}  z^2) = \deg_F(\ell_F^{d/2 - q} x_J x_L z)^2.
\]
Since this holds for all $L$, we deduce that $\deg_F(\ell_F^{d - 2q} x_J z) = 0$. 

If $d$ is odd, then Corollary~\ref{cor:karuxiaoanalogue2} implies that
\[
0 = \partial_I \deg_F(\ell_F^{d - 2q}  z^2) = \sum_{i = 1}^n \deg_F(\ell_F^{(d - 1)/2 - q} x_J x_L \sqrt{x_i x_v} z)^2 = \deg_F(\ell_F^{(d - 1)/2 - q} x_J x_L x_v z)^2.
\]
Since this holds for all $L$ and $v$, we deduce that $\deg_F(\ell_F^{d - 2q} x_J z) = 0$. 
\end{proof}

\begin{corollary}\label{cor:anisocriterion}
	Assume that either $W_q^{\perp} = 0$, 
	 or the Hodge--Riemann form on $\overline{H}_F^q(\Delta)$ is nondegenerate and $W_q^{\perp} \not \subset W_q$.  Then the Hodge--Riemann form on $\overline{H}_F^q(\Delta)$ is anisotropic. 
\end{corollary}
\begin{proof}
		If $W_q^{\perp} = 0$, then Proposition~\ref{prop:almostanisotropy} implies that this form is anisotropic. Assume that the Hodge--Riemann form on $\overline{H}_F^q(\Delta)$ is nondegenerate and $W_q^{\perp} \not \subset W_q$. Then $\dim W_q^{\perp} = \codim W_q = 1$ by Lemma~\ref{lem:codim1}. Let $W_q^{\perp} = \spn(z)$, and assume that $\deg_F(\ell_F^{d - 2q} z^2) = 0$. Then 
		$z \in (W_q^\perp)^{\perp} = W_q$, contradicting our assumption that $W_q^\perp$ is not contained in $W_q$. 
		Hence $\deg_F(\ell_F^{d - 2q} z^2) \neq 0$, and the result follows from 
	 Proposition~\ref{prop:almostanisotropy}.
	 \end{proof}

\begin{proposition}\label{prop:degencriterioneven}
	Assume that the Hodge--Riemann form on $\overline{H}_F^q(\Delta)$ is nondegenerate. 
	Let $U$ be a subspace of $\overline{H}^{q}_F(\Delta)$ where the restriction of the Hodge--Riemann form on $\overline{H}_F^q(\Delta)$ to $U$ is degenerate. 
    Then $U \subset W_q$. 
\end{proposition}
\begin{proof}
	If $W_q^{\perp} = 0$, then Corollary~\ref{cor:anisocriterion} implies that the Hodge--Riemann form in degree $q$ is anisotropic, and so the restriction to any subspace is anisotropic and therefore nondegenerate. By Lemma~\ref{lem:codim1}, we may therefore assume that $W_q^{\perp}$ is one-dimensional, generated by $y_1$. By Corollary~\ref{cor:anisocriterion}, we may assume that $y_1 \in W_q$. By Proposition~\ref{prop:almostanisotropy}, if $y_1 \notin U$, then the restriction of  the Hodge--Riemann form  to $U$ is anisotropic and hence nondegenerate, a contradiction. Therefore 
	$U$ must contain $y_1$. 
	
	Assume that $U$ is not contained in $W_q$. 
	Then the codimension of $U \cap W_q$ in $U$ is $1$, and so we may extend $y_1$ to a basis $y_1, y_2, \dotsc, y_r$ of $U$  with $y_1, \dotsc, y_{r-1} \in W_q$. 
	Since $y_1 \in W_q^\perp$,  $\deg_F(\ell_F^{d - 2q}  y_1  y_i) = 0$ for $1 \le i < r$. 
	As $(W_q^{\perp})^{\perp} = W_q$ and $y_r \not \in W_q$, it follows that $\deg_F(\ell_F^{d - 2q}  y_1  y_r) \not= 0$. Let $M$ be the $r \times r$ matrix whose $(i, j)$th entry is $\deg_F(\ell_F^{d - 2q}  y_i  y_j)$. Let $M'$ be the $(r-2) \times (r-2)$ submatrix given by rows $\{2, \dotsc, r-1\}$ and columns $\{2, \dotsc, r-1\}$. We see that 
	$$\det(M) = \deg_F(\ell_F^{d - 2q}  y_1  y_r)^2  \det(M').$$
	As the span of $y_2, \dotsc, y_{r-1}$ does not contain $W_q^{\perp}$, Proposition~\ref{prop:almostanisotropy} implies that the Hodge--Riemann form  restricted to the span of $y_2, \dotsc, y_{r-1}$ is anisotropic, and hence nondegenerate. In particular, $\det(M') \not=0$. 
	We conclude that $\det(M) \not=0$, contradicting the assumption that the Hodge--Riemann form is degenerate when restricted to $U$. 
\end{proof}

\section{Strong Lefschetz in characteristic 2}\label{sec:char2}

In this section, we complete the proof of the strong Lefschetz property for $\overline{H}_F(\Delta)$ in characteristic $2$. We use some ideas from \cite[Section 9.1]{PapadakisPetrotoug}, which are in turn inspired by ideas of Swartz, e.g., \cite[Proposition 4.24]{Swartz09}.

Throughout this section, we assume that $k$ has characteristic $2$. 
Let $d > 1$ be arbitrary. Let $S(\Delta)$ denote the suspension of $\Delta$, which has vertex set $V \cup \{n+1, n+2\} = \{1, \dotsc, n+1, n+2\}$. Since $\Delta$ is a connected pseudomanifold, so is $S(\Delta)$. Let $\hat{F} = F \cup \{n+1\}$; note that $\hat{F}$ is a non-face of $S(\Delta)$. Let $\hat{K} = k(a_{i,j})_{1 \le i \le d+1, 1 \le j \le n+2}$.

We will consider a slightly different l.s.o.p. for $S(\Delta)$. Set $\hat{\theta}_1^F = a_{d+1, n+1}^{-1} \sum_{j \not \in \hat{F}} a_{1,j} x_j$. For $1 < i \le d$, set
$$\hat{\theta}_i = a_{i,n+1} x_{n+1} + a_{i, n+2}x_{n+2} + \sum_{j=1}^{n} (a_{i,j} + a_{d+1,j} a_{i,n+1} a_{d+1,n+1}^{-1})x_j,$$
and set $\theta_{d+1} = \sum_{j = 1}^{n + 2} a_{d+1, j} x_j$. Let $\hat{\theta}_F = (\hat{\theta}_1^F, \hat{\theta}_2, \dotsc, \hat{\theta}_d, \theta_{d+1})$. Let $\overline{H}_{\hat{\theta}_F}(S(\Delta))$ be the Gorenstein quotient of 
$\hat{K}[S(\Delta)]/\hat{\theta}_F$. 
Note that the coefficients of $\hat{\theta}_F$ are generic except that, when $j \in \hat{F}$, the coefficient of $x_j$ in $\hat{\theta}_1^F$ is $0$. In particular, $\overline{H}_{\hat{\theta}_F}(S(\Delta))$ is isomorphic as a graded ring to $\overline{H}_{\hat{F}}(S(\Delta))$.

We first outline the argument. 
After extending scalars, we can identify $\overline{H}_F(\Delta)$ with the ideal $(x_{n+1})$ in $\overline{H}_{\hat{\theta}_F}(S(\Delta))$ (see Proposition~\ref{prop:gysin}). When $d$ is odd, we will show that the weak Lefschetz property for $\overline{H}_F(\Delta)$ is equivalent to the Hodge--Riemann form on $\overline{H}^{(d+1)/2}_{\hat{\theta}_F}(S(\Delta))$ being nondegenerate when restricted to the ideal $(x_{n+1})$. 
We then apply the results of Section~\ref{sec:almostanisotropy} to  $\overline{H}_{\hat{\theta}_F}(S(\Delta))$ to deduce the latter result, and hence 
deduce the weak Lefschetz property for $\overline{H}_F(\Delta)$. We can then use the results of Section~\ref{sec:almostanisotropy} on $\overline{H}_F(\Delta)$ to deduce the strong Lefschetz property. The case when $d$ is even is similar, using that $d+1$ is odd, so we already know the weak Lefschetz property for $\overline{H}_{\hat{\theta}_{F}}(S(\Delta))$.

In the statement of the proposition below, we extend $\deg_F$ to an isomorphism of $\hat{K}$-vector spaces from $\overline{H}_F^{d}(\Delta) \otimes_K \hat{K}$ to $\hat{K}$.

\begin{proposition}\label{prop:gysin}
	There is an isomorphism of $\hat{K}$-algebras
	$$\overline{\varphi} \colon \overline{H}_{\hat{\theta}_F}(S(\Delta))/\operatorname{ann}(x_{n+1}) \stackrel{\sim}{\to} \overline{H}_F(\Delta)\otimes_K \hat{K}$$
	given by $\overline{\varphi}(x_i) = x_i$ for $i \le n$, $\overline{\varphi}(x_{n+1}) = \sum_{j=1}^{n} a_{d+1, j} a_{d+1, n+1}^{-1} x_j$, and $\overline{\varphi}(x_{n+2}) = 0$. Let $\varphi$ be the composition 
	$$\overline{H}_{\hat{\theta}_F}(S(\Delta)) \to \overline{H}_{\hat{\theta}_F}(S(\Delta))/\operatorname{ann}(x_{n+1}) \stackrel{\overline{\varphi}}{\to} \overline{H}_F(\Delta) \otimes_K \hat{K}.$$
	Then $\deg_{\hat{\theta}_F}(z  x_{n+1}) = \deg_{F}(\varphi(z))$ for all $z \in \overline{H}^d_{\hat{\theta}_F}(S(\Delta))$. 
\end{proposition}

\begin{proof}
	First observe that there is a map $H_{\hat{\theta}_{F}}(S(\Delta)) \to H_F(\Delta) \otimes_K \hat{K}$ defined by 
	$x_i \mapsto x_i$ for $i \le n$ and $x_{n+2} \mapsto 0$. As $\theta_{d+1} = 0$, this implies that  $x_{n+1}$ is sent to $a_{d+1, n+1}^{-1} \sum_{j=1}^{n} a_{d+1, j}  x_j$.  We check that the induced map $\varphi' \colon H_{\hat{\theta}_{F}}(S(\Delta)) \to \overline{H}_F(\Delta) \otimes_K \hat{K}$ descends to $\overline{H}_{\hat{\theta}_{F}}(S(\Delta))/\operatorname{ann}(x_{n+1})$. For this, we need to show that if we have $0 \le q \le d$ and $y \in H^q_{\hat{\theta}_{F}}(S(\Delta))$ with $\deg_{\hat{\theta}_F}(x_{n+1}  y  z) = 0$ for all $z \in H_{\hat{\theta}_{F}}^{d - q}(S(\Delta))$, then
	 $\varphi'(y) = 0$. 
	 To do so, it suffices to prove that
	$$\deg_{\hat{\theta}_F}(x_{n+1}  w) = \deg_F(\varphi'(w)) \quad \text{for all }w \in H^d_{\hat{\theta}_{F}}(S(\Delta)).$$
	
	We claim that $H^d_{\hat{\theta}_{F}}(S(\Delta))$ is spanned by squarefree monomials in $x_1, \dotsc, x_n$. 
	It is well-known that 
	it is spanned by squarefree monomials in $x_1,\ldots, x_n, x_{n+1}, x_{n+2}$ supported on $(d-1)$-dimensional faces of $S(\Delta)$; this follows 	 by using \eqref{eq:makesquarefree} to repeatedly increase the size of the support. Let $G=\{i_1,\ldots, i_{d-1}, i_d\}$ be one such face with $i_d\in\{n+1,n+2\}$ and let $G'= (G\smallsetminus \{i_d\}) \cup \{n+1,n+2\} \subset \{1,\ldots,n+2\}$. 
	Then $G'$ has size $d+1$, and 
	Lemma~\ref{lem:Gaussianelim} 
	allows us to express $x_{i_d}\in H^1_{\hat{\theta}_F}(S(\Delta))$ as a linear combination of variables in $\{x_p : p\notin G'\}$. Substituting such an expression for $x_{i_d}$ in the monomial $x_G$ shows that $x_G$ lies in the span of squarefree monomials in $x_1,\ldots,x_n$, as claimed.
	
	For each squarefree monomial $x_{j_1} \dotsb x_{j_d}$ in $x_1, \dotsc, x_n$, 
	 $\deg_{\hat{\theta}_F}(x_{j_1} \dotsb x_{j_d}  x_{n+1})$ equals
	$$
	\det \begin{pmatrix}a_{1, j_1}a_{d+1, n+1}^{-1} & \dotsb & a_{1, j_d}a_{d+1, n+1}^{-1} & 0 \\ a_{2,j_1} + a_{d+1,j_1} a_{2,n+1} a_{d+1,n+1}^{-1} & \dotsb &  a_{2,j_d} + a_{d+1,j_d} a_{2,n+1} a_{d+1,n+1}^{-1} & a_{2, n+1} \\ \vdots & \dotsb & \vdots & \vdots \\ a_{d+1, j_1} & \dotsb & a_{d+1, j_d} & a_{d+1, n+1} \end{pmatrix}^{-1},$$
	where if $j_p \in F$ we interpret $a_{1,j_p}$ as $0$. Using row operations to make the last column $0$ except for the $(d+1, d+1)$ entry, we see that this is equal to $\deg_F(x_{j_1} \dotsb x_{j_d})$.
	We conclude that $\overline{\varphi}$ is well-defined. 

	Multiplication by $x_{n + 1}$ induces a degree $1$ graded isomorphism 
	$\overline{H}_{\hat{\theta}_F}(S(\Delta))/\operatorname{ann}(x_{n+1}) \cong (x_{n + 1}) \subset \overline{H}_{\hat{\theta}_F}(S(\Delta))$. It follows that 
	$\overline{H}_{\hat{\theta}_F}(S(\Delta))/\operatorname{ann}(x_{n+1})$ is an artinian Gorenstein algebra of socle degree $d$. 
	Since $\overline{\varphi}$ is a surjective graded map between artinian Gorenstein algebras whose socles are in the same degree, it is an isomorphism. 
\end{proof}

Proposition~\ref{prop:gysin} then implies that the ideal $(x_{n+1})$ in $\overline{H}_{\hat{\theta}_F}(S(\Delta))$ can be identified with $\overline{H}_F(\Delta) \otimes_K \hat{K}[-1]$. 
Following the strategy of \cite[Section 9.1]{PapadakisPetrotoug}, we apply the results in Section~\ref{sec:almostanisotropy} to $S(\Delta)$ in order to prove the weak Lefschetz property for $\Delta$. 

\begin{proposition} \label{prop:odd-weak-Lefshetz}
	Let $F$ be a non-face of $\Delta$ of size $d$. 	
	Then $\overline{H}_F(\Delta)$ has the weak Lefschetz property. 
\end{proposition}

\begin{proof}
	We can check this after extending scalars to $\hat{K}$; see, e.g., \cite[Proposition 13.3]{PapadakisPetrotoug}.
		We assume the setup of Proposition~\ref{prop:gysin}, and we
	set $\varphi(x_{n+1}) = x$. 	Let $U_{q}$ denote the degree $q$ part of the ideal $(x_{n + 1})$ in $\overline{H}_{\hat{\theta}_F}(S(\Delta))$.  Let $W_{q}$ denote the subspace of $\overline{H}^{q}_{\hat{\theta}_F}(S(\Delta))$ spanned by monomials whose support is not contained in $\hat{F}$.
	
	First assume that $d$ is odd. 
	Because $\overline{H}_F(\Delta)$ is generated in degree $1$, it suffices to prove that multiplication by $x$ induces an isomorphism from $\overline{H}^{(d-1)/2}_F(\Delta) \otimes_K \hat{K}$ to $\overline{H}^{(d+1)/2}_F(\Delta) \otimes_K \hat{K}$. 
	This, in turn, is equivalent to the form $(y, z) \mapsto\deg_F(x  y  z)$ on $\overline{H}^{(d-1)/2}_F(\Delta) \otimes_K \hat{K}$ being nondegenerate.
	By Proposition~\ref{prop:gysin}, 
	$$\deg_F(x  y  z) = \deg_{\hat{\theta}_F}(x_{n+1}^2  \tilde{y}  \tilde{z}),$$
	where $\tilde{y}$ and $\tilde{z}$ are any lifts of $y$ and $z$ to $\overline{H}_{\hat{\theta}_F}(S(\Delta))$. 
	Recall that we can identify  $\overline{H}_F^{(d - 1)/2}(\Delta) \otimes_K \hat{K}$ with $U_{(d + 1)/2}$ such that $y$ and $z$ correspond to $\tilde{y}  x_{n + 1}$ and $\tilde{z}  x_{n + 1}$, respectively.
	Thus, we can identify the form $(y, z) \mapsto\deg_F(x  y  z)$ on $\overline{H}^{(d-1)/2}_F(\Delta) \otimes_K \hat{K}$ with the form 
	$(y', z') \mapsto \deg_{\hat{\theta}_F}(y'  z')$ on $U_{(d + 1)/2}$, i.e., the restriction of the Hodge--Riemann form on 
	$\overline{H}^{(d + 1)/2}_{\hat{\theta}_F}(S(\Delta))$ to $U_{(d + 1)/2}$. Our goal is to show that this form is nondegenerate.
	 
Because $\overline{H}_{\hat{\theta}_F}$ is artinian Gorenstein and $(d+1)/2$ is the middle degree, the Hodge--Riemann form on $\overline{H}^{(d + 1)/2}_{\hat{\theta}_F}(S(\Delta))$ is nondegenerate. Assume that its restriction to $U_{(d + 1)/2}$ is degenerate. Then Proposition~\ref{prop:degencriterioneven} implies that $U_{(d + 1)/2} \subset W_{(d + 1)/2}$.
As $x_{n+1}^{(d+1)/2} \in U_{(d + 1)/2}$, Lemma \ref{lem:codim1} implies that $W_{(d + 1)/2} =   \overline{H}^{(d+1)/2}_{\hat{\theta}_F}(S(\Delta))$, so Corollary \ref{cor:anisocriterion} implies that the Hodge--Riemann form on  $\overline{H}^{(d+1)/2}_{\hat{\theta}_F}(S(\Delta))$ is anisotropic, contradicting the assumption that the restriction to $U_{(d + 1)/2}$ is degenerate. 

	Next assume that $d$ is even. The argument is similar to the one above. 
	Let $\ell_{\hat{F}}$ be the image of $\sum_{i = 1}^{n + 2} x_i$ in $\overline{H}^1_{\hat{\theta}_F}(S(\Delta))$.
	Because $\overline{H}_F(\Delta)$ is generated in degree $1$, it suffices to prove that multiplication by $x$ induces an injection from $\overline{H}^{d/2 - 1}_F(\Delta) \otimes_K \hat{K}$ to $\overline{H}^{d/2}_F(\Delta) \otimes_K \hat{K}$. 
	This follows if we can show that multiplication 
	by $x  \varphi(\ell_{\hat{F}})$ is injective, or, equivalently, that the form 
	$(y, z) \mapsto\deg_F(x  \varphi(\ell_{\hat{F}})  y  z)$ on $\overline{H}^{d/2 - 1}_F(\Delta) \otimes_K \hat{K}$ is nondegenerate. 	By Proposition~\ref{prop:gysin}, 
	$$\deg_F(x   \varphi(\ell_{\hat{F}})  y  z) = \deg_{\hat{\theta}_F}(x_{n+1}^2  \ell_{\hat{F}}   \tilde{y}  \tilde{z}),$$
	where $\tilde{y}$ and $\tilde{z}$ are any lifts of $y$ and $z$ to $\overline{H}_{\hat{\theta}_F}(S(\Delta))$. 
	Recall that we can identify  $\overline{H}_F^{d/2 - 1}(\Delta) \otimes_K \hat{K}$ with $U_{d/2}$ such that $y$ and $z$ correspond to $\tilde{y}  x_{n + 1}$ and $\tilde{z}  x_{n + 1}$ respectively.
	Thus, we can identify the form $(y, z) \mapsto\deg_F(x   \varphi(\ell_{\hat{F}})   y  z)$ on $\overline{H}^{d/2 - 1}_F(\Delta) \otimes_K \hat{K}$ with the form 
	$(y', z') \mapsto \deg_{\hat{\theta}_F}(\ell_{\hat{F}}  y'  z')$ on $U_{d/2}$, i.e., 
	the restriction of the Hodge--Riemann form on 
	$\overline{H}^{d/2}_{\hat{\theta}_F}(S(\Delta))$ to $U_{d/2}$. Our goal is to show that this form is nondegenerate.

Since we have established weak Lefschetz for even-dimensional pseudomanifolds, and, in particular, for $S(\Delta)$, Lemma~\ref{lem:slp} implies that
the Hodge--Riemann form on $\overline{H}^{d/2}_{\hat{\theta}_F}(S(\Delta))$  is nondegenerate. The rest of the argument now proceeds just as above. Explicitly, assume that its restriction to $U_{d/2}$ is degenerate. Then Proposition~\ref{prop:degencriterioneven} implies that $U_{d/2} \subset W_{d/2}$.
As $x_{n+1}^{d/2} \in U_{d/2}$, Lemma \ref{lem:codim1} implies that $W_{d/2} =   \overline{H}^{d/2}_{\hat{\theta}_F}(S(\Delta))$, and Corollary \ref{cor:anisocriterion} implies that the Hodge--Riemann form on  $\overline{H}^{d/2}_{\hat{\theta}_F}(S(\Delta))$ is anisotropic, contradicting the assumption that the restriction to $U_{d/2}$ is degenerate.
\end{proof}

We are now ready to prove the strong Lefschetz property for $\overline{H}_F(\Delta)$ in characteristic $2$.

\begin{theorem}\label{thm:stronggpseudo}
	Let $\Delta$ be a connected simplicial 
	pseudomanifold  
	of dimension $d-1$, and let $0 \le q \le d/2$. 
	Assume that $\Char k = 2$.
	Then 
	for every non-face $F$ of size $d$, $\overline{H}_F(\Delta)$ has the strong Lefschetz property in degree $q$. 
\end{theorem}
\begin{proof}
We 	
want to show that the map $ \ell_F^{d-2q} \colon \overline{H}^q_F(\Delta) \to \overline{H}^{d-q}_F(\Delta)$ is an isomorphism. This is equivalent to showing that the Hodge--Riemann form $(y,z)\mapsto\deg_F(\ell_F^{d-2q}  y  z)$ is nondegenerate on $\overline{H}^q_F(\Delta)$. 

Let $m = \lfloor d/2 \rfloor$. By Proposition~\ref{prop:odd-weak-Lefshetz} and Lemma~\ref{lem:wlp}, $\ell_F$ is a weak Lefschetz element,  and so 
the Hodge--Riemann form on $\overline{H}^m_F(\Delta)$ 
is nondegenerate.
Moreover,
$$  \ell_F^{m-q} \colon \overline{H}^q_F(\Delta) \to \overline{H}^{m}_F(\Delta)$$
is injective and compatible with the Hodge--Riemann forms. For example, in the case when $d$ is odd, for all $y,z\in \overline{H}^q_F(\Delta)$ we have
$$
\deg_F(\ell_F^{d-2q}  y  z)=\deg_F(\ell_F (\ell_F^{m-q} y)(\ell_F^{m-q} z)). 
$$
Let $U$ be the degree $m$ part of the ideal $(\ell_F^{m-q})$ in $\overline{H}_F(\Delta)$.
In order to complete the proof,
it suffices to show that the restriction of the Hodge--Riemann form on $\overline{H}^m_F(\Delta)$ to $U$ is nondegenerate. 

Suppose that the restriction to $U$ is degenerate. 
Proposition \ref{prop:degencriterioneven} implies that $U \subset W_m$, where
$W_{m}$ is the subspace of $\overline{H}^{m}_{F}(\Delta)$ spanned by monomials whose support is not contained in $F$.
 Since $\ell_F^{m} \in U$, Lemma \ref{lem:codim1} implies that $W_m =  \overline{H}_F^m(\Delta)$. Corollary \ref{cor:anisocriterion} implies that the Hodge--Riemann form on $\overline{H}_F^m(\Delta)$ is anisotropic, contradicting the assumption that the restriction to $U$ is degenerate.
\end{proof}

\medskip

As Example~\ref{ex:anisotropyfails} shows, the Hodge--Riemann forms on $\overline{H}_F(\Delta)$ are not, in general, anisotropic. We note one case where the proof of Theorem~\ref{thm:stronggpseudo} can be used to deduce anisotropy. 
Examples of simplicial complexes with small non-faces
include, for instance, all flag complexes. In this case, all minimal non-faces have size two.

\begin{proposition}
Let $\Delta$ be a pseudomanifold, and let $k$ be a field of characteristic $2$. Let $F$ be a non-face of $\Delta$ of size $d$, and suppose that $F$ contains a non-face of size at most $m = \lfloor d/2 \rfloor$. Then, for every $0 \le q \le m$, the Hodge--Riemann form on $\overline{H}^q_F(\Delta)$ is anisotropic. 
\end{proposition}

\begin{proof}
	To simplify the notation, assume that $F=\{1,2,\ldots,d\}$ and that $G=\{1,2,\ldots,r\}$ is a non-face of $\Delta$ for some $r \le m$. 
Let $W_m \subset \overline{H}^m_F(\Delta)$ be the span of monomials whose support is not contained in $F$. As in the proof of Theorem~\ref{thm:stronggpseudo}, 
by Lemma \ref{lem:codim1} and Corollary \ref{cor:anisocriterion},
it suffices to show that $x_1^m \in W_m$. 

Using \eqref{eq:gaussianelim}, for each $2\leq i\leq r$, we can express $x_i\in H^1_F(\Delta)$ as the sum of $\lambda_i x_1$ and some linear combination of $\{x_p : d+2\leq p\leq n\}$. Here, up to a sign, 
$\lambda_i$ equals $\ev_{\theta_F}([\{1, \ldots,d+1\} \smallsetminus \{i\}])/\ev_{\theta_F}( [\{2, \ldots, d+1\}])$, and, in particular, is nonzero.
	 Now, since $\{1,2,\ldots,r\}$ is a non-face of $\Delta$, the product $x_1^{m-r+1}x_2\ldots x_r$ is equal to zero. Substituting the above expressions for $x_2,\ldots, x_r$ in this product implies that $\lambda_2\cdots\lambda_r x_1^m$ is a linear combination of monomials in $\{x_p : d+2\leq p\leq n\}$, so $x_1^m$ belongs to $W_m$. 
\end{proof}

\section{Proofs of theorems}\label{sec:proofs}

In this section, we prove Theorem~\ref{thm:strongg}, then Theorem~\ref{thm:oddmultHodgeRiemann}, and then finally Theorem~\ref{thm:middledegree}. 

\medskip

Let $\Delta$ be a connected oriented simplicial $k$-homology manifold of dimension $d-1$. We will need the following result of the second author and Swartz, which uses as input results of Gr\"{a}be and Schenzel  \cite{Grabe,SchenzelNumberFacesSimplicialComplexes}. Let $\beta_q = \dim \tilde{H}_q(\Delta; k)$, the dimension of the reduced homology of $\Delta$ over $k$. By the universal coefficient theorem, this depends only on the characteristic of $k$. 
Let $(h_0(\Delta), \dotsc, h_d(\Delta))$ be the $h$-vector of $\Delta$. 
Let $\overline{H}_{\mu}(\Delta)$ be the Gorenstein quotient of $K[\Delta]/(\mu_1, \dotsc, \mu_d)$ for an l.s.o.p. $\mu = (\mu_1, \dotsc, \mu_d)$ for $K[\Delta]$.

\begin{proposition}\cite[Theorem 1.3 and 1.4]{NovikSwartzGorensteinRings}\label{prop:samedim}
Let $\Delta$ be a connected oriented simplicial $k$-homology manifold of dimension $d-1$.
	Let $\mu = (\mu_1, \dotsc, \mu_d)$ be an l.s.o.p. for $K[\Delta]$.
	Then
	$$\dim \overline{H}^q_{\mu}(\Delta) = \begin{cases} h_q(\Delta) 
		+ \binom{d}{q}	\sum_{p = 0}^{q - 1} (-1)^{q-p} \beta_{p}
		& \text{if } 0 \le q < d \\ 1 & \text{if } q = d. \end{cases}$$
\end{proposition}
In particular, $\dim \overline{H}_{\mu}^q(\Delta)$ is independent of the choice of l.s.o.p. This will be used to compare the Hodge--Riemann forms on $\overline{H}(\Delta)$ to those on $\overline{H}_{\mu}(\Delta)$, for various l.s.o.p.s $\mu$. 

\medskip

\begin{proof}[Proof of Theorem~\ref{thm:strongg}]
When $\Char k = 2$, the result follows immediately from  Theorem~\ref{thm:stronggpseudo}. Now suppose that $\Char k = 0$, and that the integral homology of the link of any face (including the empty face) of $\Delta$ has no $2$-torsion.

	The assumption that the integral homology of the link of every face has no $2$-torsion implies that $\Delta$ is a homology manifold over $\mathbb{F}_2$, and that $\dim H_q(\Delta; k) = \dim H_q(\Delta; \mathbb{F}_2)$ for all $q$.
	Let $\overline{H}_{F, 2}(\Delta)$ be the Gorenstein quotient of $\mathbb{F}_2(a_{i,j})[\Delta]/(\theta_1^F, \dotsc, \theta_d)$. 
By Proposition~\ref{prop:samedim}, $\dim \overline{H}^q_F(\Delta) = \dim \overline{H}^q_{F, 2}(\Delta)$ for each $q$. 
	
	Fix $0 \le q \le d/2$, and let $S = \{m_1, \dotsc, m_p\}$ denote the set of monomials of degree $q$ in $K[\Delta]$. Let $M_0$ be the $p \times p$ matrix whose $(i, j)$th entry is $\deg_F(\ell^{d - 2q}_F  m_i  m_j)$. Then $\overline{H}_F(\Delta)$ has the strong Lefschetz property in degree $q$ if and only if the rank of $M_0$ is equal to $\dim \overline{H}^q_F(\Delta)$. 
	
	Let $M_2$ be the $p \times p$ matrix whose $(i, j)$th entry is $\deg_{F,2}(\ell^{d - 2q}_F  m_i  m_j)$, i.e., the degree in $\overline{H}_{F, 2}(\Delta)$. Because $\overline{H}_{F, 2}(\Delta)$ has the strong Lefschetz property by Theorem~\ref{thm:stronggpseudo}, the rank of $M_2$ is equal to $\dim \overline{H}^q_F(\Delta)$.

	By Lemma~\ref{lem:degmu},
	$\deg_F(\ell^{d - 2q}_F  m_i  m_j) = \ev_{\theta_F}(\deg(\ell^{d - 2q}  m_i  m_j))$ lies in the localization of $\Z[a_{i,j}]$ at the polynomials  $\{ \ev_{\theta_F}([G]) : G \textrm{ is a facet of } \Delta \}$. Note 
	 that $\deg_{F,2}(\ell^{d - 2q}_F  m_i  m_j)$ is obtained by reducing $\deg_F(\ell^{d - 2q}_F  m_i  m_j)$ modulo $2$. 
	Hence $\dim \overline{H}^q_F(\Delta) \ge \rank (M_0) \ge \rank (M_2) = \dim \overline{H}^q_{F,2}(\Delta)$. Since $\dim \overline{H}^q_F(\Delta) = \dim \overline{H}^q_{F, 2}(\Delta)$, 
	the rank of $M_0$ is equal to $\dim \overline{H}^q_F(\Delta)$, as desired. 
\end{proof}

\begin{remark}\label{rem:polytopal}
When $k$ has characteristic $0$ and $\Delta$ is a polytopal sphere, Theorem~\ref{thm:strongg} can be deduced from Stanley's proof of this case of the algebraic $g$-conjecture \cite{Stanleyg}. Indeed, we can assume $k = \mathbb{Q}$ and choose a realization of $\Delta$ as the boundary of a convex polytope $P$ in $\mathbb{R}^d$ whose vertices are rational. Because $F$ is a non-face, using an affine transformation, we may assume that the vertices of $F$ are contained in the hyperplane where the first coordinate of $\R^d$ vanishes and that the origin is in the interior of $P$.
Then the strong Lefschetz property for $H_F(\Delta) = \overline{H}_F(\Delta)$ follows from the Hard Lefschetz theorem applied to the projective toric variety corresponding to the fan over $P$. 
\end{remark}

\medskip

We now begin the proof of Theorem~\ref{thm:oddmultHodgeRiemann}. 
Let $\mu = (\mu_1,\ldots, \mu_d)$ be an l.s.o.p. for $K[\Delta]$.
Let $\ell_\mu$ denote the image of $\sum_{j = 1}^n x_j$ in $\overline{H}^1_\mu(\Delta)$. 
Recall that $R \subset K$ denotes the localization of $k[a_{i,j}]$ at the irreducible polynomials  $\{ [G] : G \textrm{ facet of } \Delta \}$, and $\ev_\mu \colon R \to K$ is the map defined by $\ev_{\mu}(a_{i,j}) = \mu_{i,j}$.

\begin{lemma}\label{lem:ordP}
Let $\Delta$ be a connected oriented simplicial $k$-homology manifold. 
	Let $\mu = (\mu_1,\ldots, \mu_d)$ be an l.s.o.p.
	and let $0 \le q \le d/2$.
	Suppose that 
	multiplication by $\ell^{d-2q}_\mu$ is an isomorphism from $\overline{H}^{q}_\mu(\Delta)$ to $\overline{H}^{d - q}_\mu(\Delta)$, i.e., $\ell_\mu$ is a strong Lefschetz element in degree $q$. 
	Let
	$P \in k[a_{i,j}]$ be an irreducible polynomial such that $\ev_\mu(P) = 0$. 
	Then there are monomials $y_1, \dotsc, y_p$ 
	whose images 
	form a basis 
	of  $\overline{H}^q(\Delta)$
	and $\ord_{P}(\det M) = 0$, where
	$M$ is the $p \times p$ matrix whose $(i, j)$ entry is
	$\deg(\ell^{d - 2q}  y_i  y_j)$. 	
	In particular,
	$\overline{H}(\Delta)$ has the strong Lefschetz property in degree $q$, and,
	 if $D_{q} \in K^{\times}/(K^{\times})^2$ is the determinant of the Hodge--Riemann form on $\overline{H}^q(\Delta)$, then
	$\ord_P(D_q) = 0 \in \Z/2\Z$. 
\end{lemma}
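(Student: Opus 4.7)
The plan is to pick degree $q$ monomials using the auxiliary l.s.o.p.\ $\mu$, exploit the strong Lefschetz property of $\overline{H}_\mu(\Delta)$ to produce a nondegenerate Hodge--Riemann matrix there, and then transfer non-vanishing of its determinant back to $\overline{H}(\Delta)$ via the specialization $\ev_\mu$.

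First, I would observe that since $\overline{H}_\mu(\Delta)$ is artinian Gorenstein with degree map $\deg_\mu$, the nondegeneracy of the Hodge--Riemann form $(y,z)\mapsto \deg_\mu(\ell_\mu^{d-2q}\cdot y\cdot z)$ on $\overline{H}^q_\mu(\Delta)$ is equivalent to multiplication by $\ell_\mu^{d-2q}$ being an isomorphism, which holds by hypothesis. Since $\overline{H}^q_\mu(\Delta)$ is spanned by images of degree $q$ monomials of $K[\Delta]$, I can choose monomials $y_1,\ldots,y_p$ such that $\{\varphi_\mu(y_i)\}$ is a basis; by Proposition~\ref{prop:samedim}, the integer $p$ is independent of the choice of l.s.o.p.\ and hence equals $\dim\overline{H}^q(\Delta)$. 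Let $M_\mu$ be the matrix of the Hodge--Riemann form in this basis, so $\det M_\mu\neq 0$. Applying Lemma~\ref{lem:degmu} to each degree $d$ polynomial $\ell^{d-2q}y_iy_j\in k[x_1,\ldots,x_n]_d$, the entries of $M$ lie in $R$ and satisfy $\ev_\mu(M_{ij})=(M_\mu)_{ij}$; hence $\det M\in R$ and $\ev_\mu(\det M)=\det M_\mu\neq 0$. Remark~\ref{r:ordernonfaces}, applied to the irreducible polynomial $P$ with $\ev_\mu(P)=0$, then yields $\ord_P(\det M)=0$.

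To complete the first claim I must check that $\{\varphi(y_i)\}$ is actually a basis of $\overline{H}^q(\Delta)$. Since $\det M\neq 0$, any linear dependence $\sum c_i\varphi(y_i)=0$ would force $\sum_i c_iM_{ij}=0$ for every $j$, a contradiction; so the $\varphi(y_i)$ are linearly independent, and as there are $p=\dim\overline{H}^q(\Delta)$ of them, they form a basis. The final assertion about $D_q$ is then formal: changing basis of $\overline{H}^q(\Delta)$ multiplies $\det M$ by a square in $K^\times$, so $D_q$ is the class of $\det M$ in $K^\times/(K^\times)^2$ and $\ord_P(D_q)\equiv\ord_P(\det M)\pmod{2}$, which vanishes. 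The only point requiring thought — though not really an obstacle — is that a monomial basis of $\overline{H}^q_\mu(\Delta)$ descends to a basis of the generic ring $\overline{H}^q(\Delta)$; this follows from the dimensional equality in Proposition~\ref{prop:samedim} combined with the non-vanishing of $\det M$.
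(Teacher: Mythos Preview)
Your argument is correct and follows essentially the same route as the paper's proof: choose a monomial basis of $\overline{H}^q_\mu(\Delta)$, use Lemma~\ref{lem:degmu} to identify $\ev_\mu(\det M)=\det M_\mu\neq 0$, apply Remark~\ref{r:ordernonfaces} to conclude $\ord_P(\det M)=0$, and then invoke Proposition~\ref{prop:samedim} to upgrade the linearly independent set $\{\varphi(y_i)\}$ to a basis of $\overline{H}^q(\Delta)$.
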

\begin{proof}
	Choose monomials $y_1, \dotsc, y_p$ in the degree $q$ part of $K[\Delta]$ such that their images form
	a basis for $\overline{H}^q_\mu(\Delta)$; this is possible because $\overline{H}^q_\mu(\Delta)$ is spanned by the images of monomials.
	
	Let $M$ be the $p \times p$ matrix whose $(i, j)$ entry is $\deg(\ell^{d - 2q} y_i  y_j)$.
	By Lemma~\ref{lem:degmu}, each entry of $M$ lies in $R$, so $\det M$ lies in $R$. By Remark~\ref{r:ordernonfaces}, if we can show that $\ev_{\mu}(\det M) \neq 0$, then 
	$\ord_{P}(\det M) = 0$.
	
	Let $M_\mu$ be the $p \times p$ matrix whose $(i, j)$ entry is 
		$\deg_\mu(\ell_\mu^{d - 2q}  y_i y_j)$. 
	By Lemma~\ref{lem:degmu}, the $(i, j)$ entry  of $M_\mu$ is
		 $\ev_{\mu}(\deg(\ell^{d - 2q}  y_i y_j))$,
	 so $\det M_\mu = \ev_{\mu}(\det M)$. 
	As $\ell_\mu$ is a strong Lefschetz element for $\overline{H}_\mu^q(\Delta)$, 
	$\det M_\mu \neq 0$, and we conclude that $\ord_{P}(\det M) = 0$.
	
	Finally, that $\det M$ is nonzero implies that
		the images of $y_1,\ldots,y_p$ are 
		 linearly independent in $\overline{H}^q(\Delta)$. As $\dim \overline{H}^q(\Delta) = \dim \overline{H}_\mu^q(\Delta)$ by Proposition~\ref{prop:samedim}, 
		 $\{ y_1,\ldots,y_p \}$ is a basis for $\overline{H}^q(\Delta)$, so $\det M$ computes the determinant of the Hodge--Riemann form on $\overline{H}^q(\Delta)$. 
		 	In particular, since this determinant is  nonzero, $\overline{H}(\Delta)$ has the strong Lefschetz property in degree $q$.
		 This completes the proof.
\end{proof}

\begin{proposition}\label{prop:mult0}
Let $\Delta$ be a connected oriented simplicial $k$-homology manifold. 
	Let $F$ be a subset of $V$ of size $d$ which is not a facet, and let $0 \le q \le d/2$. 
	Suppose that  $\overline{H}_F(\Delta)$ has the strong Lefschetz property in degree $q$. 
		Then $\overline{H}(\Delta)$ has the strong Lefschetz property in degree $q$.
	Let $D_{q} \in K^{\times}/(K^{\times})^2$ be the determinant of the Hodge--Riemann form on $\overline{H}^q(\Delta)$. Then $\ord_{[F]}(D_{q}) = 0$. 
\end{proposition}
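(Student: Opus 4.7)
The plan is to obtain this as a direct specialization of Lemma~\ref{lem:ordP} applied to the l.s.o.p.\ $\mu = \theta_F = (\theta_1^F, \theta_2, \dotsc, \theta_d)$ and the irreducible polynomial $P = [F]$. Recall that $\theta_F$ is an l.s.o.p.\ for $K[\Delta]$ by Proposition~\ref{prop:stanleycriterion}, that $[F]$ is irreducible by Lemma~\ref{lemma:irreducible}, and that (by hypothesis) $\overline{H}_F(\Delta)$ has the strong Lefschetz property in degree $q$. Via Lemma~\ref{lem:slp}, the last of these facts upgrades to the stronger statement that the specific element $\ell_F = \varphi_F(\sum_j x_j) \in \overline{H}^1_F(\Delta)$ is itself a strong Lefschetz element in degree $q$.

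First I would verify the one remaining hypothesis of Lemma~\ref{lem:ordP}, namely that $\ev_{\theta_F}([F]) = 0$. This is immediate from the construction of $\theta_F$: the first component $\theta_1^F = \sum_{j \notin F} a_{1,j} x_j$ was built by deleting exactly the coefficients $a_{1,j}$ for $j \in F$. Writing $F = \{j_1 < \dotsb < j_d\}$, the matrix whose determinant computes $\ev_{\theta_F}([F])$ has $(i, m)$-entry equal to the coefficient of $x_{j_m}$ in $\theta_i^F$ (with $\theta_i^F = \theta_i$ for $i > 1$). Its first row is identically zero, so the determinant vanishes.

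With all hypotheses in place, Lemma~\ref{lem:ordP}, applied to $\mu = \theta_F$ and $P = [F]$, produces monomials $y_1, \dotsc, y_p$ whose images form a basis of $\overline{H}^q(\Delta)$ and such that $\ord_{[F]}(\det M) = 0$, where $M$ is the Gram matrix of the Hodge--Riemann form in this basis. Since $D_q$ is represented by $\det M$ in $K^{\times}/(K^{\times})^2$, this yields $\ord_{[F]}(D_q) = 0$, as required.

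There is essentially no obstacle here: the proposition is a direct packaging of the preceding machinery. The only content is the identification of $\theta_F$ as the correct l.s.o.p.\ that simultaneously kills $[F]$ (so that Remark~\ref{r:ordernonfaces} is applicable through Lemma~\ref{lem:ordP}) and retains a strong Lefschetz element (so that the associated Gram determinant does not degenerate). Both features are built into the definition of $\theta_F$ together with the standing assumption of the proposition.
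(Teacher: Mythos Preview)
Your proof is correct and follows essentially the same approach as the paper: invoke Lemma~\ref{lem:slp} to upgrade the strong Lefschetz hypothesis to $\ell_F$ being a strong Lefschetz element, then apply Lemma~\ref{lem:ordP} with $\mu = \theta_F$ and $P = [F]$. You have simply spelled out more of the hypothesis checks (irreducibility of $[F]$, $\ev_{\theta_F}([F]) = 0$) that the paper leaves implicit.
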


\begin{proof}
	By Lemma~\ref{lem:slp}, $\ell_F$ is a strong Lefschetz element for $\overline{H}^q_F(\Delta)$. 
	The result now follows by applying Lemma~\ref{lem:ordP} with $\mu = \theta_F$ and $P = [F]$. 
\end{proof}

\begin{lemma}\label{lem:disjoint}
	
		Let $F$ be a subset of $V$ of size $d$.
	Then for each $q$, there is a basis for $\overline{H}^q(\Delta)$ consisting of the images of monomials in $K[\Delta]$ whose support is disjoint from $F$. 
\end{lemma}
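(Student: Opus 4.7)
The plan is to reduce an arbitrary monomial in $\overline{H}^q(\Delta)$ to a $K$-linear combination of monomials supported on $V \smallsetminus F$, by using the defining relations of $H(\Delta)$ to eliminate the variables indexed by $F$. Since $F$ is a facet, we have $[F] \neq 0$, so the $d \times d$ submatrix $(a_{i,j_m})_{1 \le i \le d,\, j_m \in F}$ is invertible over $K$. Writing
\[
\theta_i = \sum_{j_m \in F} a_{i,j_m} x_{j_m} + \sum_{v \notin F} a_{i,v} x_v = 0 \in H^1(\Delta),
\]
I would invert this matrix to solve for each $x_{j_m}$ ($j_m \in F$) as a $K$-linear combination of the $x_v$ with $v \notin F$, producing constants $c_{m,v} \in K$ with $x_{j_m} = \sum_{v \notin F} c_{m,v} x_v$ in $H^1(\Delta)$.

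Given a degree $q$ monomial, I would substitute this relation for every factor indexed by $F$ and expand. The result is a $K$-linear combination of degree $q$ monomials whose supports are contained in $V \smallsetminus F$; some of these may vanish in $K[\Delta]$ if their support is not a face of $\Delta$, but this only shortens the sum. Hence the images in $H^q(\Delta)$ of monomials with support disjoint from $F$ span $H^q(\Delta)$.

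Since $\overline{H}^q(\Delta)$ is a quotient of $H^q(\Delta)$ via $\varphi$, these same monomial images span $\overline{H}^q(\Delta)$, and any spanning set contains a basis, so we extract one. The argument is purely linear-algebraic and there is no real obstacle; the only thing to check is that the substitution process terminates, which is clear because we are working in the fixed finite-dimensional $K$-vector space $H^q(\Delta)$ and the substitution replaces factors indexed by $F$ by expressions involving only indices outside $F$.
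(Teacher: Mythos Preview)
Your proof is correct and follows essentially the same approach as the paper's: the paper simply says that ``using the l.s.o.p., one can write any monomial in $\overline{H}^1(\Delta)$ in terms of the monomials corresponding to vertices not in $F$,'' which is exactly your matrix-inversion step, and then notes that $\overline{H}(\Delta)$ is generated in degree $1$, which amounts to your substitution into higher-degree monomials. Your write-up is just more explicit about the linear algebra (inverting the $d\times d$ submatrix indexed by $F$, which is precisely the content of Lemma~\ref{lem:Gaussianelim}).
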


\begin{proof}
	By Lemma~\ref{lem:Gaussianelim}, 
	one can write any monomial in $\overline{H}^1(\Delta)$ in terms of the monomials corresponding to vertices not in $F$. As $\overline{H}(\Delta)$ is generated in degree $1$, this implies that each $\overline{H}^q(\Delta)$ is spanned by monomials whose support is disjoint from $F$. Some subset of these monomials forms a basis. 
\end{proof}

For a facet $F$ of $\Delta$, let $\Delta'$ be the simplicial complex obtained by doing a stellar subdivision in the interior of $F$, i.e., the vertex set of $\Delta'$ is $V \cup \{n+1\} = \{1, \dotsc, n+1\}$, and the facets of $\Delta'$ are the facets of $\Delta$ except for $F$, together with $(F \cup \{n+1\}) \smallsetminus \{j\}$ for each $j \in F$. Then $\Delta'$ is a connected oriented simplicial $k$-homology manifold 
with its orientation 
determined
by orienting the facets of $\Delta'$ which are also facets of $\Delta$ in the same way that they are oriented in $\Delta$.

By Proposition~\ref{prop:mult0},
if $F$ is a non-face of size $d$ and $\overline{H}_F(\Delta)$ has the strong Lefschetz property in degree $q$, then so does $\overline{H}(\Delta)$. If $\Delta$ has no non-faces of size $d$, then $\Delta$ must be isomorphic to $S^{d-1}$, and so Theorem~\ref{thm:oddmultHodgeRiemann} holds for $\Delta$ by Example~\ref{ex:boundarysimplex}. When proving Theorem~\ref{thm:oddmultHodgeRiemann}, we may therefore assume that $\overline{H}(\Delta)$ has the strong Lefschetz property in degree $q$. 

\begin{proof}[Proof of Theorem~\ref{thm:oddmultHodgeRiemann}]
	As Theorem~\ref{thm:i=0} implies Theorem~\ref{thm:oddmultHodgeRiemann} when $q = 0$, we may assume that $0 < q \le d/2$. 
	
Theorem~\ref{thm:strongg} and Proposition~\ref{prop:mult0} show that if $F$ is not a facet, then $\ord_{[F]}(D_{q}) = 0$. Suppose that $F$ is a facet of $\Delta$. By Lemma~\ref{lem:disjoint}, we may choose a collection of monomials $y_1, \dotsc, y_p \in K[\Delta]$ of degree $q$ whose support is disjoint from $F$ and such that their image in $\overline{H}^{q}(\Delta)$ 
is a basis. By the version of Lemma~\ref{lem:slp} for $\overline{H}(\Delta)$, $\ell$ is a strong Lefschetz element in degree $q$. 
	Let $M$ be the $p \times p$ matrix whose $(i, j)$th entry is $\deg(\ell^{d - 2q}  y_i  y_j)$, 
	so $M$ is nonsingular and the image of $\det M$ in $K^{\times}/(K^{\times})^2$ is $D_{q}$.
	
	Let $\Delta'$ be the simplicial complex obtained by doing a stellar subdivision in the interior of $F$, with orientation as described above and degree map $\deg'$. 
	We can identify $K[\Delta]/(x_F)$ with a subring of $K[\Delta']$, and hence consider the images $y_1',\ldots,y_p'$ of  $y_1,\ldots,y_p$ in $K[\Delta']$. Set $y_{p+1}' = x_{n+1}^{q}$. 
	
	We claim that the images of  $y_1', \dotsc, y_{p + 1}'$ span 
$\overline{H}^q(\Delta')$. Indeed, by Lemma~\ref{lem:disjoint}, 
$\overline{H}(\Delta')$ is spanned as a vector space
by monomials whose support is disjoint from $F$. The latter consists of monomials supported away from $F \cup \{ n + 1\}$, together with powers of $x_{n + 1}$. Let $z$ be a monomial in $K[\Delta]$ of degree $q$ whose support is disjoint from $F \cup \{ n + 1\}$. Then 
$z = \sum_{i = 1}^p \lambda_i y_i$ in $\overline{H}^q(\Delta)$
for some $\lambda_i \in K$. Consider $z' := z - \sum_{i = 1}^p \lambda_i y'_i$ in $\overline{H}^q(\Delta')$. 
By Lemma~\ref{lem:deglocal}, the product of 
$z'$ with any monomial of degree $d - q$ supported away from $F \cup \{ n + 1\}$ is zero. Also, $z' x_{n + 1}^{d - q} = 0$. We deduce that $z' = 0$, and the claim follows.

Let 
$\ell'$ be the image of $\sum_{j = 1}^{n + 1} x_j$ in $\overline{H}^1(\Delta')$,
and let $M'$ be the $(p+1) \times (p+1)$ matrix whose $(i, j)$th entry is 
$\deg'((\ell')^{d - 2q}  y_i'  y_j')$. 
For $j \le p$, since $q > 0$ we have $y_j'  y_{p+1}' = 0$ in $K[\Delta']$.  By Lemma~\ref{lem:deglocal}, $M'$ is a block diagonal matrix whose northwest $p \times p$ block is $M$. 
Observe that the closed star of the vertex $n + 1$ in $\Delta'$ is isomorphic to the closed star of the vertex $1$ in $S^{d - 1}$ in Example~\ref{ex:boundarysimplex}. Then Lemma~\ref{lem:deglocal} implies that after relabeling indices 
the  $(p+1, p+1)$ entry $M_{p+1,p+1}'$ of $M'$ is equal to the degree $\deg_{S^{d - 1}}$ of $\ell^{d - 2q}  x_{1}^{2q}$ in $S^{d - 1}$ (up to sign). 
By \eqref{eq:degx1dsphere}, $\ord_{[F]} (M_{p+1,p+1}') = \ord_{[\{ 2,\ldots,d + 1\}]} (\deg_{S^{d - 1}}(\ell^{d - 2q}  x_{1}^{2q})) = 2q - 1$. 
	
	We see that $M'$ is nonsingular, so 
	$\{y_1', \dotsc, y_{p+1}'\}$ 
	is a basis of $\overline{H}^{q}(\Delta')$. In particular, $\det M'$ computes the determinant of the Hodge--Riemann form. 
	
	As $F$ is not a facet of $\Delta'$, Theorem~\ref{thm:strongg} and Proposition~\ref{prop:mult0} give that $\ord_{[F]}(\det M')$ is even. 
	Since 
	$\ord_{[F]} (M_{p+1,p+1}') = 2q - 1$ is odd, we see that $\ord_{[F]}(\det M)$ is odd, as desired. 
\end{proof}

\medskip

\begin{proof}[Proof of Theorem~\ref{thm:middledegree}]
Note that for even $d$ and $q=d/2$, the above proof of Theorem~\ref{thm:oddmultHodgeRiemann} works over all characteristics because 
we are in the middle degree, and so the assumption that $\ell_F$ is a strong Lefschetz element holds vacuously. This case of
Theorem~\ref{thm:oddmultHodgeRiemann} implies Corollary~\ref{cor:ordervanishing}. 
Hence if $F$ is a facet of $\Delta$, then $\ord_{[F]}(D_{d/2}) = 1  \in \Z/2\Z$. 
Let $P \in k[a_{i,j}]$ be an irreducible polynomial that is not equal (up to multiplication by a scalar) to one of the polynomials
$\{ [F] : F \textrm{ facet of } \Delta \}$. Over $\overline{k}[a_{i,j}]$, we may factor $P = P_1^{m_1} \dotsb P_r^{m_r}$, where the $P_i$ are distinct irreducible polynomials over $\overline{k}$ and $m_i \in \Z_{>0}$. Note that none of the $P_i$ are scalar multiples of $[F]$.
We claim that there are monomials $y_1, \dotsc, y_p$ such that 
their images form a basis of 
$\overline{H}^{d/2}(\Delta)$ and $\ord_{P_1}(\det M) = 0$, where
$M$ is the $p \times p$ matrix whose $(i, j)$ entry is $\deg(y_i  y_j)$. 
This implies that $\ord_{P}(\det M) = 0$ and hence $\ord_P(D_{d/2}) = 0$.
We deduce that 
$D_{d/2} = \lambda \prod_{F \text{ facet of }\Delta} [F] \in K^{\times}/(K^{\times})^2$ for some $\lambda \in k^{\times}/(k^{\times})^2$, completing the proof.

It remains to verify the claim. 
Let $V(P_1)$ be the vanishing locus of $P_1$ inside $\mathbb{A}^{dn}_{\overline{k}}$, and let $(\mu_{i,j}) \in V(P_1)$ be a $\overline{k}$-point. 
Set $\mu_i = \sum_{j} \mu_{i,j} x_j$. First suppose that $\mu = (\mu_1, \dotsc, \mu_d)$ is an l.s.o.p. for $\overline{k}(a_{i,j})[\Delta]$.
Observe that $\ev_{\mu}(P_1) = 0$ since $(\mu_{i,j}) \in V(P_1)$. 
Then the claim follows from Lemma~\ref{lem:ordP}. 
Note that the assumption in Lemma~\ref{lem:ordP} that $\ell_\mu$ is a strong Lefschetz element holds vacuously since we are in middle degree. 
Hence we may assume that
$\mu$ is not an l.s.o.p. 
By Proposition~\ref{prop:stanleycriterion} there must be some facet $F$ of $\Delta$ such that $(\mu_{i,j})$ is contained in the vanishing locus of $[F]$. Applying this to every $\overline{k}$-point of $V(P_1)$, we see that 
$$V(P_1) \subset \bigcup_{F \text{ facet of }\Delta} V([F]).$$
As there are only finitely many facets, this implies that $V(P_1)$ is contained in $V([F])$ for some facet $F$. The irreducibility of $[F]$ then implies that $P_1$ and $[F]$ are equal up to multiplication by a scalar, a contradiction. 
\end{proof}

\section{Further discussion}\label{sec:discussion}

We conjecture an extension of Theorem~\ref{thm:oddmultHodgeRiemann} to pseudomanifolds in arbitrary characteristic. 

\begin{conjecture}\label{conj:pseudoextension}
	 Let $k$ be a field of arbitrary characteristic.
Let $\Delta$ be a connected oriented simplicial pseudomanifold over $k$ of dimension $d-1$ with vertex set $V$. For all $0 \le q \le d/2$, let $D_q$ be the determinant of the Hodge--Riemann form on $\overline{H}^q(\Delta)$. 	Let $F$ be a subset of $V$ of size $d$. Then
	$$\ord_{[F]}(D_{q}) = \begin{cases} 1 & \text{if }F \text{ is a facet of }\Delta \\ 0 & \text{otherwise}.\end{cases}$$
\end{conjecture}

When $\Delta$ is a $k$-homology manifold, the proof of Theorem~\ref{thm:oddmultHodgeRiemann} shows that if $\overline{H}_F(\Delta')$ has the strong Lefschetz property whenever $\Delta' = \Delta$ or $\Delta'$ is the stellar subdivision of $\Delta$ at the interior of a facet and $F$ is a non-face, then Conjecture~\ref{conj:pseudoextension} holds for $\Delta$. 

However, an additional ingredient is needed to establish Conjecture~\ref{conj:pseudoextension} for pseudomanifolds. A key property of homology manifolds which was used in the proofs of our results, e.g., Lemma~\ref{lem:ordP}, was that the dimension of $\overline{H}^q_{\mu}(\Delta)$ does not depend on $\mu$, the chosen l.s.o.p. (see Proposition~\ref{prop:samedim}). We show in the example below that this independence of the dimension can fail for pseudomanifolds.

\begin{example}\label{ex:hilbertdependence}
Let $\Delta$ be the standard $6$ vertex triangulation of $\mathbb{R} \mathbb{P}^2$, and let $\Delta' = \Delta * S^0$ be the suspension. Over a field of characteristic $2$, $\Delta'$ is an oriented pseudomanifold, but it is not a homology manifold. Using Macaulay2 \cite{M2}, we checked that, if one chooses an l.s.o.p. $\mu_1, \mu_2, \mu_3, \mu_4$ with all coefficients random elements of the field with $1024$ elements, the Hilbert function of $H_{\mu}(\Delta')$ is usually given by $(1, 4, 9, 6, 1)$, and the Hilbert function of $\overline{H}_{\mu}(\Delta')$ is usually given by $(1, 4, 8, 4, 1)$. If one chooses $\mu_1', \mu_2', \mu_3'$ to be generic linear combinations of the vertices of $\mathbb{R} \mathbb{P}^2$ and chooses $\mu_4'$ to be a generic linear combination of the vertices of $S^0$, then $H_{\mu'}(\Delta') = H_{(\mu_1', \mu_2', \mu_3')}(\Delta) \otimes H_{(\mu_4')}(S^0)$, and similarly for $\overline{H}_{\mu'}(\Delta')$. We can then use Proposition~\ref{prop:samedim} to compute that the Hilbert function of $H_{\mu'}(\Delta')$ is given by $(1, 4, 9, 7, 1)$, and the Hilbert function of $\overline{H}_{\mu'}(\Delta')$ is given by $(1, 4, 6, 4, 1)$. 
\end{example}

It follows from Theorem~\ref{thm:stronggpseudo} that the equality $\dim \overline{H}^q(\Delta) = \dim \overline{H}_F^q(\Delta)$ 
holding for all $\Delta$ would imply Conjecture~\ref{conj:pseudoextension} in the characteristic $2$ case. We now show that this equality holds when $q=1$ (in any characteristic).

\begin{proposition} \label{prop:degree1}
	 Let $k$ be a field of arbitrary characteristic.
Let $\Delta$ be a connected 
	oriented 
	simplicial pseudomanifold over $k$ of dimension $d-1$.
	Let $\mu$ be any l.s.o.p.~ for $K[\Delta]$. Then $\overline{H}^1_\mu(\Delta)=H^1_\mu(\Delta)$. In particular, $\dim\overline{H}^1_\mu(\Delta)=n-d$ independently of $\mu$. 
\end{proposition}

\begin{proof}
	Let $z=\sum_{i=1}^n \lambda_i x_i\in H^1_\mu(\Delta)$ be such that $zx_G=0$ for all codimension $1$ faces $G$ of $\Delta$. To prove the statement, we need to show that $z=0$.
	
	Let $F'$ be any facet of $\Delta$. Since $\mu$ is an l.s.o.p and $F'$ is a facet, by Proposition~\ref{prop:stanleycriterion}, 
	$\ev_\mu([F']) \neq 0$. It follows from Lemma~\ref{lem:Gaussianelim} 
	that $\{x_v : v\notin F'\}$ is a basis of $H^1_\mu(\Delta)$. By using this basis, we can assume $\lambda_i = 0$ for $i \in F'$.

	Consider any codimension $1$ face $G\subset F'$. Since $\Delta$ is a pseudomanifold, there is a unique facet $F''$ of $\Delta$ such that $F'\cap F''=G$; we let $u$ denote the unique vertex of $F''\smallsetminus G$. Then $0=zx_G=\lambda_u x_{F''}$. As $x_{F''}$ is nonzero because it has nonzero degree, it follows that $\lambda_u=0$. We conclude that $\lambda_v=0$ for all $v\in F''$. Continuing in this way, for any sequence of facets $F' = F_0, \ldots, F_r$ of $\Delta$ such that $F_i \cap F_{i + 1}$ is a face of codimension $1$ for $0 \le i < r$, we deduce that $\lambda_v=0$ for all $v \in F_r$. Since $\Delta$ is a connected pseudomanifold, every facet of $\Delta$ appears as $F_r$ for some such sequence, and hence $\lambda_v=0$ for all $v\in V$. Therefore $z=0$, as desired. 
\end{proof}

\begin{remark}\label{rem:q=1}
Proposition~\ref{prop:degree1} and Theorem~\ref{thm:stronggpseudo} imply that Conjecture~\ref{conj:pseudoextension} holds when $k$ has characteristic $2$ and $q=1$. By a specialization argument similar to the proof of Theorem~\ref{thm:strongg}, we see that Conjecture~\ref{conj:pseudoextension} also holds when $k$ has characteristic $0$ and $q=1$.
\end{remark}

\medskip

Assume that $\ell$ is a strong Lefschetz element in all degrees, i.e.,
the Hodge--Riemann form on $\overline{H}^q(\Delta)$ is nondegenerate for $0 \le q \le d/2$. The \emph{primitive part} of $\overline{H}^q(\Delta)$ is 
$\overline{H}^q_{\prim}(\Delta) := \{ y \in \overline{H}^q(\Delta) : \ell^{d - 2q + 1}  y = 0 \}$. 
Let $D_{\prim,q} \in K^{\times}/(K^{\times})^2$ be the determinant of the induced Hodge--Riemann form on $\overline{H}^q_{\prim}(\Delta)$.
For $0 < q \le d/2$, multiplication by $\ell$ induces an injection $\overline{H}^{q - 1}(\Delta) \to \overline{H}^q(\Delta)$ which splits to give an isomorphism 
$\overline{H}^q(\Delta) \cong \overline{H}^{q - 1}(\Delta) \oplus \overline{H}^q_{\prim}(\Delta)$. As this decomposition is orthogonal with respect to the Hodge--Riemann form, we have $D_q = D_{q - 1}D_{\prim,q}$. In particular, $D_q = D_0 \prod_{q' = 1}^q D_{\prim,q'}$. Since we established Conjecture~\ref{conj:pseudoextension} when $q = 0$ in Theorem~\ref{thm:i=0}, we conclude that 
Conjecture~\ref{conj:pseudoextension} holding for all $q$  is equivalent to the following conjecture.

\begin{conjecture}\label{conj:oddmultv2}
	 Let $k$ be a field of arbitrary characteristic.
	Let $\Delta$ be a connected oriented simplicial pseudomanifold over $k$ of dimension $d-1$ with vertex set $V$. 
	Then $\ell$ is a strong Lefschetz element in all degrees, and, for each subset $F$ of $V$ of size $d$ and 
	$0 < q \le d/2$, we have 
	$\ord_{[F]}(D_{\prim,q}) = 0$. 
\end{conjecture}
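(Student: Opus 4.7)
The plan is to exploit the equivalence, essentially proven in the paragraph immediately before the statement, between Conjecture~\ref{conj:oddmultv2} and Conjecture~\ref{conj:oddmult}, and then attack the latter via Theorem~\ref{thm:equiv}. First I would observe that once $\ell$ is a strong Lefschetz element, the primitive decomposition $\overline{H}^q(\Delta) \cong \overline{H}^{q-1}(\Delta) \oplus \overline{H}^q_{\prim}(\Delta)$ is orthogonal with respect to the Hodge--Riemann form, giving $D_q = D_0 \prod_{q' = 1}^q D_{\prim, q'}$. Since Theorem~\ref{thm:i=0} already computes $\ord_{[F]}(D_0)$, proving $\ord_{[F]}(D_{\prim, q}) = 0$ for all $0 < q \le d/2$ is equivalent to showing that $\ord_{[F]}(D_q)$ is independent of $q$ modulo $2$, which is exactly what Conjecture~\ref{conj:oddmult} predicts since the right-hand side of that formula is the same in every degree.

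Next I would invoke Theorem~\ref{thm:equiv} to reduce the remaining task to Conjecture~\ref{conj:strongg}: for each non-face $F$ of size $d$, the less-generic artinian reduction $\overline{H}_F(\Delta)$ has the strong Lefschetz property in degree $q$, and the analogous statement holds for every stellar subdivision of $\Delta$ in the interior of a facet. The strong Lefschetz assertion of Conjecture~\ref{conj:oddmultv2} itself, which is just the algebraic $g$-conjecture for $\overline{H}(\Delta)$, has been announced in \cite{APP} for connected oriented simplicial $k$-homology manifolds, so the residual work is genuinely the non-generic case.

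The natural strategy for Conjecture~\ref{conj:strongg} is to adapt one of the existing proofs of the algebraic $g$-conjecture to this slightly less-generic l.s.o.p. In the polytopal case, Proposition~\ref{prop:polytopalg} already does this via a toric realization placing a rational point of the convex hull of $F$ at the origin; for general $\Delta$ one could try to follow the Karu--Xiao anisotropy strategy, noting that Proposition~\ref{prop:karuxiaodegree} and Corollary~\ref{cor:KaruXiaoformu} apply verbatim for $\theta_F$, so the signs and denominators controlling the Hodge--Riemann signature remain tractable. A complementary approach would be to deform $\theta_F$ to a generic l.s.o.p.\ through a one-parameter family, using Stanley's criterion (Proposition~\ref{prop:stanleycriterion}) to keep the parameter condition, and combine this with the Zariski openness of the strong Lefschetz locus together with an inductive argument on the link of a face not in $F$.

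The hard part is exactly Conjecture~\ref{conj:strongg}. Strong Lefschetz is a Zariski-open condition and $\theta_F$ is a specific non-generic point, so openness alone cannot close the argument; one must engage with the combinatorial structure of $\Delta$ near $F$. The most plausible route is an induction on $d$, where the inductive hypothesis is applied to the links of vertices in $F$, and one leverages the fact that the degenerate l.s.o.p.\ $\theta_F$ still restricts nicely to $\Star_\Delta(G)$ for faces $G$ containing a vertex outside $F$; the local-to-global formalism of Lemma~\ref{lem:deglocal} is the mechanism by which link information is transferred to the global Hodge--Riemann form, and making this bookkeeping compatible with the less-generic l.s.o.p.\ is where the real work lies.
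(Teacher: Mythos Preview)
The statement you are attempting to prove is a \emph{conjecture}, and the paper does not provide a proof of it. The paragraph preceding Conjecture~\ref{conj:oddmultv2} establishes only that it is \emph{equivalent} to Conjecture~\ref{conj:oddmult} holding in all degrees; neither conjecture is proved in general. Your proposal correctly recapitulates this equivalence and then invokes Theorem~\ref{thm:equiv} to reduce to Conjecture~\ref{conj:strongg}, but Conjecture~\ref{conj:strongg} is itself open in the paper. You acknowledge this yourself (``The hard part is exactly Conjecture~\ref{conj:strongg}''), and from that point on your proposal offers only heuristic strategies, not an argument. So there is no proof here to compare against the paper's, because the paper has no proof either, and your proposal does not supply one.

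One of your proposed attack routes is explicitly ruled out by the paper: you suggest adapting the Karu--Xiao anisotropy strategy to $\overline{H}_F(\Delta)$, but Example~\ref{ex:anisotropyfails} shows that anisotropy \emph{fails} for $\overline{H}_F(\Delta)$ already when $\Delta = \Sigma$ and $d=2$, so this technique cannot be used directly. Your deformation and link-induction sketches are too vague to evaluate, and you yourself note that Zariski openness of the strong Lefschetz locus cannot close the gap at the specific non-generic point $\theta_F$. In short, your reduction steps mirror exactly what the paper already records (the equivalence plus Theorem~\ref{thm:equiv}), and the residual problem you identify is precisely the open Conjecture~\ref{conj:strongg}; nothing in your proposal advances beyond the paper's stated position.
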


\begin{remark}
		Let $\Delta$ be a connected simplicial 
	pseudomanifold  
	of dimension $d-1$, and let $0 \le q \le d/2$. 
	Assume that $\Char k = 2$. By Theorem~\ref{thm:stronggpseudo},
	for every non-face $F$ of size $d$, $\overline{H}_F(\Delta)$ has the strong Lefschetz property in degree $q$. As above, the primitive part of $\overline{H}^q_F(\Delta)$ is 
	$\overline{H}^q_{F,\prim}(\Delta) := \{ y \in \overline{H}^q_F(\Delta) : \ell_F^{d - 2q + 1}  y = 0 \}$, and
	we have a decomposition $\overline{H}^q_F(\Delta) \cong \overline{H}^{q - 1}_F(\Delta) \oplus \overline{H}^q_{F,\prim}(\Delta)$. We claim that the restriction of the Hodge-Riemann form to $\overline{H}^q_{F,\prim}(\Delta)$ is anisotropic. 
	
	To prove the claim, we may assume that $q > 0$ and that the Hodge-Riemann form on $\overline{H}^q_F(\Delta)$ is not anisotropic. Recall that $W_q \subset \overline{H}_F^q(\Delta)$ is the span  of all monomials whose support is not contained in $F$.
	By Lemma~\ref{lem:codim1} and Corollary~\ref{cor:anisocriterion}, $W_q^\perp$ is $1$-dimensional, and $\ell_F^q$ is not in 
	$W_q = (W_q^\perp)^\perp$. That is, if
	$W_q^{\perp} = \spn(z)$, then $\deg_F(\ell_F^{d - 2q} z \ell_F^q)$ is nonzero. This implies that $\ell_F^{d - 2q + 1} z$ is nonzero and hence $z$ is not primitive. The claim now follows from Proposition~\ref{prop:almostanisotropy}.

\end{remark}

	\bibliography{Merged.bib}
	\bibliographystyle{amsalpha}

	\end{document}